\numberwithin{equation}{section}
\theoremstyle{definition}
\newtheorem{definition}{Definition}[section]
\newtheorem{example}[definition]{Example}
\newtheorem{remark}[definition]{Remark}
\theoremstyle{theorem}
\newtheorem{proposition}[definition]{Proposition}
\newtheorem{theorem}[definition]{Theorem}
\newtheorem{corollary}[definition]{Corollary}
\newtheorem{lemma}[definition]{Lemma}
\newcommand{\norm}[1]{\left\lVert#1\right\rVert}
\newcommand{\ddb}{\partial\bar{\partial}}
\title{On Yau's Theorem for Asymptotically Conical Orbifolds}
\author{Mitchell Faulk}
\address{Department of Mathematics, Columbia University, New York, NY}
\email{faulk@math.columbia.edu}
\begin{document}

\maketitle

\begin{abstract}
A notion of asymptotically conical K\"ahler orbifold is introduced, and, following existence results of \cite{ch13} in the setting of asymptotically conical manifolds, it is shown that a certain complex Monge-Amp\'ere equation admits a rapidly decaying solution (which is unique for certain intervals of decay rates), allowing one to construct K\"ahler metrics with prescribed Ricci forms. In particular, if the orbifold has trivial canonical bundle, then Ricci-flat metrics can be constructed, provided certain additional hypotheses are met. This implies for example that orbifold crepant partial resolutions of varieties associated to Calabi-Yau cones admit a one-parameter family of Calabi-Yau metrics in each K\"ahler class that contains positive $(1,1)$-forms. 
\end{abstract}

\tableofcontents

\section{Introduction}

The problem of finding canonical K\"ahler metrics on K\"ahler manifolds has been a fruitful direction of research in complex geometry. It is natural to consider the same problem for orbifolds, not just for the extra degree of generality, but also because the study of such problems on orbifolds can be helpful to the same problem on manifolds (see for example \cite{gk} and the gluing construction of \cite{rs}). Moreover, in the program of mirror symmetry, it is already well-known that orbifolds can arise as the mirrors of manifolds, and hence the study of their complex geometry merits at least as much concern as that of manifolds.   

This paper concerns itself with the problem of finding canonical metrics on non-compact K\"ahler orbifolds $(\mathcal{X}, \omega)$ that are ``asymptotically conical'' in the sense that away from some compact set the orbifold is isomorphic to a K\"ahler cone in such a way that the difference between the two K\"ahler structures decays rapidly. In particular, just as in the setting of compact setting \cite{faulk}, one can search in a fixed K\"ahler class for a K\"ahler metric on $\mathcal{X}$ with a prescribed Ricci form, and it can be shown that the existence of such a metric is implied by the existence of a smooth solution $\varphi$ to the following complex Monge-Amp\'ere equation 
\begin{align}\label{eqn:MA}
\begin{cases}
(\omega + \sqrt{-1} \ddb \varphi)^n = e^F \omega^n \\
\omega + \sqrt{-1} \ddb \varphi \; \text{is a positive form}
\end{cases}
\end{align}
where, here, $n$ is the complex dimension of $\mathcal{X}$ and $F$ is a smooth fixed function on $\mathcal{X}$, satisfying some possibly non-trivial conditions in order for the equation to admit any solutions at all. In the compact setting, integrating this equation over the orbifold shows that $F$ must have average value zero. In the asymptotically conical case, however, it is natural in addition to require $F$ to satisfy some decay condition (so that one can integrate the equation over both sides). In particular, we could ask that $F$ decay at least as fast as $\rho^\beta$ for some weight $\beta < 0$, where $\rho$ is a radius function defined on the underlying asymptotically conical orbifold. Let us write $C_\beta^\infty(\mathcal{X})$ for the space of smooth functions on $\mathcal{X}$ decaying at least as fast as $\rho^\beta$ (together with their derivatives). (See Definition \ref{defn:AC} and the discussion that follows for a precise definition of $C_\beta^\infty(\mathcal{X})$.) Then with this convention, whenever $F$ belongs to $C_\beta^\infty(\mathcal{X})$, we would expect to seek solutions $\varphi$ belonging to $C_{\beta + 2}^\infty(\mathcal{X})$ (heuristically because taking two derivatives amounts to increasing the decay rate by an order of exactly two). 

The existence of solutions to the complex Monge-Amp\'ere equation \eqref{eqn:MA} depends on its linearization, which, with the requirement that $F$ belong to $C_\beta^\infty(\mathcal{X})$, becomes an equation involving the elliptic differential operator given by the Laplacian
\begin{align}\label{eqn:lap}
\Delta : C_{\beta + 2}^\infty(\mathcal{X}) \to C_{\beta}^\infty(\mathcal{X})
\end{align}
corresponding to some K\"ahler metric. Now, in the compact setting, the Fredholm index of the Laplacian is always zero (because the kernel and cokernel both consist of the one-dimensional subspace of constant functions). However, in this asymoptically conical setting, the Fredholm index of the Laplacian can vary---and may not even exist!---depending on the decay rate $\beta$ of the space of functions on which we consider the Laplacian to be defined. In particular, the operator \eqref{eqn:lap} is Fredholm for almost all weights $\beta$, except a discrete set $\mathcal{P}$ of ``exceptional weights'' (which depends only on the cone to which $\mathcal{X}$ is asymptotic).  Moreover, for an interval of weights not containing an exceptional one, the corresponding family of operators has the same Fredholm index. Concisely, the Fredholm index of \eqref{eqn:lap} may be considered as a monotonic function defined for almost all weights $\beta$, with only jump discontinuities at the exceptional weights (where the Fredholm index is not defined). 

The previous paragraph suggests that the existence and uniqueness of solutions to \eqref{eqn:MA} depend on the decay rate $\beta$, and this is indeed true.  In particular, there is an interval of weights where the Laplacian \eqref{eqn:lap} has Fredholm index zero, and for such a weight, it is known that the Monge-Amp\'ere equation \eqref{eqn:MA} can be solved uniquely. When the Fredholm index of the Laplacian is positive, we can also solve \eqref{eqn:MA}, but, due to the existence of a non-trivial kernel, we can no longer guarantee uniqueness. Finally, when the index is the first negative value, we cannot solve \eqref{eqn:MA} with the given space of functions, and must instead consider the space of functions $\mathbb{R}\rho^{2 - 2n} \oplus C_{\beta + 2}^\infty(\mathcal{X})$ to obtain a solution. Concisely, the results are summarized in the following. 

\begin{theorem}\label{thm:main}
Let $(\mathcal{X}, g)$ be a K\"ahler orbifold of complex dimension $n$ which is asymptotically conical of order $\lambda_{g}$ for some weight $\lambda_g<0$ (see Definition \ref{defn:AC} (i)). Let $\mathcal{P}$ denote the exceptional weights of the Laplacian (see Theorem \ref{thm:fredholm}), and let $\rho$ be a radius function on $\mathcal{X}$ (see Definition \ref{defn:AC} (ii)). Given a smooth function $F \in C_\beta^\infty(\mathcal{X})$, consider the complex Monge-Amp\'ere equation \eqref{eqn:MA} for a smooth function $\varphi$ on $\mathcal{X}$. 
\begin{enumerate}
\item[(i)] If $\beta \in (\max\{-4n, \beta_1^-, \lambda_g - 2n\}, -2n)$ where $\beta_1^-$ is the exceptional weight corresponding smallest nonzero eigenvalue of the Laplacian of $\Sigma$ (see \eqref{eqn:beta}),  then there is a unique solution $\varphi \in \mathbb{R} \rho^{2 - 2n} \oplus C^\infty_{\beta + 2}(\mathcal{X})$. 
\item[(ii)] If $\beta \in (-2n, -2)$, then there is a unique solution $\varphi \in C^\infty_{\beta + 2}(\mathcal{X})$. 
\item[(iii)] If $\beta \in (-2,0)$ and $\beta + 2 \notin \mathcal{P}$, then there is a solution $\varphi \in C^\infty_{\beta + 2}(\mathcal{X})$. 
\end{enumerate}
\end{theorem}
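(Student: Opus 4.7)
The plan is to prove all three parts by Yau's continuity method adapted to the weighted function spaces constructed in \cite{ch13}. For $t \in [0,1]$ consider the family of equations $(\omega + \sqrt{-1}\ddb \varphi_t)^n = e^{tF} \omega^n$ with $\varphi_t$ in the appropriate weighted H\"older space $C^{k,\alpha}_{\beta+2}(\mathcal{X})$ (or, in case (i), in $\mathbb{R}\rho^{2-2n} \oplus C^{k,\alpha}_{\beta+2}(\mathcal{X})$), and let $S \subseteq [0,1]$ be the set of $t$ for which a solution of the prescribed form exists. Since $0 \in S$ via $\varphi_0 = 0$, it suffices to show that $S$ is both open and closed in $[0,1]$, which yields smoothness via standard bootstrapping.

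Openness at $t_0 \in S$ is a direct application of the implicit function theorem on Banach spaces. The linearization of the Monge-Amp\`ere operator at $\varphi_{t_0}$ equals, up to multiplication by a positive smooth function, the Laplacian $\Delta_{\omega_{t_0}}$ of the perturbed metric $\omega_{t_0} = \omega + \sqrt{-1}\ddb \varphi_{t_0}$. In case (ii), where $\beta + 2 \in (2-2n, 0)$, Theorem \ref{thm:fredholm} gives that $\Delta$ has Fredholm index zero with trivial kernel on this weighted space, hence is an isomorphism, and openness is immediate. In case (iii), where $\beta + 2 \in (0,2)$ avoids $\mathcal{P}$, the Laplacian has positive index and is surjective, so openness holds but uniqueness must fail. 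In case (i), the Laplacian on $C^\infty_{\beta+2}$ has nontrivial cokernel; the point of enlarging the ansatz by $\mathbb{R}\rho^{2-2n}$ is that $\rho^{2-2n}$ is, modulo faster-decaying terms, the Green's function model on the cone, so its image under $\Delta$ pairs nontrivially with the missing cokernel and the augmented linearization becomes surjective.

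Closedness is the analytic heart of the argument and requires uniform a priori estimates on $\|\varphi_t\|_{C^{k,\alpha}_{\beta+2}}$ for $t \in S$ depending only on $\|F\|_{C^{k,\alpha}_\beta}$. I would follow the Yau-style hierarchy in weighted form: a weighted $C^0$ estimate obtained by a Moser iteration adapted to the AC geometry (as carried out in \cite{ch13} following Hein), then Yau's $C^2$ estimate, Calabi's $C^3$ estimate, and finally Schauder and Evans-Krylov to bootstrap to $C^{k,\alpha}$. At each step one must control not only the pointwise size but also the decay rate $\rho^{\beta+2}$; in case (i) one must additionally isolate the coefficient of $\rho^{2-2n}$ and estimate the remainder in $C^{k,\alpha}_{\beta+2}$ separately, taking care that the decomposition $\mathbb{R}\rho^{2-2n} \oplus C^{k,\alpha}_{\beta+2}$ is closed in a suitable ambient norm.

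Uniqueness in (i) and (ii) follows by subtracting two solutions $\varphi_1, \varphi_2$: the difference satisfies a linear second-order elliptic equation whose leading symbol is a convex combination of the Laplacians of $\omega_{\varphi_1}$ and $\omega_{\varphi_2}$, and the weighted Fredholm theory (together with the fact that $\beta+2$ lies in the index-zero range, or in the augmented-index-zero range for case (i)) forces $\varphi_1 = \varphi_2$. The main obstacle, as in the manifold setting, will be the weighted $C^0$ estimate: establishing that $\rho^{-(\beta+2)} \varphi_t$ is uniformly bounded along the continuity path requires combining the maximum principle with precise knowledge of the decay of $F$ and carefully chosen barrier functions built from powers of $\rho$; this is delicate in case (i), where the optimal decay is saturated by the $\rho^{2-2n}$ contribution itself.
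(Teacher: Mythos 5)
Your proposal for case (ii) matches the paper's approach: a continuity method with openness from the implicit function theorem (using the weighted Fredholm isomorphism for $-2n<\beta<-2$) and closedness from a Yau-style hierarchy of weighted a priori estimates adapted from Joyce's ALE arguments, followed by bootstrapping. For cases (i) and (iii), however, the paper takes a genuinely different route: it does \emph{not} run the continuity method directly in the augmented or slowly decaying spaces. Instead, case (i) is handled by including $C^\infty_\beta \hookrightarrow C^\infty_{-2n+\epsilon}$, invoking case (ii) to get a solution $\varphi \in C^\infty_{2-2n+\epsilon}$, and then \emph{a posteriori} improving the decay: expanding $(\omega+\sqrt{-1}\ddb\varphi)^n = e^F\omega^n$ shows $\Delta\varphi = O(\rho^\beta)$, and a separate analysis of the Dirichlet problem for the Laplacian (via an energy-minimization argument in a weighted Sobolev space, together with the explicit computation $\int_{\mathcal X}\Delta(\rho^{2-2n})\,dV=(2n-2)\mathrm{Vol}(\Sigma)$) yields the decomposition $\varphi\in\mathbb{R}\rho^{2-2n}\oplus C^\infty_{\beta+2}$. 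Case (iii) is reduced to case (ii) by an iterative preconditioning of the right-hand side: one solves a linear problem $\Delta\hat\varphi_1=F$, cuts off $\hat\varphi_1$ to obtain $\varphi_1\in C^\infty_{\beta+2}$ satisfying a Monge-Amp\`ere equation with a new, faster-decaying error $F_1\in C^\infty_{2\beta}$, and repeats until $2^k\beta<-2$, at which point case (ii) applies.

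For case (iii), your direct continuity method would actually break down, and this is more than a matter of taste. When $\beta+2\in(0,2)$, functions in $C^\infty_{\beta+2}$ are allowed to \emph{grow} like $\rho^{\beta+2}$. The weighted $C^0$ estimate that anchors the a priori chain (Lemma \ref{lem:est1} and its descendants) relies on integration by parts over $T_R=\rho^{-1}([1,R])$ and requires the boundary term $O(R^{p\gamma+2n-2})$ to vanish as $R\to\infty$; this demands $p\gamma<2-2n$ for some admissible $p>1$, which is impossible when $\gamma>0$ since $2n-2>0$. Indeed, the paper explicitly restricts all of Section \ref{sec:apriori} to $-2n<\beta<-2$. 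Without a weighted $C^0$ estimate there is no closedness, so your argument for case (iii) has a genuine gap. For case (i), running the continuity method directly in $\mathbb{R}\rho^{2-2n}\oplus C^{k,\alpha}_{\beta+2}$ is plausible in principle but requires separately controlling the coefficient of $\rho^{2-2n}$ along the path, which you flag but do not resolve; the paper sidesteps this entirely by obtaining the solution first with suboptimal decay, then recovering the coefficient $A$ as an explicit integral $\frac{1}{(2n-2)\mathrm{Vol}(\Sigma)}\int_{\mathcal X}\Delta\varphi\,dV$. Your claim about surjectivity of the augmented linearization is the right intuition, but the quantitative control you would need to close the continuity argument is not supplied, while the paper's route avoids needing it.
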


This result should be compared with the main existence result (Theorem 2.1) of \cite{ch13}, which is almost precisely the same statement in the setting of asymptotically conical \emph{manifolds} and which improves upon previous results \cite{goto, joyce, tianyau, bando, coevering2010, coeveringarxiv}, partly by unifying such results into a single framework. Because our notion of asymptotically conical orbifold requires all of the orbifold points to be contained in some compact set, proofs from the manifold setting (hence from \cite{ch13}) can go through mutatis mutandis. A small difference in our statement of Theorem \ref{thm:main} is the precision offered in the statement of the interval for case (i), which depends on a study of the existence of solutions to the Dirichlet problem for the Laplacian on asymptotically conical orbifolds (see Section \ref{sec:cases}), which should be compared to the case of asymptotically locally Euclidean (ALE) manifolds as in \cite{joyce}. Another difference from \cite{ch13} is a detailed outline of a proof of case (ii) using a continuity method involving estimates on weighted H\"older spaces, which is a streamlined modification of the approach of \cite{joyce} (which deals with the ALE case). 

Now let us suppose that the canonical bundle of the asymptotically conical K\"ahler orbifold $\mathcal{X}$ is trivial. Then since the zero form represents the first Chern class of $\mathcal{X}$, one would expect to be able to find metrics with vanishing Ricci curvature from Theorem \ref{thm:main}. However, in general, one would not expect every K\"ahler class to admit such a metric, instead, only those satisfying some sort of decay condition. Certainly compactly supported K\"ahler classes should admit Ricci-flat metrics (with the idea being that these are the classes which decay the fastest), but due to a version of the $\ddb$-lemma that holds outside of a compact set (see Lemma \ref{lem:ddbar}), one might also expect to find a Ricci-flat metric in K\"ahler classes that are almost compactly supported of some decay rate, in the sense that the class can be represented by a form which differs from an exact form by a form that decays with some weight (see Definition \ref{defn:mu}). More precisely, we have the following theorem concerning the existence of Calabi-Yau metrics (which is the orbifold analogue of  \cite[Theorem 2.4]{ch13}.)

\begin{theorem}\label{thm:calabiyau}
Let $\mathcal{X}$ be a complex orbifold of complex dimension $n > 2$ with trivial canonical bundle. Let $\Omega$ be a nowhere vanishing holomorphic volume form on $\mathcal{X}$. Let $\Sigma$ be Sasaki-Einstein with associated Calabi-Yau cone $(C, g_0, J_0, \Omega_0)$ and radius function $r$. Suppose that there is a constant $\lambda_\Omega < 0$, a compact subset $K \subset X$, and a diffeomorphism $\Phi : (1, \infty) \times \Sigma \to X \setminus K$ such that 
\[
\Phi^* \Omega - \Omega_0 = O(r^{\lambda_\Omega}).
\]
Let $\mathfrak{k} \in H^2(\mathcal{X})$ be an almost compactly supported K\"ahler class of rate $\lambda_{\mathfrak{k}} < 0$ (see Definition \ref{defn:mu}). Denote by $\lambda$ the maximum of $\lambda_{\mathfrak{k}}$ and $\lambda_\Omega$, and assume that $\lambda +2 \notin \mathcal{P} \cap (0,2)$. Then for all $c > 0$, there is an asymptotically conical Calabi-Yau metric $g_c$ on $\mathcal{X}$ whose associated K\"ahler form $\omega_c$ lies in $\mathfrak{k}$ and satisfies 
\[
{\Phi^* \omega_c - c \omega_0} = O(r^{\max\{-2n, \lambda\}}).
\]
Moreover, if $\lambda < -2n$, then there is an $\epsilon > 0$ such that 
\[
\Phi^* \omega_c - c \omega_0  = \textnormal{const} \sqrt{-1} \ddb r^{2 - 2n} + O(r^{-2n-1-\epsilon}).
\]
\end{theorem}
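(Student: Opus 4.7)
The plan is to reduce the theorem to a single application of Theorem~\ref{thm:main} after installing a well-chosen background K\"ahler form in the class $\mathfrak{k}$, and then to read off the asymptotics of the resulting Monge--Amp\`ere solution. Using that $\omega_0 = \tfrac{1}{2}\sqrt{-1}\ddb r^2$ on the Calabi--Yau cone together with the assumption that $\mathfrak{k}$ is almost compactly supported of rate $\lambda_{\mathfrak{k}}$, I would first produce a smooth K\"ahler form $\omega \in \mathfrak{k}$ satisfying $\Phi^*\omega - c\,\omega_0 = O(r^{\lambda_{\mathfrak{k}}})$ on the end: start from a compactly supported representative of $\mathfrak{k}$, modify on the end using the $\ddb$-potential for $c\omega_0$ and a cutoff supported at infinity, and perturb on a large compact set to ensure global positivity.

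With such $\omega$ in hand, after rescaling $\Omega$ by a positive constant so that the cone normalization $(\sqrt{-1})^{n^2}\Omega_0\wedge\overline{\Omega_0} = c^n\,\omega_0^n$ holds, I would define $F$ by $(\sqrt{-1})^{n^2}\Omega\wedge\overline{\Omega} = e^F\omega^n$, so that the Calabi--Yau condition for $\omega_c := \omega + \sqrt{-1}\ddb\varphi$ becomes exactly \eqref{eqn:MA}. Combining $\Phi^*\omega = c\omega_0 + O(r^{\lambda_{\mathfrak{k}}})$ with the hypothesis $\Phi^*\Omega = \Omega_0 + O(r^{\lambda_\Omega})$ and expanding the logarithm gives $F \in C_\lambda^\infty(\mathcal{X})$ with $\lambda = \max\{\lambda_{\mathfrak{k}},\lambda_\Omega\}$.

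Depending on the size of $\lambda$ I would now apply the appropriate case of Theorem~\ref{thm:main}: case (iii) when $\lambda \in (-2,0)$ (using the hypothesis $\lambda+2\notin\mathcal{P}\cap(0,2)$), case (ii) when $\lambda \in (-2n,-2)$, and case (i) when $\lambda \le -2n$ (where one may enlarge $\beta$ slightly into the admissible interval $(\max\{-4n,\beta_1^-,\lambda_g-2n\},-2n)$, which is permitted since $F$ decays faster than any such $\beta$). In the first two cases the output $\varphi \in C_{\lambda+2}^\infty$ gives $\sqrt{-1}\ddb\varphi = O(r^\lambda)$ and hence $\Phi^*\omega_c - c\omega_0 = O(r^{\max\{-2n,\lambda\}})$. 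In case (i) we write $\varphi = c_0 \rho^{2-2n} + \psi$ with $\psi \in C_{\lambda+2}^\infty$, so $\sqrt{-1}\ddb\varphi = c_0\sqrt{-1}\ddb\rho^{2-2n} + O(r^\lambda)$; the first summand is $O(r^{-2n})$ and, since $\lambda < -2n$, dominates the second, again yielding the desired decay.

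The \emph{moreover} clause is where the main difficulty lies. The expansion above gives
\[
\Phi^*\omega_c - c\omega_0 = c_0\sqrt{-1}\ddb r^{2-2n} + \sqrt{-1}\ddb\psi + \bigl(\Phi^*\omega - c\omega_0\bigr),
\]
and a priori the last two terms are only $O(r^\lambda)$. To promote this to $O(r^{-2n-1-\epsilon})$ one must exploit the discreteness of $\mathcal{P}$: the weight $2-2n$ is exceptional, and the next exceptional weight below it is strictly smaller by an amount dictated by the subsequent eigenvalue of $\Delta_\Sigma$. The strategy is to bootstrap: subtract off $c_0\rho^{2-2n}$, rewrite \eqref{eqn:MA} as a linear Laplace equation for $\psi$ whose right-hand side lies in a strictly finer weighted space (the quadratic Monge--Amp\`ere error being automatically of twice the decay rate), and reapply the Fredholm theory of Theorem~\ref{thm:main} on the next admissible interval. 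The $\epsilon$ is then controlled by the spectral gap on $\Sigma$, and the subtlest point is tracking this bootstrap carefully enough---possibly through several iterations---so that $\psi$ together with the background error $\Phi^*\omega - c\omega_0$ can be absorbed into a single $O(r^{-2n-1-\epsilon})$ remainder.
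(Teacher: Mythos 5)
Your overall reduction is the same as the paper's: build a background K\"ahler form in $\mathfrak{k}$ that is asymptotically conical of rate $\lambda$, write the Ricci potential $F$, show it lies in $C^\infty_\lambda(\mathcal{X})$, and apply the appropriate case of Theorem~\ref{thm:main}. The case split in $\lambda$ you describe is exactly the one the paper uses.

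The main place your sketch is too thin is the construction of the background form. You propose to patch a representative of $\mathfrak{k}$ with the cone potential via a cutoff ``and perturb on a large compact set to ensure global positivity.'' But the negativity introduced by cutting off $\sqrt{-1}\ddb u$ (where $d\eta = -\sqrt{-1}\ddb u$, coming from Lemma~\ref{lem:ddbar}) lives in an \emph{annulus} $\{2RS \leqslant \rho \leqslant 3RS\}$, not on a fixed compact set, so a compactly-supported perturbation of $\omega$ cannot swamp it. The paper handles this with the strictly plurisubharmonic functions $h_\alpha$ of Lemma~\ref{lem:pluri} (growth $r^{2\alpha}$ with $\alpha\in(0,1)$): the explicit form
\[
\hat{\omega} = \omega + \sqrt{-1}\ddb(\psi u) + C\sqrt{-1}\ddb\bigl((1-\psi_S)h_\alpha\bigr) + c\sqrt{-1}\ddb h_1
\]
uses the constant $C$ on the $h_\alpha$-term to dominate the cutoff error on the inner annulus, while the scaling parameter $S$ and the subquadratic growth $\alpha<1$ ensure the outer transition error is small compared with $c\sqrt{-1}\ddb h_1 \sim 2c\omega_0$. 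This is the delicate step you skipped, and it is also where the hypothesis $n > 2$ (through Lemma~\ref{lem:ddbar}) and the Sasaki--Einstein structure (through Lemma~\ref{lem:pluri}) are used. Also, a small misstatement: $\mathfrak{k}$ need not have a compactly supported representative; the ``almost compactly supported'' hypothesis only gives a K\"ahler representative $\omega$ and a $1$-form $\eta$ with $\omega - d\eta = O(r^{\lambda_{\mathfrak{k}}})$.

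For the \emph{moreover} clause, the paper actually does not run the bootstrap you describe; it reads the decay directly off the output of case~(i), which already produces $\hat{\varphi}\in\mathbb{R}\rho^{2-2n}\oplus C^\infty_{\beta+2}(\mathcal{X})$ for $\beta$ in the admissible interval below $-2n$, and otherwise defers to the argument in \cite{ch13}. Your spectral-gap bootstrap is a reasonable way to try to get the precise $r^{-2n-1-\epsilon}$ remainder, but as written it does not address the fact that the background error $\Phi^*\hat{\omega}-c\omega_0 = O(r^{\lambda})$ sits at rate $\lambda$, which for $-2n-1<\lambda<-2n$ is \emph{not} $O(r^{-2n-1-\epsilon})$; one needs a further improvement on the background, not just on $\hat{\varphi}$, to reach that rate (this is where the $\mathfrak{k}\in H^2_c$ case of Corollary~\ref{cor:crepant} becomes relevant, since there $\lambda_{\mathfrak{k}}=-\infty$).
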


The contents of this paper are as follows. After a notion of asymptotically conical orbifold is introduced in Section \ref{sec:prelim}, the Fredholm index of the Laplacian of such orbifolds is studied in Section \ref{sec:fredholm}, thereby establishing existence and uniqueness results for the Dirichlet problem involving this Laplacian. The next section (Section \ref{sec:continuity}) outlines an approach to proving the main case (case (ii)) of Theorem \ref{thm:main} by using the continuity method of a one-parameter family of equations. Such a continuity method requires a priori estimates involving weighted H\"older norms on solutions to the one-parameter family of equations, and we reserve the next section (Section \ref{sec:apriori}) to derive these estimates, following the ALE approach of \cite{joyce} and using estimates for the Laplacian established in Section \ref{sec:fredholm}. Cases (i) and (iii) of Theorem \ref{thm:main} both follow from the existence of case (ii), but there are some additional arguments needed to show that the solution that is obtained decays with the desired rate, and these arguments are supplied in Section \ref{sec:cases}. Finally, the construction of Calabi-Yau (Ricci-flat) metrics is discussed in Section \ref{sec:calabiyau}, with specific attention given to the case of crepant partial resolutions of Calabi-Yau cones \ref{cor:crepant}, several examples of which are given in Section \ref{sec:examples}. 

One question that is not addressed in this paper is the uniqueness of the K\"ahler metric constructed by Theorem \ref{thm:main}. However, this question is discussed further detail in \cite{ch13} for the setting of manifolds. Indeed a main point of \cite{ch13} is that for uniqueness the decay condition $\mathcal{O}(r^{-n-\epsilon})$ needed in earlier work \cite{goto, joyce, coeveringarxiv} can be relaxed to $\mathcal{O}(r^{-\epsilon})$ using ideas about harmonic functions. We expect that such proofs carry over to the setting of orbifolds with the proper notational adjustments. 

For a more complete discussion of previous results in the setting of manifolds, we encourage the reader to consult \cite{ch13}. That paper also includes several families of examples that we neglect to discuss here (such as small resolutions associated to flag varieties and affine deformations of cones), but which we suspect possess similar generalizations to the orbifold setting. 
 
\section*{Acknowledgements}

The author would like to thank Hans-Joachim Hein, Daniel Litt, and Chiu-Chu Melissa Liu for helpful discussions and suggestions. The author is especially grateful to Chiu-Chu Melissa Liu for encouragement and support.  This material is based upon work supported by the National Science Foundation Graduate Research Fellowship Program under Grant No. DGE 16-44869. Any opinions, findings, and conclusions or recommendations expressed in this material are those of the author(s) and do not necessarily reflect the views of the National Science Foundation. 

\section{Preliminaries}\label{sec:prelim}

\begin{definition}\label{defn:cone}
Let $(\Sigma, g_\Sigma)$ be a compact Riemannian manifold. 

\begin{enumerate}
\item[(i)] The \emph{Riemannian cone} $C = C(\Sigma)$ over $\Sigma$ is defined to be the manifold $\mathbb{R}_+ \times \Sigma$ with the metric 
\[
g_0 = dr^2 + r^2 g_\Sigma
\]
where $r$ is a local coordinate on $\mathbb{R}_+ = (0,\infty)$. 
\item[(ii)] A tensor $T$ on the cone $C$ is said to decay with rate $\lambda \in \mathbb{R}$, written $T = O(r^\lambda)$, if $|\nabla_0^kT|_{g_0} = O(r^{\lambda - k})$ for each $k \in \mathbb{N}$, where $\nabla_0$ denotes the Levi-Civita connection corresponding to $g_0$. In particular, if $T$ is a tensor on $\Sigma$, we may regard $T$ as a tensor on the cone with decay rate $\lambda = -2$. 
\item[(iii)] We say that the cone $(C,g_0)$ is K\"ahler if $g_0$ is K\"ahler and $C$ is equipped with a choice of $g_0$-parallel complex structure $J_0$. In such a case, there is a K\"ahler form $\omega_0(U,V) = g_0(J_0 U, V)$ with local expression $\omega_0 = (i/2) \ddb r^2$. 
\end{enumerate}
\end{definition}

\begin{definition}\label{defn:AC}
Let $(\mathcal{X},g, J)$ be a complete K\"ahler orbifold with underlying space $X$, and let $(C,g_0, J_0)$ be a K\"ahler cone.  
\begin{enumerate}
\item[(i)] We say that $\mathcal{X}$ is asymptotically conical of rate $\lambda_g$ with tangent cone $C$ if there is a diffeomorphism $\Phi : C \setminus K \to X \setminus K'$, with $K,K'$ compact, such that $\Phi^*g - g_0 = O(r^{\lambda_g})$ and also $\Phi^*J - J_0 = O(r^{\lambda_g})$. (In particular, this means that all of the orbifold points of $\mathcal{X}$ are contained in $K'$.)
\item[(ii)] A radius function on an asymptotically conical $\mathcal{X}$ is a smooth function $\rho$ on $\mathcal{X}$ with codomain $[1,\infty)$ satisfying $\Phi^*\rho = r$ away from $K'$. 
\end{enumerate}
\end{definition}

A radius function $\rho$ on an asymptotically conical $\mathcal{X}$ allows us to define spaces of functions $C_\beta^k(\mathcal{X})$ in the following manner. For an integer $k \geqslant 0$ and a weight $\beta \in \mathbb{R}$, let $C_\beta^k(\mathcal{X})$ be the space of continuous functions $f$ with $k$ continuous derivatives such that the norm 
\[
\norm{f}_{C_\beta^k(\mathcal{X})} = \sum_{j=0}^k \sup_\mathcal{X}|\rho^{j-\beta} \nabla^j f|_g 
\]
is finite. Let $C^\infty_\beta(\mathcal{X})$ denote the intersection of all of the $C^k_\beta(\mathcal{X})$ for $k \geqslant 0$. 

We may also define weighted H\"older spaces $C_\beta^{k,\alpha}(\mathcal{X})$. The metric $g$ allows us to define the distance $d(x,y)$ between two points $x,y$ in the underlying space $X$ as the infimum of the lengths of all possible continuous \emph{admissible} curves connecting them (see \cite[Theorem 38]{borzellino} the notion of admissible curve and the notion of distance). In this way, we obtain the notion of a ball $B_r(x)$ of radius $r$ centered about $x$. The Levi-Civita connection for $g$ also allows us to say when a path is a geodesic, and hence we can say that a subset $Y$ of the underlying space is strongly convex if any two points are joined by a unique minimal geodesic entirely contained within $Y$. The convexity radius $r(x)$ at $x$ is defined to be the largest possible radius $R$ such that $B_r(x)$ is strongly convex for all $0 < r < R$. The convexity radius $r(g)$ of $g$ is the infimum over all $r(x)$. Taken over the compact set $K'$, the infimum will be positive by compactness, and over the rest of the orbifold, the infimum will be positive by the asymptotically conical assumption (and the compactness of $\Sigma$).  For a tensor $T$ on $\mathcal{X}$, we may then define the seminorm 
\[
[T]_{C_\beta^{0,\alpha}(\mathcal{X})} = \sup_{\substack{x \neq y \in X \\ d(x,y) < r(g)}} \left[\min(\rho(x), \rho(y))^{-\beta} \frac{|T(x) - T(y)|}{d(x,y)^\alpha}\right]
\]
where the distance $|T(x) - T(y)|$ is defined via parallel transport along the minimal geodesic from $x$ to $y$. Then define the weighted H\"older space $C^{k,\alpha}_\beta(\mathcal{X})$ to be the space of functions $f \in C^k_\beta(\mathcal{X})$ for which the norm 
\[
\norm{f}_{C_\beta^{k,\alpha}(\mathcal{X})} = \norm{f}_{C_\beta^k(\mathcal{X})} + [\nabla^kf]_{C^{0,\alpha}_{\beta - k - \alpha}}
\]
is finite.

For a pair of weights $\beta' < \beta$ and a pair of constants $\alpha' > \alpha$, the inclusion 
\begin{align}\label{eqn:AA}
C_{\beta'}^{k,\alpha'}(\mathcal{X}) \hookrightarrow C_{\beta}^{k,\alpha}(\mathcal{X})
\end{align}
is continuous.  In fact, analogous to the Arzela-Ascoli Theorem for compact manifolds/orbfiolds, this inclusion is compact, as stated below. 

\begin{theorem}\label{thm:AA}
For a pair of weights $\beta' < \beta$ and a pair of constants $\alpha' > \alpha$, the inclusion \eqref{eqn:AA} is compact. 
\end{theorem}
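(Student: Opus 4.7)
The plan is to adapt the standard Arzela--Ascoli argument via an exhaustion of $\mathcal{X}$ by compacta, using the weight gap $\beta' < \beta$ (together with the regularity gap $\alpha' > \alpha$) to control the tail contributions at infinity. Let $(f_n)$ be a bounded sequence in $C^{k,\alpha'}_{\beta'}(\mathcal{X})$, say with $\norm{f_n}_{C^{k,\alpha'}_{\beta'}} \leq M$, and exhaust $\mathcal{X}$ by the pre-compact sublevel sets $K_m = \{\rho \leq m\}$. On each $K_m$, the boundedness of $\rho$ makes the weighted norm equivalent (up to a constant depending on $m$) to the unweighted $C^{k,\alpha'}(K_m)$-norm. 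The classical Arzela--Ascoli theorem on compact orbifolds, which reduces to the Euclidean statement after passing to finitely many orbifold charts covering $K_m$, yields a compact embedding $C^{k,\alpha'}(K_m) \hookrightarrow C^{k,\alpha}(K_m)$. A diagonal argument across $m$ then produces a subsequence (still denoted $f_n$) converging in $C^{k,\alpha}_{\mathrm{loc}}$ to some $f_\infty$; since pointwise bounds pass to the limit, $f_\infty$ still satisfies $\norm{f_\infty}_{C^{k,\alpha'}_{\beta'}} \leq M$, and thus $h_n = f_n - f_\infty$ is bounded in $C^{k,\alpha'}_{\beta'}$ by $2M$ while converging to $0$ in $C^{k,\alpha}_{\mathrm{loc}}$.

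Next I would split the weighted target norm of $h_n$ into a tail contribution and an interior contribution. Fix $\varepsilon > 0$. For the $C^k$ part, at a point with $\rho \geq R$ the bound $|\rho^{j-\beta}\nabla^j h_n| \leq 2M\, \rho^{\beta'-\beta} \leq 2M\, R^{\beta'-\beta}$ is $<\varepsilon$ once $R$ is large, uniformly in $n$. For the H\"older seminorm, which is restricted to pairs with $d(x,y) < r(g)$, on the region where $\min(\rho(x),\rho(y)) \geq R$ the comparison
\[
\min(\rho)^{k+\alpha-\beta} \frac{|\nabla^k h_n(x) - \nabla^k h_n(y)|}{d(x,y)^{\alpha}} \leq 2M\, r(g)^{\alpha'-\alpha}\, \min(\rho)^{(\alpha-\alpha')+(\beta'-\beta)}
\]
is again uniformly small once $R$ is large, because both $\alpha - \alpha'$ and $\beta' - \beta$ are strictly negative. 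The remaining pairs have $\min(\rho(x),\rho(y)) < R$; combined with $d(x,y) < r(g)$ and the global Lipschitz control $|\rho(x) - \rho(y)| \leq C\, d(x,y)$ (which holds since $\rho$ is asymptotic to the cone coordinate $r$ and smooth on the compact core), this forces both $x$ and $y$ to lie in a slightly enlarged compact set $K_{R'}$.

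On such a compact $K_{R'}$ the weighted norm $\norm{\cdot}_{C^{k,\alpha}_\beta(K_{R'})}$ is equivalent to the unweighted $C^{k,\alpha}(K_{R'})$-norm, so the $C^{k,\alpha}_{\mathrm{loc}}$-convergence $h_n \to 0$ forces $\norm{h_n}_{C^{k,\alpha}_\beta(K_{R'})} \to 0$. Combining this with the uniform tail estimate of the previous paragraph yields $\limsup_n \norm{h_n}_{C^{k,\alpha}_\beta(\mathcal{X})} \leq C\varepsilon$, and letting $\varepsilon \to 0$ gives $f_n \to f_\infty$ in $C^{k,\alpha}_\beta(\mathcal{X})$, establishing compactness. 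I expect the main technical nuisance to be the bookkeeping for H\"older pairs $(x,y)$ straddling the boundary of $K_R$; the short-range restriction $d(x,y) < r(g)$ plus Lipschitz control on $\rho$ is exactly what makes this essentially routine. The orbifold hypothesis introduces no new analytic difficulty, since by Definition \ref{defn:AC} all singular points are confined to a fixed compact region, where the classical Arzela--Ascoli theorem (applied chart by chart) suffices.
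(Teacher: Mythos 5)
Your proof is correct; since the paper itself does not reproduce an argument but only cites \cite[Lemme 3]{chaljub} for the manifold case (noting the orbifold adaptation is minor), your direct write-up supplies exactly what is left implicit. The strategy---exhausting by compacta $K_m$, applying the classical Arzel\`a--Ascoli theorem chart-by-chart to diagonalize a $C^{k,\alpha}_{\mathrm{loc}}$-convergent subsequence, and then using the strict gaps $\beta' < \beta$ and $\alpha' > \alpha$ to make the tail contribution to the $C^{k,\alpha}_\beta$-norm uniformly small---is indeed the standard argument one expects to find in the cited reference, and the bookkeeping on H\"older pairs straddling $\partial K_R$ via the short-range restriction $d(x,y) < r(g)$ and Lipschitz control of $\rho$ is handled appropriately.
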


This theorem is proved for weighted H\"older spaces on complete non-compact manifolds in \cite[Lemme 3]{chaljub}. The arguments presented there can be applied to the setting of orbifolds with only minor notational adjustments.

\section{Fredholm index of the Laplacian}\label{sec:fredholm}

For an asymptotically conical $(\mathcal{X}, g)$ of complex dimension $n$, the Laplacian $\Delta$ induced from $g$ defines a linear map 
\begin{align}\label{eqn:laplacian}
\Delta : C^{k+2,\alpha}_{\beta + 2}(\mathcal{X}) \to C^{k,\alpha}_{\beta}(\mathcal{X})
\end{align}
for \emph{any} weight $\beta$. In the setting of compact orbifolds, the operator $\Delta$ is elliptic, and there are corresponding a prior estimates on solutions to equations involving $\Delta$ in corresponding (unweighted) H\"older spaces \cite{faulk}. The same is true in the setting of conical orbifolds in weighted H\"older spaces:

\begin{theorem}[Weighted Schauder estimates on conical orbifolds]\label{thm:schauder}
Let $(\mathcal{X}, g)$ be an asymptotically conical Riemannian orbifold, and let  $k,\ell \in \mathbb{N}$ and $\alpha \in (0,1)$.  Then there is a constant $C$ such that for each $f \in C^{k+2,\alpha}_{\beta + 2}(\mathcal{X})$, we have
\[
\norm{f}_{C^{k+2,\alpha}_{\beta + 2}(\mathcal{X})} \leqslant C(\norm{\Delta f}_{C^{k,\alpha}_{\beta}(\mathcal{X})} + \norm{f}_{C^0_{\beta + 2}(\mathcal{X})}). 
\]
\end{theorem}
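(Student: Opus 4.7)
The plan is to combine the standard unweighted Schauder estimate on a compact piece with a rescaling argument on the asymptotic end. I would split $\mathcal{X} = \mathcal{X}_0 \cup \mathcal{X}_\infty$, where $\mathcal{X}_0 = \{\rho \leqslant 2\}$ is a compact suborbifold containing every orbifold point of $\mathcal{X}$ (these lie in $K'$ by Definition \ref{defn:AC}(i)) and $\mathcal{X}_\infty = \{\rho \geqslant 1\}$ is a smooth manifold diffeomorphic via $\Phi$ to $\{r \geqslant 1\} \subset C$. On $\mathcal{X}_0$ the radius function $\rho$ is bounded above and below, so the weighted norms are equivalent to ordinary H\"older norms, and the bound follows immediately from the interior Schauder estimate for elliptic operators on compact orbifolds \cite{faulk}.

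On $\mathcal{X}_\infty$ I would perform a dyadic annular decomposition, using $A_R = \{R \leqslant \rho \leqslant 2R\}$ and a slightly fatter version $A_R' = \{R/2 \leqslant \rho \leqslant 4R\}$ for $R = 2^j$, $j \geqslant 0$. Transporting through $\Phi$ and rescaling via $\phi_R(r,x) = (Rr,x)$ pulls $A_R$, $A_R'$ back to fixed annuli $A_1$, $A_1'$ in the cone $C$, and the rescaled metric $\tilde g_R := R^{-2} \phi_R^* \Phi^* g$ converges smoothly to the exact cone metric $g_0$ on $A_1'$ as $R \to \infty$. This follows from writing $\Phi^* g = g_0 + h$ with $h = O(r^{\lambda_g})$ (so that each component of $h$ in the cone coordinates $(r,x)$ is of the form $O(r^{\lambda_g + 2 - i - j})$ in the schematic index pattern) and checking that $R^{-2}\phi_R^* h = O(R^{\lambda_g})$ on $A_1'$, with derivatives decaying at the same rate. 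A direct scaling check shows that each weighted H\"older seminorm on $A_R$ equals $R^{-\beta-2}$ times the ordinary H\"older seminorm of $\phi_R^* f$ on $A_1$ computed with respect to $\tilde g_R$, and similarly that $\|\Delta f\|_{C_\beta^{k,\alpha}(A_R)}$ equals $R^{-\beta-2}\|\Delta_{\tilde g_R}(\phi_R^* f)\|_{C^{k,\alpha}(A_1,\tilde g_R)}$. Applying the ordinary interior Schauder estimate on $A_1 \subset A_1'$ for the operator $\Delta_{\tilde g_R}$ and rescaling back therefore yields
\[
\|f\|_{C_{\beta+2}^{k+2,\alpha}(A_R)} \leqslant C\bigl(\|\Delta f\|_{C_\beta^{k,\alpha}(A_R')} + \|f\|_{C_{\beta+2}^{0}(A_R')}\bigr)
\]
with $C$ independent of $R$. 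Since the fattened annuli $A_R'$ have bounded overlap, passing to the supremum over dyadic $R$ and combining with the estimate on $\mathcal{X}_0$ delivers the global estimate.

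The main obstacle is producing a truly uniform-in-$R$ Schauder constant in the rescaled picture. For $R$ in any bounded range only finitely many annuli appear and these are absorbed into the compact-set estimate, so the issue is really the behavior as $R \to \infty$; there the uniform convergence $\tilde g_R \to g_0$ in $C^{k+2,\alpha}$ on the fixed domain $A_1'$, combined with the continuous dependence of the Schauder constant on the coefficients of a uniformly elliptic operator, supplies a single constant that works for all large $R$. A secondary technical point is to verify that the weighted H\"older seminorm rescales as claimed; this uses that the seminorm is computed only over pairs of points within the convexity radius, so pairs in $A_R$ pull back to pairs in $A_1$ of bounded $\tilde g_R$-distance, and parallel transport of tensors along the connecting geodesic scales covariantly with the metric rescaling. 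Because all orbifold singularities are confined to $\mathcal{X}_0$ by the AC hypothesis, no orbifold structure enters the rescaling step, and the manifold-side argument proceeds exactly as in the smooth AC setting.
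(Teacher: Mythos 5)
Your proof is correct, and it has the same two-piece structure as the paper's proof (a compact region containing all orbifold points, handled by the standard compact-orbifold Schauder estimate, plus the asymptotic end, which is smooth). The difference lies in how the asymptotic end is treated. The paper covers $K'$ by finitely many orbifold charts and applies ordinary $\mathbb{R}^n$ Schauder estimates chart by chart, then, for $X\setminus K'$, simply invokes Theorem \ref{thm:consch} (the weighted Schauder estimate on a Riemannian cone, quoted from Marshall) together with the AC hypothesis, leaving implicit how the cone estimate transfers to a metric that is only asymptotically conical. You instead prove this transfer from scratch via the dyadic annular decomposition and the rescaling $\phi_R$: the rescaled metrics $\tilde g_R = R^{-2}\phi_R^*\Phi^*g$ converge to $g_0$ on a fixed fat annulus, the Schauder constant depends continuously on the coefficients, and a direct scaling computation shows that the weighted seminorms on $A_R$ match, up to bounded factors, the unweighted seminorms of $\phi_R^*f$ on $A_1$ in the $\tilde g_R$-metric. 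In effect you unpack what the paper treats as a black box. This buys you a self-contained argument that makes the source of the uniform constant transparent (and is exactly the standard device used to prove Theorem \ref{thm:consch} itself), at the cost of being longer; the paper's proof is more modular but leans on a cited result for the delicate step. Both are sound, and your observation that the H\"older seminorm is computed only over pairs within the convexity radius (so pairs stay inside a single fattened annulus) correctly closes the one point where a gap could arise when patching the local estimates together.
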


To prove this theorem, we require first the result for manifolds, which is discussed in greater detail, for example, in Marshall \cite{marshal}. 

\begin{theorem}[Weighted Schauder estimates on conical manifolds]\label{thm:schaudercone}
Let $(C(\Sigma), g_\Sigma)$ be a Riemannian cone, and let $\alpha \in (0,1)$, $k \in \mathbb{N}$, and $\beta \in \mathbb{R}$. Then there is a constant $C$ such that for each $f \in C^{k+2,\alpha}_{\beta + 2}(C(\Sigma))$, we have 
\[
\norm{f}_{C^{k+2,\alpha}_{\beta + 2}(C(\Sigma))} \leqslant C(\norm{\Delta f}_{C^{k,\alpha}_{\beta}(C(\Sigma))} + \norm{f}_{C^0_{\beta + 2}(C(\Sigma))}). 
\]
\end{theorem}

\medskip

\noindent \emph{Proof of Theorem \ref{thm:schauder}}. Cover the compact set $K'$ by finitely many orbifold charts $(\widetilde{U}_\gamma, \pi_\gamma, G_\gamma)$. We may select $G_\gamma$-invariant relatively compact subsets $\widetilde{U}'_\gamma \subset \widetilde{U}_\gamma$ satisfying $d(\widetilde{U}'_\gamma, \partial \widetilde{U}_\gamma) > 0$ and whose supports still cover $K'$. Because the collection of supports $\{U_\gamma'\}$ covers $K'$, the norm $\norm{f}_{C^{k+2, \alpha}_{\beta + 2}(\mathcal{X})}$ is equivalent to the norm defined by 
\[
\norm{f}_{C^{k+2,\alpha}_{\beta + 2}(X \setminus K')} + \sum_{\gamma} \norm{f}_{C^{k+2, \alpha}(U_\gamma')}. 
\]
(Notice the absence of the weights in the norms over the subsets $U_\gamma'$.)
The ordinary Schauder estimates in $\mathbb{R}^n$ imply that for each $\gamma$, there is a constant $C_\gamma$ such that 
\[
\norm{f}_{C^{k+2,\alpha}(\widetilde{U}_\gamma')} \leqslant C_\gamma(\norm{\Delta f}_{C^{k,\alpha}(\widetilde{U}_\gamma)} + \norm{f}_{C^0(\widetilde{U}_\gamma)}).
\]
Moreover, the weighted Schauder estimates on conical manifolds (Theorem \ref{thm:consch}) together with the fact that $(\mathcal{X}, g)$ is asymptotically conical imply there is a constant $C'$ such that
\[
\norm{f}_{C^{k+2,\alpha}_{\beta + 2}(X \setminus K')} \leqslant C'(\norm{\Delta f}_{C^{k,\alpha}_{\beta}(X \setminus K')} + \norm{f}_{C^0_{\beta + 2}(X \setminus K')}). 
\]
If $C$ denotes the maximum of $C'$ and the numbers $C_\gamma$, then the result follows. \hfill $\Box$

\medskip

In the compact case, the Arzela-Ascoli Theorem---together with (unweighted) Schauder estimates---implies  that $\Delta$ is Fredholm. However, this implication fails to be true in the asympotically conical case, because, for example, $\mathcal{X}$ is not compact and hence neither is the embedding dealt with by Arzela-Ascoli. Nevertheless, for almost all weights $\beta$, the map \eqref{eqn:laplacian} is still Fredholm, as stated below. 

\begin{theorem}\label{thm:fredholm}
For an asymptotically conical K\"ahler orbifold $\mathcal{X}$ of complex dimension $n$ with K\"ahler cone $C(\Sigma)$, define the set of exceptional weights 
\[
\mathcal{P} = \left\{- \frac{m-2}{2} \pm \sqrt{\frac{(m-2)^2}{4} + \mu} : \mu \geqslant 0 \: \text{is an eigenvalue of $\Delta_\Sigma$}\right\}.
\]
where $m  = 2n =  \dim_{\mathbb{R}}C(\Sigma)$. Then the operator \eqref{eqn:laplacian} is Fredholm if $\beta + 2 \notin \mathcal{P}$. 
\end{theorem}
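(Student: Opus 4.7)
The plan follows the classical Lockhart-McOwen strategy for Fredholmness of Laplacians on asymptotically conical spaces, adapted to the orbifold setting. The key structural observation is that Definition \ref{defn:AC} forces all orbifold points to lie in the compact set $K'$, so the entire asymptotic analysis takes place on a smooth cone end; only the interior (compact) estimates feel the orbifold structure, and those are already absorbed into the weighted Schauder estimates of Theorem \ref{thm:schauder}.

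First I would analyze the model Laplacian $\Delta_0$ on the exact cone $C(\Sigma)$. Decomposing $f(r,\theta) = \sum_j f_j(r)\phi_j(\theta)$ against an $L^2(\Sigma)$-orthonormal eigenbasis of $\Delta_\Sigma$ with eigenvalues $\mu_j \geqslant 0$, each modal equation is the Euler-type ODE
\[
-f_j'' - \frac{m-1}{r}f_j' + \frac{\mu_j}{r^2}f_j = h_j(r),
\]
whose indicial roots are exactly $\nu_j^{\pm} = -\tfrac{m-2}{2}\pm\sqrt{\tfrac{(m-2)^2}{4}+\mu_j}$, i.e., the elements of $\mathcal{P}$. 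For $\beta+2 \notin \mathcal{P}$, an explicit modal Green's function (built from the decaying and growing solutions, selected consistently with whether $\beta+2$ lies above or below each $\nu_j^\pm$) produces a bounded right inverse from $C^{k,\alpha}_\beta$ to $C^{k+2,\alpha}_{\beta+2}$ on the cone end; summation over $j$ converges in weighted Hölder norm thanks to Weyl's law and interior Schauder estimates applied modewise. The output is the model estimate
\[
\|f\|_{C^{k+2,\alpha}_{\beta+2}(C(\Sigma)\cap\{r>R\})} \leqslant C\bigl(\|\Delta_0 f\|_{C^{k,\alpha}_\beta(C(\Sigma))} + \|f\|_{C^0(\{R\leqslant r \leqslant 2R\})}\bigr).
\]

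Next I would transfer this to $\mathcal{X}$. The decay hypothesis $\Phi^*g - g_0 = O(r^{\lambda_g})$ gives $\Phi^*\Delta - \Delta_0$ mapping $C^{k+2,\alpha}_{\beta+2}$ to $C^{k,\alpha}_{\beta+\lambda_g}$ with $\lambda_g<0$, so this perturbation is negligible at infinity and can be absorbed. Gluing the model estimate to the weighted Schauder estimate of Theorem \ref{thm:schauder} by means of a cutoff supported far out on the end produces the improved a priori bound
\[
\|f\|_{C^{k+2,\alpha}_{\beta+2}(\mathcal{X})} \leqslant C\bigl(\|\Delta f\|_{C^{k,\alpha}_\beta(\mathcal{X})} + \|f\|_{C^{k+2,\alpha}_{\beta'+2}(U)}\bigr)
\]
for any $\beta'<\beta$ and some compact $U\subset X$. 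By the compactness of the inclusion in Theorem \ref{thm:AA}, a standard functional-analytic argument (due to Peetre) then yields finite-dimensional kernel and closed range for $\Delta$. Finite-dimensional cokernel follows from the parallel analysis applied to the $L^2$ formal adjoint of $\Delta$ (which is $\Delta$ itself) on the dual weighted space; the symmetry $\nu_j^+ + \nu_j^- = 2-m$ makes $\mathcal{P}$ symmetric under $\nu \mapsto 2-m-\nu$, so the dual weight avoids $\mathcal{P}$ precisely when $\beta+2$ does.

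The main obstacle is the model-cone step: obtaining a bounded inverse in weighted Hölder (not merely $L^2$) norms requires controlling the modal Green's function uniformly in $\mu_j$ while summing over modes, and the freedom to choose decaying versus growing solutions consistently with $\beta+2$ is exactly what fails when $\beta+2 \in \mathcal{P}$. Once this is in hand, the orbifold adaptations are purely notational, since by hypothesis the orbifold singularities are confined to $K'$ and the cone end is smooth.
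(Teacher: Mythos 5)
Your proposal is correct and follows the same overall Lockhart--McOwen strategy as the paper: interior estimates from the orbifold weighted Schauder estimates (Theorem \ref{thm:schauder}), a model-operator estimate on the cone end, cutoff gluing, and then a compactness argument producing finite-dimensional kernel and closed range, with the cokernel handled by the symmetric adjoint weight. The two places you diverge in execution are worth noting. First, for the model estimate the paper does not rederive the modal Green's function; it invokes Lemma \ref{lem:coneisom} (from \cite{lm85, goto}), stating that the conjugated translation-invariant operator $P = e^{2t}\circ\Delta$ is an \emph{isomorphism} on the full cylinder for $\beta+2\notin\mathcal{P}$, which packages the indicial-roots/Green's-function analysis you sketch into a single cited ingredient; your hand-built approach is sound in outline but the weighted-Hölder summation over modes (which you flag yourself as ``the main obstacle'') is precisely what the citation absorbs. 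Second, for compactness the paper does not pass through a weaker norm on a compact set: it writes $\Delta = A_1 + A_2$ where $A_2$ consists of the commutators $[\varphi_i,\Delta]$ and a compactly supported zeroth-order term, observes that each component of $A_2$ has image supported on a fixed compact subset, and applies the ordinary Arzel\`a--Ascoli theorem there together with Proposition 3.9.7 of \cite{narasimhan}; your Peetre-style argument via the compact inclusion of Theorem \ref{thm:AA} restricted to a compact $U$ reaches the same conclusion by a slightly different route. Both variations buy the same result; the paper's version is more streamlined by leaning on existing citations, while yours is more self-contained.
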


\begin{remark}
In general, the set $\mathcal{P}$ is disjoint from the interval $(-2n+2, 0)$ and symmetric about the point $1-n = (2-m)/2$. 
\end{remark}

To prove this result, we require a result from \cite{lm85} which states that an elliptic operator on the \emph{full cylinder} is an isomorphism away from the exceptional weights. The full cylinder is the product $\text{Cyl}(\Sigma) = \mathbb{R} \times \Sigma$ where we use the coordinate $t$ on $\mathbb{R}$ and $t$ is related to $r$ by the rule $e^t = r$. If $\Delta$ is the Laplacian on the cone, then we may regard $\Delta$ as a differential operator on the cylinder as well, and the composition $P = e^{2t} \circ \Delta$ can be regarded as an elliptic differential operator 
\begin{align}\label{eqn:P}
P : C_{\beta + 2}^{k+2, \alpha}(\textnormal{Cyl}(\Sigma)) \to C_{\beta + 2}^{k,\alpha}(\textnormal{Cyl}(\Sigma))
\end{align}
which is translation invariant, where the weighted H\"older spaces on the cylinder are exactly those on the cone with the change of variables $r = e^t$.   Away from the exceptional weights, it is known that this operator is actually an isomorphism.

\begin{lemma}\cite{lm85, goto}\label{lem:coneisom}
The operator \eqref{eqn:P} is an isomorphism provided $\beta + 2 \notin \mathcal{P}$. 
\end{lemma}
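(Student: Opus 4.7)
The plan is to exploit the translation invariance of $P$ in the $t$ variable together with the spectral decomposition of $\Delta_\Sigma$ on the compact link $\Sigma$. First I would write out the operator in cylindrical coordinates. Since $\partial_t = r\partial_r$ and $r^2\partial_r^2 = \partial_t^2 - \partial_t$, a direct computation gives
\[
P = e^{2t}\Delta = -\partial_t^2 - (m-2)\partial_t + \Delta_\Sigma,
\]
which is translation invariant in $t$. Plugging a formal mode $e^{\gamma t}\phi$ with $\Delta_\Sigma \phi = \mu \phi$ into $P$ yields the characteristic polynomial $-\gamma^2 - (m-2)\gamma + \mu$, whose roots are precisely the elements of $\mathcal{P}$. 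This identifies $\mathcal{P}$ as the set of indicial roots of $P$.

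Next I would conjugate by the weight. A function $f$ on the cylinder lies in $C_{\beta+2}^{k,\alpha}(\mathrm{Cyl}(\Sigma))$ iff $u := e^{-(\beta+2)t}f$ lies in the corresponding unweighted (bounded-growth) Hölder space on $\mathrm{Cyl}(\Sigma)$. Writing $\delta = \beta + 2$ and $P_\delta u = e^{-\delta t} P(e^{\delta t}u)$, one obtains another translation-invariant operator with principal symbol unchanged and full symbol
\[
\widehat{P_\delta}(\xi) = -(i\xi + \delta)^2 - (m-2)(i\xi + \delta) + \Delta_\Sigma,
\]
acting on Fourier-in-$t$ data valued in functions on $\Sigma$. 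The hypothesis $\delta \notin \mathcal{P}$ is exactly the statement that for every real $\xi$ and every eigenvalue $\mu \ge 0$ of $\Delta_\Sigma$, the scalar $-(i\xi+\delta)^2 - (m-2)(i\xi+\delta) + \mu$ is nonzero; indeed this scalar vanishes only if $i\xi+\delta$ equals an indicial root, forcing $\xi = 0$ and $\delta \in \mathcal{P}$.

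From this point I would construct a two-sided inverse for $P_\delta$. Decomposing any datum $g = \sum_j g_j(t)\phi_j(\sigma)$ in eigenfunctions of $\Delta_\Sigma$ reduces the equation $P_\delta u = g$ to a sequence of constant-coefficient ODEs in $t$; each can be inverted on the Fourier side by dividing by the nonvanishing symbol $\xi^2 + 2i\delta\xi - \delta^2 - (m-2)(i\xi+\delta) + \mu_j$, and the resulting inverse is a convolution with an explicit Green's function $K_j(t)$ (the difference of two decaying exponentials built from the two indicial roots associated to $\mu_j$). Summing in $j$ produces a translation-invariant parametrix whose integral kernel decays exponentially in $|t|$ uniformly in $\sigma$, so that convolution against it maps $C^{k,\alpha}$ into $C^{k+2,\alpha}$ boundedly. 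Conjugating back by $e^{\delta t}$ yields the desired two-sided inverse of $P$ on the weighted Hölder spaces.

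The main obstacle I anticipate is transferring the Fourier-side inverse into Hölder estimates, since Hölder spaces do not interact with the Fourier transform as cleanly as $L^2$ or Sobolev spaces. I would handle this by proving the needed kernel estimates directly: the uniform exponential decay of the Green's function in $t$ ensures boundedness on bounded-growth functions, and the ellipticity of $P$ combined with interior Schauder estimates on cylindrical pieces of length one upgrades $L^\infty$ control to $C^{k+2,\alpha}$ control, in the spirit of the translation-invariant approach of Lockhart–McOwen. Since all of this analysis is local and depends only on the structure of the link $\Sigma$ (which is a compact manifold even when $\mathcal{X}$ is an orbifold, the orbifold points being confined to the compact set $K'$), the arguments of \cite{lm85, goto} carry over without modification.
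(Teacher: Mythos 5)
The paper does not supply a proof of this lemma; it simply cites \cite{lm85, goto}, so there is no internal argument to compare against. Your sketch reproduces the standard Lockhart--McOwen route and is correct in outline: pass to the cylinder where $P = e^{2t}\Delta = -\partial_t^2 - (m-2)\partial_t + \Delta_\Sigma$ is translation invariant, conjugate by $e^{\delta t}$ with $\delta = \beta+2$, decompose along the $\Delta_\Sigma$-eigenbasis, invert each resulting constant-coefficient ODE on $\mathbb{R}$ by convolution with an exponentially decaying Green's kernel (possible precisely because $\delta$ misses every indicial root, i.e.\ $\delta \notin \mathcal{P}$), and upgrade sup-norm control to weighted H\"older control by interior Schauder estimates on cylindrical pieces of fixed length, using translation invariance to make these uniform. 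You have also correctly identified the Fourier-to-H\"older passage as the one genuinely delicate step. Two small points worth tightening: the conjugated symbol should read $\xi^2 - 2i\delta\xi - \delta^2 - (m-2)(i\xi+\delta) + \Delta_\Sigma$ (your $+2i\delta\xi$ is a sign slip from expanding $-(i\xi+\delta)^2$, though it does not affect the nonvanishing criterion), and to make the kernel sum rigorous one should state explicitly that $\inf_j \min\bigl(|\gamma_j^+ - \delta|,\, |\gamma_j^- - \delta|\bigr) > 0$. This is automatic --- only finitely many modes have roots within a fixed distance of $\delta$, and as $\mu_j \to \infty$ the roots $\gamma_j^\pm$ escape to $\pm\infty$, giving faster decay --- but it is exactly what guarantees the mode-summed kernel has uniform exponential decay in $|t|$ and hence defines a bounded operator on the relevant spaces.
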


\medskip

\noindent \emph{Proof of Theorem \ref{thm:fredholm}}. The argument is essentially the same as that in \cite[Section 2]{lm85}: one splits the estimates into those near the apex of the cone and those near infinity. Since we have weighted conical estimates on an orbifold by Theorem \ref{thm:schauder}, we apply these near the apex,  and then combine these estimates with estimates near infinity from \ref{lem:coneisom} to obtain the desired result. 

More precisely, let $R$ be a number so large that $K' \subset \rho^{-1}([1,R])$, and set $X_1 = \rho^{-1}([1,R])$. Let $\varphi_1$ be a smooth cutoff function compactly supported on $X_1$ such that $\varphi_1 \equiv 1$ on $K'$. Set $\varphi_2 = 1 - \varphi_1$.  The Schauder estimates (Theorem \ref{thm:schauder}) give that
\[
\norm{\varphi_1f}_{C^{k+2,\alpha}_{\beta + 2}(\mathcal{X})} \leqslant C(\norm{\Delta (\varphi_1f)}_{C^{k,\alpha}_{\beta}(\mathcal{X})} + \norm{\varphi_1f}_{C^0_{\beta + 2}(\mathcal{X})}). 
\]
The composition $P = \rho^2 \circ \Delta$ is a translation invariant differential operator of order $2$ which is elliptic. For $\beta$ satisfying $\beta + 2 \notin \mathcal{P}$, view $\varphi_2 f$ as a function on the full cylinder $\textnormal{Cyl}(\Sigma)$, and then apply Lemma \ref{lem:coneisom} to the composition $P = \rho^2 \circ \Delta$ to obtain an estimate of the form 
\[
\norm{\varphi_2 f}_{C_{\beta + 2}^{k+2, \alpha}(\mathcal{X})} \leqslant C \norm{\rho^2 \Delta (\varphi_2 f)}_{C^{k,\alpha}_{\beta+2}(\mathcal{X})} = \norm{\Delta (\varphi_2 f)}_{C^{k,\alpha}_{\beta}(\mathcal{X})}
\]
Combining these two inequalities, we find that for $\beta$ satisfying $\beta + 2 \notin \mathcal{P}$, we have 
\begin{align}\label{eqn:compact}
\norm{f}_{C_{\beta + 2}^{k+2, \alpha}(\mathcal{X})} &\leqslant C\left(\norm{\varphi_1 \Delta f}_{C^{k,\alpha}_\beta(\mathcal{X})} + \norm{[\varphi_1, \Delta] f}_{C^{k,\alpha}_\beta(\mathcal{X})}\right. \\
&\left.+ \norm{\varphi_2 \Delta f}_{C^{k,\alpha}_\beta(\mathcal{X})} + \norm{[\varphi_2, \Delta]f}_{C^{k,\alpha}_\beta(\mathcal{X})} + \norm{\varphi_1f}_{C^0_{\beta + 2}(\mathcal{X})}\right) \notag
\end{align}
where $[\varphi_i, \Delta] = \varphi_i\Delta - \Delta \varphi_i$. 

For $\beta$ satisfying $\beta + 2 \notin \mathcal{P}$, define two maps  
\[
A_1, A_2 : C_{\beta+2}^{k+2,\alpha}(\mathcal{X}) \longrightarrow C_{\beta}^{k,\alpha}(\mathcal{X}) \oplus C_{\beta}^{k,\alpha}(\mathcal{X}) \oplus C_{\beta}^{k,\alpha}(\mathcal{X}) \oplus C_{\beta+2}^0(\mathcal{X})
\]
by the assignments
\begin{align*}
A_1(f) &= (\Delta f, -[\varphi_1, \Delta]f, -[\varphi_2, \Delta] f, -\varphi_1 f) \\
A_2(f) &= (0, [\varphi_1, \Delta]f, [\varphi_2, \Delta] f , \varphi_1 f).
\end{align*}
Inequality \eqref{eqn:compact} asserts that $A_1$ has trivial kernel and closed image. If we knew that $A_2$ were a compact mapping, then we could apply Proposition 3.9.7 of \cite{narasimhan} to conclude that $\Delta = A_1 + A_2$ has finite dimensional kernel and closed image. Therefore, it suffices to prove that $A_2$ is compact, which we do presently. 

We give only an argument to show that $[\varphi_1, \Delta]$ is a compact mapping 
\[
[\varphi_1, \Delta] : C_{\beta+2}^{k+2,\alpha}(\mathcal{X}) \to C_{\beta}^{k,\alpha}(\mathcal{X}),
\]
claiming that arguments for $[\varphi_2, \Delta]$ and $\varphi_1$ are similar. Direct computation shows that 
\[
[\Delta, \varphi_1]f = (\Delta \varphi_1) f - \langle \nabla \varphi_1, \nabla f \rangle.
\]
Because $\varphi_1$ is supported only on $X_1$, we conclude that $[\Delta, \varphi_1]f$ is supported only on $X_1$ as well. It follows that if $f_j$ is a sequence bounded in $C_{\beta+2}^{k+2, \alpha}(\mathcal{X})$, then $[\Delta, \varphi_1]f_j$ is a sequence bounded in $C^{k+1, \alpha}(X_1)$. The Arzela-Ascoli theorem applied to the compact $X_1$ ensures that there is a subsequence of $[\Delta, \varphi_1]f_j$ which converges in $C^{k,\alpha}(X_1)$ and hence also in $C_{\beta}^{k,\alpha}(\mathcal{X})$. 

Finally the arguments in \cite[Section 2]{lm85} shows that for $\beta + 2 \notin \mathcal{P}$, the map \eqref{eqn:laplacian} has finite co-dimensional range as well, with only minor suitable adaptations to the orbifold setting.  \hfill $\Box$

\medskip

For a weight $\beta < -2$, the kernel of \eqref{eqn:laplacian} is trivial, so that the Fredholm index of \eqref{eqn:laplacian} is nonpositive for non-exceptional weights $\beta < -2$. 

\begin{lemma}\label{lem:inj}
For $\beta < -2$ satisfying $\beta + 2 \notin \mathcal{P}$, the map \eqref{eqn:laplacian} is injective. 
\end{lemma}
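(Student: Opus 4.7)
The plan is to prove injectivity by a strong maximum principle argument, which is particularly clean here because the weight constraint $\beta+2 < 0$ forces harmonic elements of the domain to vanish at infinity.

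First I would fix $f \in C^{k+2,\alpha}_{\beta+2}(\mathcal{X})$ with $\Delta f = 0$ and observe that, since $\beta + 2 < 0$ and $\rho \geqslant 1$, the definition of the norm yields the pointwise bound
\[
|f(x)| \leqslant \norm{f}_{C^0_{\beta+2}(\mathcal{X})} \, \rho(x)^{\beta + 2},
\]
and hence $f(x) \to 0$ as $x$ leaves every compact subset of $X$. The goal is then to rule out any nonzero harmonic $f$ with this decay.

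Suppose toward a contradiction that $f \not\equiv 0$, and, replacing $f$ by $-f$ if needed, assume $M := \sup_\mathcal{X} f > 0$. Since $f$ vanishes at infinity, the supremum is realized at an interior point $x_0 \in X$. I would now apply the strong maximum principle. At a smooth point this is standard; at an orbifold point $x_0$, I would lift to a local uniformizing chart $(\widetilde{U}, \pi, G)$ about $x_0$, where $\pi^*f$ is a $G$-invariant harmonic function on the smooth manifold $\widetilde{U}$ attaining its maximum at any preimage of $x_0$, and then appeal to the classical strong maximum principle there. Pushing back down, $f \equiv M$ in a neighborhood of $x_0$. Propagating this local constancy along paths via iterated application of the strong maximum principle and invoking connectedness of $\mathcal{X}$, I conclude $f \equiv M$ on all of $\mathcal{X}$; but then the decay $f \to 0$ at infinity forces $M = 0$, a contradiction.

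The only step that requires more than a one-line citation is the verification of the strong maximum principle at orbifold points, but the local uniformizing chart reduces this immediately to the manifold case, so there is no genuine analytic obstacle. I note in passing that the Fredholm hypothesis $\beta + 2 \notin \mathcal{P}$ is not actually used in this argument; it appears in the statement so that the conclusion slots cleanly into the Fredholm framework of Theorem \ref{thm:fredholm} (and, in particular, so that the index of \eqref{eqn:laplacian} is well-defined and nonpositive for such weights).
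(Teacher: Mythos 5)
Your proof is correct, but it takes a genuinely different route from the paper's. The paper argues via the \emph{weak} maximum principle applied to an exhaustion: on each compact sub-level set $T_R = \rho^{-1}([1,R])$, the harmonic function $f$ achieves its maximum on the boundary $S_R$, where it is bounded by $C R^{\beta+2}$; letting $R \to \infty$ and using $\beta + 2 < 0$ shows $\sup_{\mathcal{X}} f \leqslant 0$ (and symmetrically $\inf_{\mathcal{X}} f \geqslant 0$). You instead invoke the \emph{strong} maximum principle at an interior extremum, which you justify on the orbifold by lifting to a local uniformizing chart, and then propagate constancy by a standard open--closed argument. The paper's version is slightly more economical in that it only needs the boundary-maximum form of the maximum principle on compact sets and never has to address whether the supremum is attained or the propagation step; yours is equally valid but carries a bit more machinery. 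Both arguments ultimately reduce the orbifold statement to the smooth case via local charts, so neither has an advantage there. Your closing observation is also accurate and matches the paper: the hypothesis $\beta + 2 \notin \mathcal{P}$ plays no role in the proof of injectivity itself and is present only so that the statement fits into the Fredholm framework of Theorem~\ref{thm:fredholm}.
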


\begin{proof}
Let $f \in C_{\beta + 2}^{k+2,\alpha}(\mathcal{X})$ be such that $\Delta f = 0$. For $R > 1$, let $T_R$ denote the compact subset of $X$ given by $T_R = \rho^{-1}([1,R])$. The restriction of $f$ to $T_R$ is harmonic and so achieves its maximum on the boundary, which we denote by $S_R$. In particular, the function $f$ belongs to $C^0_{\beta+2}(\mathcal{X})$ so that there is a constant $C$ such that 
\[
\max_{T_R}f = \sup_{S_R}f  < CR^{\beta+2}.
\] 
Taking $R \to \infty$ implies that $f$ must be identically zero. 
\end{proof}

\begin{lemma}\label{lem:lapsurj}
For $\beta > -2n$ satisfying $\beta + 2 \notin \mathcal{P}$, the map \eqref{eqn:laplacian} is surjective. 
\end{lemma}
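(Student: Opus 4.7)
The plan is to use the Fredholmness from Theorem \ref{thm:fredholm} together with an $L^2$-pairing duality to identify the cokernel of \eqref{eqn:laplacian} with a kernel of the Laplacian at a shifted weight, and then invoke the maximum-principle argument of Lemma \ref{lem:inj} to show that the latter kernel is trivial. Since \eqref{eqn:laplacian} is Fredholm whenever $\beta + 2 \notin \mathcal{P}$, its image is closed and its cokernel is finite-dimensional, so it suffices to show that the cokernel vanishes.

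First, I would set up the duality. Because the volume form on the asymptotic cone behaves like $r^{2n - 1} dr \, dvol_\Sigma$, the pairing $\langle v, u \rangle = \int_{\mathcal{X}} v u \, dV_g$ converges on $C^0_{\delta}(\mathcal{X}) \times C^0_{\delta'}(\mathcal{X})$ whenever $\delta + \delta' < -2n$. Following the Lockhart-McOwen framework of \cite{lm85} adapted to orbifolds (along the same lines as the proof of Theorem \ref{thm:fredholm}), the cokernel of \eqref{eqn:laplacian} is represented by smooth solutions $u$ of $\Delta u = 0$ lying in the dual-weight space $C^\infty_{-2n - \beta}(\mathcal{X})$; here the weight $-2n - \beta = 2 - m - (\beta + 2)$ arises from the standard shift with $m = 2n$. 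Concretely, if $u$ annihilates the image of $\Delta$ under the $L^2$-pairing, then integration by parts on the compact exhaustion $T_R = \rho^{-1}([1,R])$ gives $\int_{T_R} (\Delta v) u \, dV_g = \int_{T_R} v \Delta u \, dV_g + (\text{boundary terms on } S_R)$, and the weight arithmetic $(\beta + 2) + (-2n - \beta) = 2 - 2n$ forces the boundary terms to vanish as $R \to \infty$; interior orbifold elliptic regularity then upgrades $u$ to $C^\infty_{-2n - \beta}(\mathcal{X})$ with $\Delta u = 0$ classically.

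Second, I would show that this kernel is trivial for $\beta > -2n$. In this range the dual weight satisfies $-2n - \beta < 0$, so any harmonic $u \in C^\infty_{-2n - \beta}(\mathcal{X})$ obeys $|u(x)| \leqslant C \rho(x)^{-2n - \beta}$ and therefore decays to zero at infinity. Repeating the argument of Lemma \ref{lem:inj} verbatim, the maximum principle for the orbifold Laplacian applied to $T_R$ yields
\[
\sup_{T_R} |u| = \sup_{S_R} |u| \leqslant C R^{-2n - \beta} \longrightarrow 0 \quad \text{as } R \to \infty,
\]
so $u \equiv 0$. Hence the cokernel vanishes and \eqref{eqn:laplacian} is surjective.

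The main obstacle will be the first step: rigorously justifying the duality characterization of the cokernel in the weighted H\"older setting on an orbifold---specifically, the claim that cokernel elements are represented by smooth harmonic functions in $C^\infty_{-2n - \beta}(\mathcal{X})$. This is a standard Lockhart-McOwen argument for asymptotically conical manifolds, and its adaptation to orbifolds should require only the same kind of notational modifications used to prove Theorem \ref{thm:fredholm}, together with interior elliptic regularity for distributional harmonic functions near the orbifold points (which is classical). Once this identification is in hand, the surjectivity is immediate from the maximum-principle step.
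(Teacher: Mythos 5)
Your proposal follows essentially the same route as the paper: identify the cokernel via the formal $L^2$-adjoint $\Delta^* = \Delta$ with a kernel at the dual weight $-2n-\beta$, then invoke the maximum-principle argument of Lemma \ref{lem:inj} (valid since $-2n-\beta < 0$ for $\beta > -2n$) to kill that kernel. One technical imprecision to flag: the weight arithmetic $(\beta+2)+(-2n-\beta)=2-2n$ is actually the \emph{borderline} case for the $L^2$-pairing against the volume growth $O(\rho^{2n})$, so the boundary integrals on $S_R$ are only $O(1)$ rather than decaying; the paper circumvents this by first working with the adjoint at weight $-\beta-2n-\epsilon$ for $\epsilon>0$ (where the boundary terms do vanish) and then passing to the limiting weight, citing Marshall's Theorem 6.10 to upgrade ``image contained in the annihilator'' to an equality.
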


\begin{proof}
The formal $L^2$-adjoint $\Delta^*$ is a mapping satisfying 
\[
\int_{\mathcal{X}} (\Delta f) h  \; dV_g  = \int_{\mathcal{X}} f (\Delta^*h) \; dV_g
\]
for compactly supported smooth functions $f,h$ on $\mathcal{X}$. Because the volume of $\mathcal{X}$ behaves as $O(\rho^{2n})$, we see that this identity extends to functions $f \in C_{\beta + 2}^\infty(\mathcal{X})$ and $h \in C_{-\beta -2n - \epsilon}^\infty(\mathcal{X})$ for any $\epsilon > 0$. Two integrations by parts show that we may take $\Delta^* = \Delta$. We also find that the image of $\Delta$ is contained in the subspace perpendicular to the kernel of  
\begin{align}\label{eqn:adjointepsilon}
\Delta : C_{-\beta - 2n-\epsilon}^{k+2, \alpha}(\mathcal{X}) \to C_{-\beta - 2n -2 - \epsilon}^{k,\alpha}(\mathcal{X}) 
\end{align}
for each $\epsilon > 0$. It therefore follows that the image of \eqref{eqn:laplacian} is contained in the subspace perpedicular to the kernel of 
\begin{align}\label{eqn:adjoint}
\Delta : C_{-\beta - 2n}^{k+2, \alpha}(\mathcal{X}) \to C_{-\beta - 2n -2 }^{k,\alpha}(\mathcal{X}).
\end{align} 
The results of \cite[Theorem 6.10]{marshal} extended to the setting of orbifolds imply that the image of \eqref{eqn:laplacian} is actually equal to the subspace perpendicular to the kernel of \eqref{eqn:adjoint}. For  $\beta > -2n$ satisfying $\beta + 2 \notin \mathcal{P}$, the previous lemma implies that the kernel of \eqref{eqn:adjoint} is trivial, so that \eqref{eqn:laplacian} is surjective. 
\end{proof}

\begin{corollary}\label{cor:isom}
For $\beta$ satisfying $-2n < \beta < -2$, the map \eqref{eqn:laplacian} is an isomorphism. 
\end{corollary}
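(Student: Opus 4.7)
The plan is to deduce this corollary directly from the two immediately preceding lemmas together with the remark following Theorem \ref{thm:fredholm}. The key observation is that for $\beta \in (-2n, -2)$, the shifted weight $\beta + 2$ lies in the open interval $(-2n + 2, 0) = (2 - m, 0)$, and the remark after Theorem \ref{thm:fredholm} asserts precisely that $\mathcal{P}$ is disjoint from this interval. Hence the side condition $\beta + 2 \notin \mathcal{P}$ appearing in both Lemma \ref{lem:inj} and Lemma \ref{lem:lapsurj} is automatically satisfied, and no additional verification is needed.

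Once this is in place, I would finish in two sentences. First, since $\beta < -2$ with $\beta + 2 \notin \mathcal{P}$, Lemma \ref{lem:inj} gives injectivity of the map \eqref{eqn:laplacian}. Second, since $\beta > -2n$ with $\beta + 2 \notin \mathcal{P}$, Lemma \ref{lem:lapsurj} gives surjectivity. Taken together, \eqref{eqn:laplacian} is a continuous bijection between the Banach spaces $C^{k+2,\alpha}_{\beta + 2}(\mathcal{X})$ and $C^{k,\alpha}_\beta(\mathcal{X})$, and the open mapping theorem promotes this to a topological isomorphism; continuity of the inverse is in any case automatic from the Schauder estimate in Theorem \ref{thm:schauder} combined with injectivity (a standard argument eliminates the lower order term using a blow-up/compactness argument, but the open mapping theorem is the cleaner route).

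There is essentially no obstacle here; the only conceptual point worth flagging in the write-up is the automatic disjointness of $\beta + 2$ from $\mathcal{P}$, which is what allows the statement of the corollary to omit the caveat $\beta + 2 \notin \mathcal{P}$ that was present in the two input lemmas. All of the analytic content has already been absorbed into Theorem \ref{thm:fredholm}, Lemma \ref{lem:inj}, and Lemma \ref{lem:lapsurj}.
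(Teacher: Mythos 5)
Your proof is correct and matches the route the paper intends: the remark after Theorem~\ref{thm:fredholm} makes the side condition $\beta + 2 \notin \mathcal{P}$ automatic on $(-2n,-2)$, so Lemma~\ref{lem:inj} and Lemma~\ref{lem:lapsurj} give bijectivity, and the open mapping theorem (or equivalently Fredholmness from Theorem~\ref{thm:fredholm}) upgrades this to a Banach space isomorphism. No gaps.
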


\section{Case (ii) of Theorem \ref{thm:main}}\label{sec:continuity}

In this section, we solve equation \eqref{eqn:MA} in the case (ii) (where $-2n < \beta < -2$) by the method of continuity. (In particular, throughout the section, we will assume that $-2n < \beta < -2$.) For a parameter $t \in [0,1]$, consider the one-parameter family of equations 
\begin{align}\label{eqn:*_t}
\begin{cases}
(\omega + \sqrt{-1} \partial \bar{\partial} \varphi)^n = e^{tF} \omega^n \\
\omega + \sqrt{-1} \partial \bar{\partial} \varphi > 0
\end{cases} \tag{$*_t$}.
\end{align}
The equation $(*_0)$ admits the trivial solution $\varphi = 0$. The desired equation we want to solve is equation $(*_1)$. To solve this equation, it suffices to prove the following proposition. 

\begin{proposition}\label{prop:suff}  
\item[(i)] If \eqref{eqn:*_t} admits a smooth solution belonging to $C_{\beta+2}^\infty(\mathcal{X})$ for some $t < 1$, then for all sufficiently small $\epsilon > 0$, the equation $(*_{t + \epsilon})$ admits a smooth solution belonging to $C_{\beta+2}^\infty(\mathcal{X})$ as well. 
\item[(ii)] There is a constant $C > 0$ depending only on $\mathcal{X}, \omega, F,$ and $\alpha$ such that if $\varphi \in C^\infty_{\gamma}(\mathcal{X})$ satisfies \eqref{eqn:*_t} for some $t \in [0,1]$ and some weight $\gamma$ satisfying $\beta + 2 \leqslant \gamma < 0$, then 
\begin{itemize}
\item $\norm{\varphi}_{C^{3,\alpha}_{\beta+2}(\mathcal{X})} \leqslant C$ and
\item $(g_{j\bar{k}} + \partial_j \partial_{\bar{k}}\varphi) > C^{-1} (g_{j\bar{k}})$, where $g_{j\bar{k}}$ are the components of $\omega$ in local coordinates of any chart and the inequality means that the difference of matrices is positive definite. 
\end{itemize}
\end{proposition}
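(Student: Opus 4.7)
The plan is to handle the two parts of Proposition \ref{prop:suff} by quite different techniques: part (i) by a soft Banach implicit function theorem argument built on Corollary \ref{cor:isom}, and part (ii) by a sequence of a priori estimates in weighted Hölder spaces that follow the Joyce/ALE template, with the main technical burden sitting at the weighted $C^0$ step.

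For part (i), fix $t<1$ and a solution $\varphi_t \in C_{\beta+2}^\infty(\mathcal{X})$ of $(*_t)$. I would introduce the nonlinear operator
\[
M(\psi) \;=\; \log\!\left(\frac{(\omega + \sqrt{-1}\ddb \psi)^n}{\omega^n}\right),
\]
defined on the open subset of $\psi \in C_{\beta+2}^{k+2,\alpha}(\mathcal{X})$ for which $\omega + \sqrt{-1}\ddb\psi$ is positive definite, with codomain $C_\beta^{k,\alpha}(\mathcal{X})$. Since $\varphi_t \in C_{\beta+2}^\infty(\mathcal{X})$ and $\beta + 2 < 0$, the metric $\omega_t = \omega + \sqrt{-1}\ddb\varphi_t$ is itself asymptotically conical with the same tangent cone and rate as $\omega$, so Corollary \ref{cor:isom} applies and the Linearization $D_{\varphi_t} M = \tfrac{1}{2}\Delta_{\omega_t} : C_{\beta+2}^{k+2,\alpha}(\mathcal{X}) \to C_\beta^{k,\alpha}(\mathcal{X})$ is an isomorphism. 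The Banach implicit function theorem then produces a solution $\varphi_{t+\epsilon} \in C_{\beta+2}^{k+2,\alpha}(\mathcal{X})$ of $(*_{t+\epsilon})$ for all sufficiently small $\epsilon$, and the weighted Schauder estimates (Theorem \ref{thm:schauder}) applied iteratively to the elliptic equation $\Delta_{\omega_{\varphi_{t+\epsilon}}}\varphi_{t+\epsilon} = \cdots$ (or more efficiently, a bootstrap directly on the Monge-Ampère equation together with the positivity in part (ii)) upgrades this to $C_{\beta+2}^\infty(\mathcal{X})$.

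For part (ii), I would carry out the usual chain of a priori estimates but in weighted norms, treating the decay rate as an additional quantity to be controlled. The steps, in order, are:
\textbf{(a)} a weighted $C^0$ bound $\|\varphi\|_{C^0_{\beta+2}(\mathcal{X})} \leqslant C$, obtained via a Moser iteration / weighted Sobolev inequality argument of Joyce adapted to the orbifold setting, exploiting that the volume of $\mathcal{X}$ grows like $\rho^{2n}$ and that $F \in C_\beta^\infty$ with $\beta < -2$ so $e^{tF}-1 \in L^1 \cap L^\infty$; one compares $\varphi$ to a barrier built from $\Delta^{-1}(e^{tF}-1)$ using the isomorphism of Corollary \ref{cor:isom} (at rate $\gamma = \beta+2 \in (-2n+2, 0)$) and absorbs the nonlinear remainder;
\textbf{(b)} Yau's second order estimate: applying the maximum principle to $\log \operatorname{tr}_\omega \omega_\varphi - A\varphi$ as in the compact Calabi-Yau argument yields an unweighted $C^2$ bound and, simultaneously, the uniform ellipticity statement $(g_{j\bar k} + \partial_j\partial_{\bar k}\varphi) > C^{-1}(g_{j\bar k})$, because the lower bound on $\det(g_{j\bar k} + \partial_j\partial_{\bar k}\varphi)$ comes from $e^{tF}$ and hence from $\|F\|_{C^0}$;
\textbf{(c)} a complex Evans-Krylov step on the fully nonlinear equation gives an (unweighted) $C^{2,\alpha}$ bound on balls of fixed radius;
\textbf{(d)} rescaling these local Evans-Krylov and Schauder estimates on annuli $\{\rho \sim R\}$ (where the geometry is uniformly asymptotic to the cone by Definition \ref{defn:AC}) and combining with the weighted $C^0$ bound produces the weighted $C^{2,\alpha}_{\beta+2}$ estimate;
\textbf{(e)} a final bootstrap: differentiating $(*_t)$ once and applying the weighted Schauder estimate of Theorem \ref{thm:schauder} to the resulting linear elliptic equation in $\partial_j\varphi$ with uniformly elliptic coefficients (by (b)) promotes this to $\|\varphi\|_{C^{3,\alpha}_{\beta+2}(\mathcal{X})}\leqslant C$.

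The main obstacle is unambiguously step (a), the weighted $C^0$ estimate, since the unweighted supremum bound from the compact case gives no decay information whatsoever, and naively applying the maximum principle on exhausting domains $T_R = \rho^{-1}([1,R])$ only recovers boundary values that themselves require the decay we are trying to establish. The resolution is to use the surjectivity of the Laplacian at the weight $\beta$ (Lemma \ref{lem:lapsurj}, via Corollary \ref{cor:isom}) to write $F = \Delta u$ for some $u \in C_{\beta+2}^{k+2,\alpha}(\mathcal{X})$ with $\|u\|_{C^{k+2,\alpha}_{\beta+2}} \leqslant C\|F\|_{C^{k,\alpha}_\beta}$, then control $\varphi - tu$ rather than $\varphi$; the nonlinear error $\log(\omega_\varphi^n/\omega^n) - \Delta\varphi$ is quadratic in $\ddb\varphi$ and, once combined with the assumption $\varphi \in C^\infty_\gamma$ with $\gamma \geqslant \beta+2$, is absorbed by a contraction/Moser iteration argument independent of $\gamma$. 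Once step (a) is in hand, steps (b)-(e) are essentially local and standard, and the constant $C$ depends only on $\mathcal{X}, \omega, F, \alpha$ as required.
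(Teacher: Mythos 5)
Your treatment of part (i) matches the paper's: both apply the Banach implicit function theorem with linearization equal to the Laplacian of $\omega_t$, invoking Corollary \ref{cor:isom} and then bootstrapping via Theorem \ref{thm:schauder}. Your extra observation that $\omega_t = \omega + \sqrt{-1}\ddb\varphi_t$ is itself asymptotically conical with the same rate is a useful detail the paper leaves implicit.

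For part (ii), however, your ordering of the estimates has a circularity that must be repaired. You place the weighted $C^0_{\beta+2}$ bound as step (a), before the $C^2$ and $C^{2,\alpha}$ estimates, claiming the nonlinear remainder in $\Delta(\varphi - tu) = Q(\sqrt{-1}\ddb\varphi)$ can be ``absorbed by a contraction/Moser iteration argument independent of $\gamma$.'' But that quadratic remainder $Q$ is pointwise comparable to $|\sqrt{-1}\ddb\varphi|^2$, so bounding it at any decay rate requires a priori control of two derivatives of $\varphi$ in a weighted norm. Two derivatives of $\varphi$ in a weighted norm is exactly what steps (b)--(d) are supposed to supply; you cannot use them to establish step (a). Moreover, the weighted Moser iteration itself (Joyce's Proposition~8.6.8) only runs for weights $\gamma > -1$: the extra term $\int |\varphi|^p \rho^{q-2}\,dV$ introduced by the weight has the wrong sign, and its coefficient only stays under control when the weight is sufficiently close to zero. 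So a direct Moser argument cannot reach $\beta+2$ when $\beta < -3$. The paper's actual route is: (1) an \emph{unweighted} $C^0$ bound by Moser iteration; (2) unweighted $C^2$, $C^3$, $C^{k,\alpha}$ bounds; (3) a weighted $C^0_\gamma$ bound only for $\gamma > -1$ (Proposition~\ref{prop:C^0_gamma}); (4) the implication $C^0_\gamma \Rightarrow C^{k,\alpha}_\gamma$ (Proposition~\ref{prop:pass}); and then (5) the iterative weight improvement $\gamma \mapsto \max\{2\gamma-2,\,\beta+2\}$ (Lemma~\ref{lem:C^0_gamma}), which uses $C^{k,\alpha}_\gamma$ control of $|\sqrt{-1}\ddb\varphi|^2 = O(\rho^{2\gamma-4})$ together with the isomorphism of Corollary~\ref{cor:isom}. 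Several iterations of (4)--(5) are needed in general before one lands at $\beta+2$; this iteration cannot be collapsed to a single comparison with $\Delta^{-1}(e^{tF}-1)$. Your choices of Evans--Krylov for $C^{2,\alpha}$ and rescaled interior estimates on dyadic annuli for the weighted upgrade are perfectly legitimate alternatives to the paper's $C^3$-via-connection-difference and Joyce's Theorem~8.6.11, but they slot into the corrected order, not the one you propose.
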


Indeed, Proposition \ref{prop:suff} is sufficient because we can obtain a solution to $(*_1)$ belonging to $C_{\beta+2}^\infty(\mathcal{X})$ using the following lemma. 

\begin{lemma}
Assume Proposition \ref{prop:suff}. Then if $s$ is a number in $(0,1]$ such that we can solve \eqref{eqn:*_t} for all $t < s$, then we can solve $(*_s)$ for a smooth function $\varphi$ belonging to $C_{\beta+2}^\infty(\mathcal{X})$. 
\end{lemma}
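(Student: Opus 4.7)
The plan is a standard closedness step in the continuity method: pick a sequence $t_j \nearrow s$, extract a convergent subsequence from the corresponding solutions $\varphi_j$ of $(*_{t_j})$, pass to the limit in the Monge-Amp\`ere equation, and then bootstrap regularity and decay.

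First I would fix a sequence $t_j \nearrow s$ with $t_j < s$ and, by the standing hypothesis, choose solutions $\varphi_j \in C^\infty_{\beta+2}(\mathcal{X})$ of $(*_{t_j})$. Applying Proposition~\ref{prop:suff}(ii) with $\gamma = \beta+2$ (which lies in $[\beta+2,0)$ since $\beta < -2$) yields the uniform bound $\norm{\varphi_j}_{C^{3,\alpha}_{\beta+2}(\mathcal{X})} \leqslant C$ together with the uniform chart-by-chart lower bound $(g_{j\bar k} + \partial_j \partial_{\bar k}\varphi_j) > C^{-1}(g_{j\bar k})$. Choosing $\alpha' \in (0, \alpha)$ and $\beta'' \in (\beta, 0)$, the compact embedding of Theorem~\ref{thm:AA} lets me pass to a subsequence with $\varphi_j \to \varphi$ in $C^{3,\alpha'}_{\beta''+2}(\mathcal{X})$. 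Lower semicontinuity of the $C^{3,\alpha}_{\beta+2}$ norm under this weaker convergence places $\varphi$ in $C^{3,\alpha}_{\beta+2}(\mathcal{X})$; and because the $C^{3,\alpha'}_{\beta''+2}$ convergence is uniform in $C^3$ on compact sets and controls the decay at infinity, I can pass to the pointwise limit in $(*_{t_j})$ to conclude that $\varphi$ satisfies $(*_s)$. The lower bound on $g + \ddb \varphi_j$ also passes to the limit, so $\omega_\varphi := \omega + \sqrt{-1}\,\ddb\varphi$ is a genuine K\"ahler metric uniformly equivalent to $\omega$.

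It remains to upgrade $\varphi$ from $C^{3,\alpha}_{\beta+2}$ to $C^\infty_{\beta+2}$. Differentiating $\log\det(g_{j\bar k} + \varphi_{j\bar k}) = sF + \log\det g_{j\bar k}$ in any coordinate chart produces a linear equation of the form $g_\varphi^{\bar k j}\,\partial_j \partial_{\bar k}(\partial_\ell \varphi) = \Psi_\ell$, where $\Psi_\ell$ is built from $\partial_\ell F$ and from $(g^{\bar k j} - g_\varphi^{\bar k j})\,\partial_\ell g_{j\bar k}$. Since $\varphi \in C^{3,\alpha}_{\beta+2}$ with $\beta < -2$, the metric $g_\varphi$ differs from $g$ by a tensor of weight at most $\beta$, and so is itself asymptotically conical with the same tangent cone; the weighted Schauder estimates of Theorem~\ref{thm:schauder} therefore apply to the linearized operator. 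Tracking weights gives $\Psi_\ell \in C^{1,\alpha}_{\beta-1}(\mathcal{X})$, whence $\partial_\ell \varphi \in C^{3,\alpha}_{\beta+1}(\mathcal{X})$, and therefore $\varphi \in C^{4,\alpha}_{\beta+2}(\mathcal{X})$. Iterating the bootstrap produces $\varphi \in C^\infty_{\beta+2}(\mathcal{X})$, as required.

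The main obstacle I expect lies in this last step: the bootstrap must preserve the \emph{weighted} decay rate $\beta+2$ rather than merely produce smoothness. This forces one to invoke the weighted Schauder estimate of Theorem~\ref{thm:schauder} (not merely a local interior Schauder bound) at every stage and to carefully account for the weights of each term appearing after successive differentiation of the Monge-Amp\`ere equation, using in an essential way that $\omega_\varphi$ remains asymptotically conical so that the coefficients of the linearized operator are themselves controlled in the weighted spaces.
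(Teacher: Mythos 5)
Your overall strategy mirrors the paper's exactly: choose $t_j \nearrow s$, invoke Proposition~\ref{prop:suff}(ii) for a uniform $C^{3,\alpha}_{\beta+2}$ bound and the uniform metric lower bound, extract a subsequence via the compact embedding of Theorem~\ref{thm:AA}, pass to the limit in the Monge--Amp\`ere equation, verify positivity of $\omega + \sqrt{-1}\,\ddb\varphi$, and finally bootstrap. (Incidentally, your inequalities $\alpha' < \alpha$, $\beta'' > \beta$ are the correct direction for the compact embedding; the paper's text appears to have the inequalities reversed at that point.) Where you diverge is in the bootstrap step. The paper first runs a purely local, \emph{unweighted} Schauder bootstrap on the differentiated Monge--Amp\`ere equation, which proves smoothness but says nothing about decay, and then separately re-invokes Proposition~\ref{prop:suff}(ii) (relying internally on Proposition~\ref{prop:pass}, the weighted bootstrap) to place $\varphi$ in $C^\infty_{\beta+2}$. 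You instead run a single \emph{global weighted} bootstrap using Theorem~\ref{thm:schauder}, obtaining smoothness and decay simultaneously. Both routes work; yours is a bit more direct, while the paper's modular version reuses machinery already proved. One caveat in your version worth flagging, beyond the acknowledged weight-tracking: the object $\partial_\ell\varphi$ (and the coefficient $\partial_\ell g_{j\bar k}$) is a coordinate quantity, not a weighted tensor, so to apply the global weighted Schauder estimate of Theorem~\ref{thm:schauder} you should differentiate covariantly with $\nabla$, which produces a curvature commutator term $\mathrm{Rm} * \nabla\varphi = O(\rho^{\beta-1})$ that must be absorbed into $\Psi_\ell$; this does land in $C^{1,\alpha}_{\beta-1}$, so the conclusion survives, but it is a step that needs to be made explicit. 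The paper's local-then-global arrangement avoids this issue because its local bootstrap is in ordinary (unweighted) H\"older spaces where coordinate derivatives are harmless, and the passage to weighted decay is delegated wholesale to Propositions~\ref{prop:suff}(ii) and~\ref{prop:pass}.
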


\begin{proof}
Let $t_i \in (0,1)$ be a sequence of numbers approaching $s$ from below. By assumption this gives rise to a sequence of smooth functions $\varphi_i$ satisfying 
\[
(\omega + \sqrt{-1} \ddb \varphi_i)^n = e^{t_iF}\omega^n.
\]
Proposition \ref{prop:suff}(i) and Theorem \ref{thm:AA} imply that after passing to a subsequence, we may assume that the $\varphi_i$ converge in $C^{3,\alpha'}_{\beta' + 2}$ to a function $\varphi$ for some $\alpha' > \alpha$ and $\beta' < \beta$. This convergence is strong enough that $\varphi$ satisfies the limiting equation 
\[
(\omega + \sqrt{-1}\ddb \varphi)^n = e^{sF} \omega^n.
\]
Proposition \ref{prop:suff}(ii) gives that the forms $\omega + \sqrt{-1}\ddb \varphi_i$ are bounded below by a fixed positive form $C^{-1}\omega$, and hence $\omega + \sqrt{-1} \ddb \varphi$ is a positive form. If we knew that $\varphi$ were smooth, then we could apply Proposition \ref{prop:suff}(i) to obtain that $\varphi$ belongs to $C^{\infty}_{\beta+2}(\mathcal{X})$. It thus remains to show that $\varphi$ is smooth. 

We show $\varphi$ is smooth by a standard local bootstrapping argument. In local coordinates, we find that $\varphi$ satisfies 
\[
\log \det(g_{j\bar{k}} + \partial_j \partial_{\bar{k}}\varphi) - \log \det (g_{j \bar{k}}) - sF = 0.
\]
Differentiating the equation with respect to the variable $z^\ell$ we have 
\[
(g_\varphi)^{j\bar{k}} \partial_j \partial_{\bar{k}} (\partial_\ell \varphi) =  s \partial_\ell F +  \partial_{\ell} \log \det (g_{j\bar{k}}) - (g_\varphi)^{j\bar{k}} \partial_\ell g_{j \bar{k}}
\]
where $(g_\varphi)^{j\bar{k}}$ is the inverse of the matrix $(g_\varphi)_{j\bar{k}} = g_{j\bar{k}} + \partial_j \partial_{\bar{k}} \varphi$. We think of this equation as a linear elliptic second-order equation $\Delta_\varphi(\partial_{\ell}\varphi) = h$ for the function $\partial_{\ell}\varphi \in C^{2,\alpha'}(\mathcal{X})$. Because the function $h$ belongs to $C^{1,\alpha'}$, we conclude from ordinary (unweighted) Schauder estimates that $\partial_\ell\varphi$ belongs to $C^{3,\alpha'}$. Because $\ell$ was arbitrary, it follows that $\varphi$ belongs to $C^{4,\alpha'}$. Repeating this argument we obtain that $\varphi \in C^{5,\alpha'}$ and by induction, that $\varphi$ is actually smooth. 
\end{proof}

Let us now prove the first part of Proposition \ref{prop:suff}. 

\medskip

\noindent \emph{Proof of Proposition \ref{prop:suff} (i)}. Let $B_1$ denote the Banach manifold consisting of those $\varphi \in C^{3,\alpha}_{\beta+2}(\mathcal{X})$ such that $\omega + \sqrt{-1} \partial \bar{\partial}\varphi$ is a positive form. Let $B_2$ denote the Banach space $B_2 =  C^{1,\alpha}_{\beta}(\mathcal{X})$. Define a mapping 
\begin{align*}
G : B_1 \times [0,1] &\longrightarrow  B_2 \\
(\varphi, s) &\longmapsto \log \frac{(\omega + \sqrt{-1} \partial \bar{\partial}\varphi)^n}{\omega^n} - sF.
\end{align*}
By assumption, we are given a smooth function $\varphi_t$ belonging to $C_{\beta+2}^\infty(\mathcal{X})$ such that $G(\varphi_t, t) = 0$ and $\omega + \sqrt{-1} \partial \bar{\partial} \varphi$ is a K\"ahler form. The partial derivative of $G$ in the direction of $\varphi$ at the point $(\varphi_t, t)$ is given by 
\[
DG_{(\varphi_t, t)}(\psi, 0) = \frac{n \sqrt{-1} \partial \bar{\partial} \psi \wedge \omega_t^{n-1}}{\omega_t^n} = \Delta_t \psi,
\]
where $\omega_t = \omega + \sqrt{-1} \ddb \varphi_t$ and $\Delta_t$ denotes the Laplacian with respect to $\omega_t$. Corollary \ref{cor:isom} gives that $\Delta_t$ is an isomorphism 
\[
\Delta_t : C_{\beta+ 2}^{3,\alpha}(\mathcal{X}) \to C^{1,\alpha}_{\beta}(\mathcal{X}).
\] 
The implicit function theorem implies that for $s$ sufficiently close to $t$, there are functions $\varphi_s$ in $C_{\beta+2}^{3,\alpha}(\mathcal{X})$ satisfying $(G(\varphi_s), s) = 0$. Because $\omega + \sqrt{-1}\ddb \varphi_t$ is a positive form, for $s$ close enough to $t$, we can ensure that each $\omega + \sqrt{-1}\ddb\varphi_s$ is a positive form as well. Moreover, bootstrapping arguments similar to those described earlier show that $\varphi_s$ is actually smooth. \hfill $\Box$

\medskip

It remains to prove Proposition \ref{prop:suff} (ii). We do this in the next section. 

\section{A priori estimates}\label{sec:apriori}

This section is devoted to proving Proposition \ref{prop:suff} (ii). In particular, we are still assuming that $-2n < \beta < -2$. While the proof is analogous to the compact setting (c.f. \cite{faulk}), there are a few main differences:

\begin{enumerate}
\item[(i)] Stokes' Theorem cannot be applied directly since our orbifold is not compact.
\item[(ii)] An a priori $L^2$-bound is replaced by an a priori $L^{p_0}$-bound for some $p_0 > $ (see Lemma \ref{lem:p_0}). 
\item[(iii)] Our bootstrapping arguments (which use weighted Schauder estimates) require a weighted $C^0_{\beta+2}$-estimate on solutions $\varphi$. 
\item[(iv)] Finally, the desired result is a \emph{weighted} $C^{k,\alpha}_{\beta+2}$-estimate, which we show follows from an unweighted $C^{k,\alpha}$-estimate (see Proposition \ref{prop:pass}) as is obtained in the compact setting. 
\end{enumerate}

Our methods in this section follow closely those of Joyce in \cite{joyce} with modifications motivated by a streamlined approach in the compact setting presented in \cite{faulk}, which was influenced strongly by the presentation of \cite{szekelyhidi14}.
 
Throughout, let us fix a weight $\gamma$ satisfying $\beta + 2 \leqslant \gamma$. We will be assuming that our solution $\varphi$ belongs to the weighted H\"older space $C_{\gamma}^\infty(\mathcal{X})$. Our goal is to obtain a priori estimates on $\varphi$. 
 
\subsection{A $C^0$-estimate}

\begin{lemma}\label{lem:est1}
For $p > (2-2n)/\gamma$, any smooth solution $\varphi \in C_\gamma^\infty(\mathcal{X})$ to \eqref{eqn:*_t} satisfies 
\[
\int_{\mathcal{X}} \left|\nabla |\varphi|^{p/2}\right|^{2}_g dV_g \leqslant \frac{np^2}{4(p-1)} \int_{\mathcal{X}} (1 - e^{F_t})\varphi |\varphi|^{p-2} dV_g.
\]
\end{lemma}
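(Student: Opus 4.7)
The approach is to test the Monge-Amp\`ere equation against $|\varphi|^{p-2}\varphi$ and integrate by parts, using the positivity of a telescoping sum of $(n-1,n-1)$-forms. Rewrite $(*_t)$ as $\omega_\varphi^n - \omega^n = (e^{F_t}-1)\omega^n$, where $\omega_\varphi := \omega + \sqrt{-1}\ddb\varphi$, and expand via the identity
\[
\omega_\varphi^n - \omega^n = \sqrt{-1}\,\ddb\varphi \wedge T, \qquad T := \sum_{j=0}^{n-1}\omega_\varphi^{\,j}\wedge\omega^{n-1-j}.
\]
Each summand of $T$ is closed and semi-positive, so $T \geqslant \omega^{n-1}$ pointwise. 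Multiply by $|\varphi|^{p-2}\varphi$ and integrate against a cutoff $\chi_R$ equal to $1$ on $\{\rho\leqslant R\}$, vanishing on $\{\rho\geqslant 2R\}$, with $|\nabla\chi_R|_g \leqslant C/R$. On the compact support of $\chi_R$, integration by parts gives
\[
\int_{\mathcal{X}}\chi_R|\varphi|^{p-2}\varphi(1-e^{F_t})\,\omega^n = (p-1)\int_{\mathcal{X}}\chi_R|\varphi|^{p-2}\,\sqrt{-1}\,\partial\varphi\wedge\bar\partial\varphi \wedge T + E_R,
\]
where $E_R$ collects the terms involving derivatives of $\chi_R$ and is supported in the annulus $\{R\leqslant\rho\leqslant 2R\}$.

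The crucial step is showing $E_R \to 0$ as $R\to\infty$. On the annulus, $|\varphi|=O(R^\gamma)$, $|\nabla\varphi|_g=O(R^{\gamma-1})$, and $T$ is uniformly bounded since $\omega_\varphi\to\omega$ at infinity (as $\gamma<0$). Because the annulus has volume $O(R^{2n})$, the pointwise integrand of $E_R$ is controlled by $|\varphi|^{p-1}|\nabla\varphi|_g\,|\nabla\chi_R|_g = O(R^{p\gamma-2})$, giving $|E_R|=O(R^{p\gamma+2n-2})$. This tends to zero precisely when $p\gamma < 2-2n$, which, since $\gamma<0$, is equivalent to the hypothesis $p > (2-2n)/\gamma$.

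Passing $R\to\infty$ and discarding all but the $j=0$ summand of $T$ (using $\sqrt{-1}\partial\varphi\wedge\bar\partial\varphi \wedge T \geqslant \sqrt{-1}\partial\varphi\wedge\bar\partial\varphi\wedge\omega^{n-1}$ pointwise) yields
\[
(p-1)\int_{\mathcal{X}}|\varphi|^{p-2}\,\sqrt{-1}\,\partial\varphi\wedge\bar\partial\varphi \wedge \omega^{n-1} \leqslant \int_{\mathcal{X}}|\varphi|^{p-2}\varphi(1-e^{F_t})\,\omega^n.
\]
Combining the K\"ahler identity $n\sqrt{-1}\,\partial\varphi\wedge\bar\partial\varphi\wedge\omega^{n-1} = \tfrac{1}{2}|\nabla\varphi|_g^2\,\omega^n$ with the chain rule $|\nabla|\varphi|^{p/2}|_g^2 = \tfrac{p^2}{4}|\varphi|^{p-2}|\nabla\varphi|_g^2$ converts the left-hand side to $\tfrac{2(p-1)}{np^2}\int_{\mathcal{X}}|\nabla|\varphi|^{p/2}|_g^2\,dV_g$, producing the claimed constant $np^2/(4(p-1))$ upon rearrangement.

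The main obstacle is the decay estimate on $E_R$, which is where the hypothesis $p > (2-2n)/\gamma$ enters: one must verify that every commutator term surviving from the cutoff integration by parts decays uniformly at the rate $R^{p\gamma + 2n - 2}$, including contributions where the derivative lands on different summands of $T$. A secondary technicality is that $|\varphi|^{p-2}$ is singular on the zero set of $\varphi$ when $p<2$, which is resolved in the standard manner by replacing $|\varphi|$ with $(\varphi^2+\delta)^{1/2}$ and sending $\delta\to 0^+$ at the end.
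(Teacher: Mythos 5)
Your proposal is correct and follows essentially the same route as the paper: test the Monge--Amp\`ere equation against $|\varphi|^{p-2}\varphi$ via the telescoping identity $\omega_\varphi^n - \omega^n = \sqrt{-1}\,\ddb\varphi\wedge T$, integrate by parts, observe that the boundary/error term is $O(R^{p\gamma+2n-2})$ and vanishes precisely when $p > (2-2n)/\gamma$, then discard the nonnegative summands $j\geqslant 1$ of $T$ and apply the chain rule to land on $|\nabla|\varphi|^{p/2}|^2$. The only cosmetic difference is that the paper applies Stokes' theorem directly on the sublevel set $T_R=\rho^{-1}([1,R])$ and estimates the resulting boundary integral, whereas you use a smooth cutoff $\chi_R$ and estimate the commutator term $E_R$; these yield the identical decay exponent and the same conclusion.
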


\begin{proof}
For sufficiently large $R$, if $T_R = \{x \in X : \rho(x) \leqslant R\}$, then by Stokes' Theorem, we have 
\begin{align*}
&\int_{T_R} d\left[\varphi |\varphi|^{p-2} d^c \varphi \wedge(\omega^{n-1} + \omega^{n-2} \wedge  \omega_{\varphi} + \cdots + \omega_\varphi^{n-1}) \right] \\
&= \int_{\partial T_R} \varphi |\varphi|^{p-2} d^c \varphi \wedge (\omega^{n-1} + \omega^{n-2} \wedge \omega_{\varphi} + \cdots + \omega_\varphi^{n-1}).
\end{align*}
Since $\varphi \in C^{\infty}_{\gamma}(\mathcal{X})$, on the boundary $\partial S_R$, we have that $\varphi = O(R^{\gamma}), d^c\varphi = O(R^{\gamma-1})$, and $\omega, \omega_\varphi = O(1)$. We also have that $\text{vol}(\partial S_R) = O(R^{2n-1})$. It follows that the right-hand side of the equality is $O(R^{p\gamma + 2n -2})$, where, by assumption on $p$, we have $p\gamma+ 2n -2 < 0$. Taking the limit as $R \to \infty$ gives that 
\[
\int_\mathcal{X} d\left[\varphi |\varphi|^{p-2} d^c \varphi \wedge(\omega^{n-1} + \omega^{n-2} \wedge \omega_{\varphi} + \cdots + \omega_\varphi^{n-1}) \right] = 0.
\]
Expanding the integrand gives the equation 
\begin{align*}
\int_{\mathcal{X}} \varphi|\varphi|^{p-2} (1- e^{F_t}) \omega^n = (p-1) \int_{\mathcal{X}} |\varphi|^{p-2} d\varphi \wedge d^c \varphi \wedge (\omega^{n-1} + \cdots + \omega_\varphi^{n-1}).
\end{align*}
Each term on the right is positive so that 
\[
\int_{\mathcal{X}} \varphi|\varphi|^{p-2} (1- e^{F_t}) \omega^n \geqslant (p-1) \int_{\mathcal{X}} |\varphi|^{p-2} d\varphi \wedge d^c \varphi \wedge \omega^{n-1}.
\]
But the right-hand side is 
\[
(p-1) \int_{\mathcal{X}} |\varphi|^{p-2} d\varphi \wedge d^c \varphi \wedge \omega^{n-1} = \frac{4(p-1)}{np^2} \int_{\mathcal{X}} \left|\nabla |\varphi|^{p/2}\right|^{2}_g \omega^n ,
\]
as we desire. 
\end{proof}

The Sobolev inequality is a statement that holds on both compact and non-compact manifolds and can be extended to asymoptotically conical orbifolds by a type of local argument which uses the estimate on each chart. In particular, if $U_i$ is a finite covering of $\Sigma$ by charts, then we can form a finite covering of $\mathcal{X}$  by taking a finite covering by charts of the compact $K'$ together with the finite collection of charts $U_i \times (0,\infty)$ on $C(\Sigma)$. The Sobolev inequality holds on each of these charts, so using a partition of unity argument we obtain a global Sobolev inequality, stated precisely below. (See also Corollary 1.3 of \cite{hein:sobolev}, which contains a more general treatment of Sobolev inequalities on non-compact manifolds.)

\begin{lemma}[Sobolev inequality]\label{lem:sobolev}
For $n \geqslant 2$, let $\tau = \frac{n}{n-1}$. There is a constant $C > 0$ depending on $(\mathcal{X},g)$ such that if $\varphi$ belongs to $L_1^2(\mathcal{X})$, then 
\[
\norm{\varphi}_{L^{2\tau}} \leqslant C \norm{\nabla \varphi}_{L^2}.
\] 
\end{lemma}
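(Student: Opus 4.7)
The plan is to prove the inequality by a localization argument, along the lines indicated in the paragraph preceding the statement. First I would fix a cover of $\mathcal{X}$ in two parts: a neighborhood of the compact set $K'$ covered by finitely many orbifold charts $(\widetilde{U}_\gamma, \pi_\gamma, G_\gamma)$, and the asymptotic end decomposed into dyadic annular pieces $A_k = \rho^{-1}([2^{k-1}, 2^{k+1}])$ for $k \geqslant 1$, each pulled back via the diffeomorphism $\Phi$ to the model cone $C(\Sigma)$. Choose a smooth partition of unity $\{\chi_\gamma\} \cup \{\chi_k\}$ subordinate to this cover, with the important property $|\nabla \chi_k|_g \leqslant C\rho^{-1}$ and uniformly finite overlap.

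On each orbifold chart near $K'$, the classical Euclidean Sobolev inequality applies to $G_\gamma$-invariant lifts and descends to the quotient with constants differing only by factors of $|G_\gamma|^{1/(2\tau)}$. On each annular piece $A_k$, I would use the fact that, on the model cone, the dilation $(r,\sigma) \mapsto (2^k r, \sigma)$ carries $A_0 = \rho^{-1}([1/2, 2])$ to $A_k$ while rescaling the metric by $2^{2k}$; the Sobolev inequality with critical exponent $2\tau = \tfrac{2m}{m-2}$ in real dimension $m = 2n$ is invariant under this rescaling, so the inequality on the fixed compact annulus $A_0$ transfers with the \emph{same} constant to each $A_k$. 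The asymptotically conical condition $\Phi^* g - g_0 = O(r^{\lambda_g})$ with $\lambda_g < 0$ ensures that working with $g$ instead of $g_0$ changes the relevant constants only by a bounded multiplicative factor, uniformly in $k$.

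Applying these local estimates to each $\chi_\gamma \varphi$ and $\chi_k\varphi$, and then recombining via a finite-overlap power-mean inequality, yields an intermediate bound of the form
\[
\norm{\varphi}_{L^{2\tau}(\mathcal{X})}^2 \leqslant C\bigl(\norm{\nabla \varphi}_{L^2(\mathcal{X})}^2 + \norm{\rho^{-1}\varphi}_{L^2(\mathcal{X})}^2\bigr),
\]
where the second term comes from the cross-terms $\varphi\,\nabla \chi_k$ when distributing the gradient across the product $\chi_k \varphi$. The main obstacle is then to absorb the weighted $L^2$ term into $\norm{\nabla \varphi}_{L^2}^2$, which I would do via Hardy's inequality $\norm{\rho^{-1}\varphi}_{L^2}^2 \leqslant C \norm{\nabla \varphi}_{L^2}^2$ on the asymptotically conical orbifold; this is valid because the real dimension $m = 2n \geqslant 4$ (and is trivially bounded on the compact part since $\rho \geqslant 1$). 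Alternatively, and perhaps more expediently, one can invoke Corollary 1.3 of \cite{hein:sobolev} directly: its proof applies to any complete non-compact manifold of Euclidean volume growth, and the orbifold case follows by averaging over local isotropy groups, which preserves every relevant constant up to factors depending only on the maximal group order.
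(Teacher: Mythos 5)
Your proof is correct and follows the same localize-and-patch philosophy the paper sketches in the paragraph before the lemma, but with a different decomposition of the end: the paper proposes a finite cover by angular wedges $U_i \times (0,\infty)$ with $\{U_i\}$ a finite chart cover of $\Sigma$, whereas you use a countable dyadic annular decomposition and exploit the scale-invariance of the critical-exponent inequality $\norm{\varphi}_{L^{2m/(m-2)}} \leqslant C\norm{\nabla\varphi}_{L^2}$ in real dimension $m = 2n$ to get a single uniform constant on every annulus. Both schemes use the asymptotically conical condition $\Phi^*g - g_0 = O(r^{\lambda_g})$ in the same way to compare $g$ and $g_0$ uniformly. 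The genuine substance your argument supplies, and which the paper's two-sentence sketch elides, is the treatment of the cross terms $\varphi\,\nabla\chi$ from the partition of unity: since $|\nabla\chi| = O(\rho^{-1})$, these contribute $\norm{\rho^{-1}\varphi}_{L^2}^2$ after patching, and this must be absorbed into $\norm{\nabla\varphi}_{L^2}^2$ by a Hardy (weighted Poincar\'e) inequality, valid here because $\dim_{\mathbb{R}}\mathcal{X} = 2n \geqslant 4$. That is precisely the estimate the paper later invokes (as an orbifold version of Theorem 1.2(ii) of \cite{hein:sobolev}) in the proof of Lemma \ref{lem:laplaceexis}, so your use of it is internally consistent with the paper's framework; and your fallback of citing Corollary 1.3 of \cite{hein:sobolev} duplicates the paper's own parenthetical appeal to that same result. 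In short, yours is a more detailed and self-contained rendering of the argument the paper compresses into a sketch and a citation.
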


With the previous two results, one can obtain a uniform $L^{p_0}$-estimate: 

\begin{lemma}[An $L^{p_0}$-estimate]\label{lem:p_0}
There are constants $C > 0$ and $p_0$ larger than $\max\{(2-2n)/\gamma, 3n/2\}$ such that any solution $\varphi \in C^\infty_{\gamma}(\mathcal{X})$ to \eqref{eqn:*_t}  satisfies 
\[
\norm{\varphi}_{L^{p_0}} \leqslant C.
\]
\end{lemma}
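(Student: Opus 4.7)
The plan is to close a one-step Moser iteration on itself by choosing the H\"older exponent so that both sides involve the same $L^{p_0}$-norm, which can then be cancelled. Fix $p_0$ sufficiently large, in particular larger than $\max\{(2-2n)/\gamma,\, 3n/2,\, 2n/(|\beta|-2)\}$, and set $p = p_0/\tau$ where $\tau = n/(n-1)$. The first condition ensures that $p > (2-2n)/\gamma$ so Lemma \ref{lem:est1} applies, and also guarantees that $\varphi \in L^{p_0}(\mathcal{X})$ a priori, since $|\varphi| = O(\rho^\gamma)$ and the volume form on the cone grows like $r^{2n-1}\,dr$. The third condition will play a crucial role momentarily.

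Combining Lemma \ref{lem:est1} with the Sobolev inequality (Lemma \ref{lem:sobolev}) applied to $|\varphi|^{p/2}$ yields
\[
\norm{\varphi}_{L^{p_0}}^{p} = \norm{|\varphi|^{p/2}}_{L^{2\tau}}^{2} \leqslant C_S^2 \int_{\mathcal{X}} \left|\nabla |\varphi|^{p/2}\right|^{2}_g dV_g \leqslant \frac{C_S^2\, n p^2}{4(p-1)} \int_{\mathcal{X}} |1-e^{tF}|\, |\varphi|^{p-1}\, dV_g.
\]
I would then apply H\"older's inequality with conjugate exponents $a = p_0/(p_0-p+1)$ and $a' = p_0/(p-1)$, chosen so that the resulting norm on $\varphi$ is again $L^{p_0}$:
\[
\int_{\mathcal{X}} |1-e^{tF}|\, |\varphi|^{p-1}\, dV_g \leqslant \norm{1-e^{tF}}_{L^a}\, \norm{\varphi}_{L^{p_0}}^{p-1}.
\]
Since $F$ is fixed and bounded, the elementary estimate $|1-e^{y}| \leqslant e^{|y|}|y|$ gives $|1-e^{tF}| = O(\rho^\beta)$ with constants uniform in $t \in [0,1]$, so $\norm{1-e^{tF}}_{L^a}$ is finite and bounded by a constant depending only on $F$, provided that $a > 2n/|\beta|$. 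Assuming $\varphi \not\equiv 0$ (the conclusion is trivial otherwise) and dividing by $\norm{\varphi}_{L^{p_0}}^{p-1}$ yields the desired uniform bound.

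The main obstacle lies in the compatibility of the H\"older exponent. A direct computation yields $a = np_0/(p_0+n)$, so $a < n$ for every finite $p_0$ and $a \to n$ from below as $p_0 \to \infty$; on the other hand, integrability of $1-e^{tF}$ demands $a > 2n/|\beta|$. These two conditions are compatible only when $|\beta| > 2$, which is precisely the defining feature of case (ii). Quantitatively, $a > 2n/|\beta|$ is equivalent to $p_0 > 2n/(|\beta|-2)$, which is the third condition I imposed above; in particular, the hypothesis $\beta < -2$ enters the argument precisely through this H\"older step.
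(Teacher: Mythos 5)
Your argument is essentially the paper's: both use exactly one step of a Moser-type iteration, and under the substitution $p_0 = \tau p$ your H\"older exponent $a = np_0/(p_0+n)$ is precisely the paper's $q = np/(p+n-1)$, so the two proofs differ only in which of $p$, $p_0$ is taken as the primary variable. The paper simply fixes $p > \max\{1,(2-2n)/\gamma\}$ and declares $p_0 = \tau p$ at the end, which quietly avoids the arithmetic pitfall you ran into.

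The pitfall: since $\tau = n/(n-1) > 1$, you have $p = p_0/\tau < p_0$, so the hypothesis $p_0 > (2-2n)/\gamma$ does \emph{not} imply $p > (2-2n)/\gamma$, and the sentence ``The first condition ensures that $p > (2-2n)/\gamma$'' is false. The same slip affects your claim that this condition forces $\varphi \in L^{p_0}$: with $\varphi = O(\rho^\gamma)$ and volume growth $\rho^{2n-1}\,d\rho$, integrability of $|\varphi|^{p_0}$ requires $p_0\gamma < -2n$, i.e.\ $p_0 > 2n/|\gamma|$, which is strictly stronger than $p_0 > (2n-2)/|\gamma| = (2-2n)/\gamma$. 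Conveniently, $\tau\,(2-2n)/\gamma = 2n/|\gamma|$, so requiring $p > (2-2n)/\gamma$ (equivalently $p_0 > 2n/|\gamma|$) simultaneously fixes both issues: it lets Lemma \ref{lem:est1} and the Sobolev inequality apply to $|\varphi|^{p/2}$, and it makes $\norm{\varphi}_{L^{p_0}}$ finite so that the division step is legitimate. Replacing $(2-2n)/\gamma$ with $2n/(-\gamma)$ in your choice of $p_0$ closes the gap (and is still consistent with the lemma's statement, since the lemma only asserts existence of some $p_0$ above the stated threshold). The rest of your argument—the conjugacy check $a'(p-1)=p_0$, the asymptotics $a \nearrow n$, and the equivalence $a > 2n/|\beta| \iff p_0 > 2n/(|\beta|-2)$—is correct.
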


\begin{proof}
Choose $p$ satisfying $p > 1$ and $p > (2-2n)/\gamma$. Let $q$ and $r$ be the numbers $q = np/(p + n - 1)$ and $r = \tau p/(p-1)$ where $\tau = n/(n-1)$,  and note that $q$ and $r$ satisfy $1/q + 1/r = 1$. Using Lemma \ref{lem:sobolev}, we have an estimate of the form 
\[
\norm{\varphi}_{L^{\tau p}}^p \leqslant C \norm{\nabla |\varphi|^{p/2}}^2_{L^2}.
\] 
Then we apply the result of the Lemma \ref{lem:est1} and H\"older's inequality to obtain that 
\begin{align*}
\norm{\varphi}_{L^{\tau p}}^p  &\leqslant \frac{Cnp^2}{4(p-1)} \int_{\mathcal{X}} (1 - e^{tF})\varphi |\varphi|^{p-2} dV_g \\
&\leqslant  \frac{Cnp^2}{4(p-1)} \norm{1-e^{tF}}_{L^q} \norm{|\varphi|^{p-1}}_{L^r}.
\end{align*}
But we note from the definition of $r= \tau p/(p-1)$ that 
\[
\norm{|\varphi|^{p-1}}_{L^r} = \norm{\varphi}_{L^{r(p-1)}}^{p-1} = \norm{\varphi}^{p-1}_{L^{\tau p}}.
\]
We conclude that for $C$ sufficiently large, we have
\[
\norm{\varphi}_{L^{\tau p}} \leqslant  Cp  \norm{1-e^{tF}}_{L^q}.
\]
The condition that $p > (2-2n)/\gamma$ implies that $q\beta < -2n$, so that $\norm{1 - e^{tF}}_{L^q}$ exists, and can be bounded by a constant depending on $\mathcal{X}, \omega,$ and $F$. Choosing $p_0 = \tau p$ completes the proof of the claim. 
\end{proof}

\begin{lemma}\label{lem:induc}
Setting $\tau = n/(n-1)$ and with $p_0$ as in Lemma \ref{lem:p_0}, for each integer $k \geqslant 0$, let $p_k = \tau^k p_0$. Then there are constants $A, B >0 $ such that any solution $\varphi \in C^\infty_{\gamma}(\mathcal{X})$ to \eqref{eqn:*_t}  satisfies 
\[
\norm{\varphi}_{L^{p_k}} \leqslant A (Bp_k)^{-n/p_k}. 
\]
\end{lemma}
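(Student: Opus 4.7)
\emph{Plan of proof.} The statement is a Moser iteration, with the base case $k = 0$ furnished by Lemma \ref{lem:p_0} and the induction fueled by combining Lemma \ref{lem:est1} with Lemma \ref{lem:sobolev}. The iteration is set up so that passing from $p_k$ to $p_{k+1} = \tau p_k$ uses one application of Sobolev embedding to trade an $L^{p_k}$-based quantity for an $L^{\tau p_k}$-based one.

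For the single iteration step, fix $p = p_k$ and combine Lemmas \ref{lem:est1} and \ref{lem:sobolev} (the latter applied to $\psi = |\varphi|^{p/2}$) to obtain
\[
\norm{\varphi}_{L^{\tau p}}^{p} = \norm{|\varphi|^{p/2}}_{L^{2\tau}}^{2} \leqslant C_S^{2}\,\norm{\nabla|\varphi|^{p/2}}_{L^{2}}^{2} \leqslant \frac{C_S^{2}\,n p^{2}}{4(p-1)} \int_{\mathcal{X}}(1-e^{tF})\,\varphi\,|\varphi|^{p-2}\,dV_g.
\]
I would then split the right-hand integrand by H\"older's inequality with conjugate exponents $p$ and $p/(p-1)$:
\[
\int_{\mathcal{X}}(1-e^{tF})\,\varphi\,|\varphi|^{p-2}\,dV_g \leqslant \norm{1-e^{tF}}_{L^{p}}\,\norm{\varphi}_{L^{p}}^{p-1}.
\]
A crucial point is that $\norm{1-e^{tF}}_{L^{p}}$ is \emph{uniformly} bounded for $p \geqslant p_0$: since $F$ is bounded pointwise on $\mathcal{X}$, so is $1-e^{tF}$, while $\norm{1-e^{tF}}_{L^{p_0}}$ is finite (using $p_0 > 3n/2 > 2n/|\beta|$ in the range $\beta < -2$, together with the decay of $F$), so the interpolation $\norm{1-e^{tF}}_{L^{p}} \leqslant \norm{1-e^{tF}}_{L^{\infty}}^{1-p_0/p}\norm{1-e^{tF}}_{L^{p_0}}^{p_0/p}$ gives the claim. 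Taking $p$-th roots yields a recursion of the form
\[
\norm{\varphi}_{L^{p_{k+1}}} \leqslant (Cp_k)^{1/p_k}\,\norm{\varphi}_{L^{p_k}}^{(p_k-1)/p_k}
\]
with $C$ independent of $k$ and of $t \in [0,1]$.

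Finally, I would verify by induction that the ansatz $\norm{\varphi}_{L^{p_k}} \leqslant A(Bp_k)^{-n/p_k}$ is self-reproducing under this recursion by direct substitution. Using $p_{k+1} = \tau p_k$ with $\tau = n/(n-1)$, the leading $\log p_k$ terms on both sides of the induction step match precisely because $n/\tau = n-1$; one then chooses $A$ large enough to absorb the base-case constant from Lemma \ref{lem:p_0} and $B$ large enough to absorb the single-step constant $C$, after which the induction closes. The main obstacle is this bookkeeping---ensuring that the constants assemble into the precise form $A(Bp_k)^{-n/p_k}$ rather than a weaker recursive bound---since this exact form is what will be needed in the next subsection to convert the $L^{p_k}$-bounds into a $C^0$-estimate by letting $k \to \infty$. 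The argument is otherwise parallel to Joyce's Moser iteration in the ALE setting \cite{joyce}, with Lemma \ref{lem:sobolev} replacing the Euclidean Sobolev inequality; no new geometric input is required beyond the ingredients already in hand.
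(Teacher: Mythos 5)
Your proposal is correct and follows essentially the same approach as the paper: both set up the Moser recursion from Lemmas \ref{lem:est1} and \ref{lem:sobolev} together with H\"older's inequality (with conjugate exponents $p_k$ and $p_k/(p_k-1)$), both establish a uniform bound $\norm{1-e^{tF}}_{L^{p_k}} \leqslant D$ (you via interpolation, the paper by noting the sequence converges to $\norm{1-e^{tF}}_{C^0}$), and both close the induction by verifying the ansatz $A(Bp_k)^{-n/p_k}$ is self-reproducing using the key identity $n/\tau = n-1$. The paper additionally spells out the elementary inequalities needed to absorb constants ($p_k^2/(4(p_k-1)) \leqslant p_k$, $p_k^{1/p_k} < 2$, and $B^{n/p_k-1} < B^{-1/3}$ from $p_0 > 3n/2$, leading to the explicit choice $\sqrt[3]{B} \geqslant CDn2^n\tau^{n-1}$), which your sketch leaves as bookkeeping but correctly anticipates.
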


\begin{proof}
The sequence of norms $\norm{1-e^{tF}}_{L^{p_k}}$ converges (to $\norm{1 - e^{tF}}_{C^0}$) and is hence bounded, meaning there is a constant $D$ depending only on $\mathcal{X}, \omega, F$ such that $\norm{1-e^{tF}}_{L^{p_k}} \leqslant D$ for each $k$.  If $C$ denotes the constant from Lemma \ref{lem:sobolev}, let $B > 1$ be a constant satisfying 
\[
\sqrt[3]{B} \geqslant CDn2^n \tau^{n-1}.
\]
Then let $A > 1$ denote a constant satisfying 
\[
A \geqslant (Bp_0)^{n/p_0} \norm{\varphi}_{L^{p_0}}. 
\]
With these choices for $A$ and $B$, we prove the claim by induction on the letter $k$. For $k = 0$, the claim is true by the definitions of $A$ and $B$. 

Now suppose the result has been proved for all integers less than or equal to $k$, and we prove the result for $k + 1$. If $r = p_k/(p_k - 1)$, then $1/p_k + 1/r = 1$. Using Lemma \ref{lem:sobolev}, we have an estimate of the form 
\[
\norm{\varphi}_{L^{p_{k+1}}}^{p_k} = \norm{\varphi}_{L^{\tau p_k}}^{p_k} \leqslant C \norm{\nabla |\varphi|^{p_k/2}}^2_{L^2}.
\]
We then apply Lemma \ref{lem:est1} and H\"older's inequality to find that 
\begin{align*}
\norm{\varphi}_{L^{p_{k+1}}}^{p_k} &\leqslant \frac{Cnp_k^2}{4(p_k-1)} \norm{1 - e^{tF}}_{L^{p_k}} \norm{|\varphi|^{p_k -1}}_{L^r} \\
&\leqslant  \frac{CDnp_k^2}{4(p_k-1)} \norm{|\varphi|^{p_k -1}}_{L^r}
\end{align*} 
by the definition of $D$.
Since $p_k > 1$, we have $p_k^2/(4(p_k-1)) \leqslant p_k$ so that 
\[
\norm{\varphi}_{L^{p_{k+1}}}^{p_k} \leqslant CDnp_k \norm{|\varphi|^{p_k-1}}_{L^r}. 
\]
But 
\[
\norm{|\varphi|^{p_k-1}}_{L^r} = \norm{\varphi}^{p_k-1}_{L^{r(p_k-1)}} = \norm{\varphi}^{p_k-1}_{L^{p_k}}
\]
implies that  
\[
\norm{\varphi}_{L^{p_{k+1}}}^{p_k} \leqslant CDnp_k \norm{\varphi}^{p_k-1}_{L^{p_k}}.
\]
We apply the inductive hypothesis to the right-hand side to find that 
\[
\norm{\varphi}_{L^{p_{k+1}}}^{p_k}\leqslant  A^{p_k-1} CDn p^{n/p_k}  B^{n/p_k-1}(Bp_k)^{1-n}.
\]
The quantity $A$ is larger than $1$, the inequality $p_k^{1/p_k} < 2$ is valid for any positive number $p_k >1$, and we are assuming that $p_0 > 3n/2$ (so that $B^{n/p_k-1} < B^{-1/3}$), so that we may obtain 
\[
\norm{\varphi}_{L^{p_{k+1}}}^{p_k} \leqslant  A^{p_k} CDn 2^{n}  B^{-1/3} (B p_k)^{1-n}.
\]
By our assumption on $B$, we have that $CDn2^nB^{-1/3} \leqslant \tau^{1-n}$, and we conclude that 
\[
\norm{\varphi}_{L^{p_{k+1}}}^{p_k} \leqslant A^{p_k} (B \tau p_k)^{1-n} = (A (B p_{k+1})^{-n/p_{k+1}})^{p_k} .
\]
This completes the inductive step and the proof. 
\end{proof}

\begin{proposition}[A $C^0$-estimate]
There is a constant $C$ such that any solution $\varphi \in C_{\gamma}^{\infty}(\mathcal{X})$ to \eqref{eqn:*_t} satisfies $\norm{\varphi}_{C^0(\mathcal{X})} \leqslant C$.
\end{proposition}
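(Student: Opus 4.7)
The plan is classical Moser iteration, capping off the sequence of $L^{p_k}$ bounds already produced in Lemma \ref{lem:induc}. Given a solution $\varphi \in C^\infty_\gamma(\mathcal{X})$ with $\beta + 2 \leqslant \gamma < 0$, I first observe that the strict negativity of $\gamma$ forces $\varphi$ to decay at infinity, so $\varphi$ is automatically bounded and lies in $L^\infty(\mathcal{X}) \cap L^{p_0}(\mathcal{X})$. On any $\sigma$-finite measure space, any function in $L^{p_0} \cap L^\infty$ satisfies $\lim_{p \to \infty} \norm{\varphi}_{L^p} = \norm{\varphi}_{L^\infty}$, so an $L^\infty$-bound can be read off from the behavior of $\norm{\varphi}_{L^{p_k}}$ as $k \to \infty$.

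Taking $k \to \infty$ in the bound $\norm{\varphi}_{L^{p_k}} \leqslant A(Bp_k)^{-n/p_k}$ of Lemma \ref{lem:induc}, one observes that
\[
\frac{n}{p_k} \log(Bp_k) \longrightarrow 0 \qquad \text{as} \qquad k \to \infty,
\]
since $p_k = \tau^k p_0$ grows exponentially in $k$ while $\log(Bp_k)$ grows only linearly. Consequently $(Bp_k)^{-n/p_k} \to 1$, and combining with the previous paragraph one obtains $\norm{\varphi}_{L^\infty(\mathcal{X})} \leqslant A$.

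The one point that deserves care is verifying that $A$ is genuinely \emph{a priori}, i.e.\ independent of the particular solution $\varphi$. Inspecting the construction in Lemma \ref{lem:induc}, the constant $A$ need only satisfy $A \geqslant (Bp_0)^{n/p_0} \norm{\varphi}_{L^{p_0}}$, and $\norm{\varphi}_{L^{p_0}}$ is in turn controlled by a constant depending only on $\mathcal{X}, \omega, F$ thanks to Lemma \ref{lem:p_0}. Thus $A$ may be chosen uniformly over all solutions of \eqref{eqn:*_t}, and the proposition follows with $C = A$.

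I do not anticipate any real obstacle at this final stage: the non-compactness of $\mathcal{X}$ has already been absorbed into the preparatory lemmas (Stokes' theorem on the truncations $T_R$ in Lemma \ref{lem:est1}, the global Sobolev inequality of Lemma \ref{lem:sobolev}, and the uniform $L^{p_0}$-bound of Lemma \ref{lem:p_0}), leaving only the standard Moser iteration passage to the supremum norm, which is purely measure-theoretic.
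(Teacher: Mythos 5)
Your argument is correct and is essentially the same as the paper's one-line proof, which also takes the limit $\norm{\varphi}_{C^0} \leqslant \lim_{k \to \infty} A(Bp_k)^{-n/p_k} = A$; you have simply made explicit the underlying measure-theoretic fact that $\norm{\varphi}_{L^{p_k}} \to \norm{\varphi}_{L^\infty}$ (valid here since Lemma \ref{lem:p_0} already places $\varphi$ in $L^{p_0}$) and the observation that $A$ is uniform over solutions. Both points are correct and the proof is sound.
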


\begin{proof}
One uses the previous lemma to find that 
\begin{align*}
\norm{\varphi}_{C^0} &\leqslant \lim_{k \to \infty} A (B p_k)^{-n/p_k} = A
\end{align*}
as desired. 
\end{proof}

\subsection{A $C^3$-estimate}

Local calculations together with the $C^0$-estimate then imply the following $C^2$-estimate (see \cite{yau78, szekelyhidi14}).

\begin{proposition}[A $C^2$-estimate]\label{prop:C^2}
There is a constant $C$ depending on $\mathcal{X}, \omega, F$ such that a solution $\varphi \in C_\gamma^\infty(\mathcal{X})$ of \eqref{eqn:*_t} satisfies 
\[
C^{-1}(g_{j\bar{k}}) < (g_{j\bar{k}} + \partial_j \partial_{\bar{k}}\varphi) < C (g_{j\bar{k}})
\]
where $<$ means that the difference of matrices is positive definite and where $\omega$ has local expression $\omega = \sqrt{-1} g_{j\bar{k}} dz^j \wedge d\bar{z}^k$. 
\end{proposition}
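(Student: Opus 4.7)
The plan is to prove this by a Yau-type maximum principle applied to the quantity $H = \log \operatorname{tr}_\omega \omega_\varphi - A\varphi$, where $\omega_\varphi = \omega + \sqrt{-1}\ddb \varphi$ and $A > 0$ is a constant to be chosen, handling the non-compactness of $\mathcal{X}$ via the decay of $\varphi$ at infinity. All computations in Yau's argument are pointwise and tensorial, so they pass unchanged to orbifold charts where we work with $G$-invariant functions on the local uniformizers.

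The standard local calculation (see e.g.\ \cite{yau78, szekelyhidi14}) gives the pointwise inequality
\[
\Delta_\varphi(\log \operatorname{tr}_\omega \omega_\varphi) \geqslant -B\,\operatorname{tr}_{\omega_\varphi} \omega - B,
\]
where $B$ depends only on $\|F\|_{C^2}$ and on an upper bound for the bisectional curvature of $g$; the asymptotically conical hypothesis together with compactness of $K'$ ensures that such a bound is uniform across $\mathcal{X}$. Combined with the identity $\Delta_\varphi \varphi = n - \operatorname{tr}_{\omega_\varphi}\omega$, the choice $A = B + 1$ yields
\[
\Delta_\varphi H \geqslant \operatorname{tr}_{\omega_\varphi}\omega - An - B,
\]
so that at any interior maximum point $x_0$ of $H$ one has $\operatorname{tr}_{\omega_\varphi}\omega(x_0) \leqslant An + B$.

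To close the argument, I would exploit that $\gamma < 0$ forces $|\varphi|_g = O(\rho^\gamma) \to 0$ and $|\sqrt{-1}\ddb\varphi|_g = O(\rho^{\gamma - 2}) \to 0$ as $\rho \to \infty$, so that $\operatorname{tr}_\omega \omega_\varphi \to n$ and hence $H \to \log n$ at infinity. Thus either $H$ attains its supremum at some $x_0 \in X$, or $\sup H \leqslant \log n + A \|\varphi\|_{C^0}$, which is controlled by the $C^0$-estimate of the previous subsection. In the former case, let $\lambda_i$ be the eigenvalues of $g_\varphi$ with respect to $g$. Then $\operatorname{tr}_{\omega_\varphi}\omega = \sum_i \lambda_i^{-1}$, so $\lambda_i(x_0) \geqslant (An+B)^{-1}$; combined with the Monge-Amp\`ere identity $\prod_i \lambda_i = e^{tF}$, this gives $\lambda_j(x_0) \leqslant e^{\|F\|_{C^0}} (An+B)^{n-1}$, and hence a uniform upper bound on $\operatorname{tr}_\omega\omega_\varphi(x_0) = \sum_i \lambda_i(x_0)$. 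Using the $C^0$-estimate to bound $A\varphi$, this propagates to $\sup_\mathcal{X} H$, and thus to a global upper bound on $\operatorname{tr}_\omega\omega_\varphi$. The lower bound in the proposition is then automatic: $\prod_i \lambda_i \geqslant e^{-\|F\|_{C^0}}$ together with the upper bound on each $\lambda_i$ forces
\[
\lambda_j \geqslant \frac{e^{-\|F\|_{C^0}}}{(\sup \operatorname{tr}_\omega\omega_\varphi)^{n-1}}.
\]

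I expect the main technical point to be the legitimacy of the dichotomy ``$\sup H$ attained interiorly versus approached at infinity'', and the uniformity of the constant $B$ across $\mathcal{X}$. Both rest on the asymptotically conical hypothesis: away from $K'$, all geometric invariants of $g$ differ from those of the fixed cone metric $g_0$ by terms of order $O(\rho^{\lambda_g})$ with $\lambda_g < 0$, so curvatures of $g$ are uniformly bounded, and the convergence $\operatorname{tr}_\omega \omega_\varphi \to n$ at infinity that starts the dichotomy is justified by the hypothesis $\varphi \in C^\infty_\gamma(\mathcal{X})$ with $\gamma < 0$.
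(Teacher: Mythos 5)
Your proposal is correct and follows precisely the argument the paper is citing: the paper gives no proof of its own, simply referring to \cite{yau78, szekelyhidi14} plus the $C^0$-estimate, and your $H = \log \operatorname{tr}_\omega\omega_\varphi - A\varphi$ computation is the Aubin--Yau/Sz\'ekelyhidi argument with the correct modification for the non-compact setting, namely the dichotomy between an interior maximum and $H \to \log n$ at infinity (guaranteed by $\varphi \in C^\infty_\gamma$ with $\gamma < 0$). One small imprecision worth flagging: the constant $B$ in the key differential inequality comes from a \emph{lower} bound on the holomorphic bisectional curvature of $g$ (the curvature term appears with a minus sign), not an upper bound; since the asymptotically conical hypothesis gives two-sided curvature bounds this does not affect the argument, but the direction matters if one only has a one-sided bound.
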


Let $S$ denote the tensor given by the difference of Levi-Civita connections $S = \hat{\nabla} - \nabla$, where $\hat{\nabla}$ is the connection corresponding to $\omega_\varphi$ and $\nabla$ is the one corresponding to $\omega$. Note that $S$ depends on the second and third derivatives of $\varphi$. So if $|S|$ denotes the norm of $S$ with respect to the metric $\omega_\varphi$, the fact that the metric $g_{j\bar{k}}$ is uniformly equivalent to the metric $g_{j\bar{k}} + \partial_j \partial_{\bar{k}}\varphi$ implies that a bound on $|S|$ gives a $C^3$-bound on $\varphi$.

\begin{proposition}[A $C^3$-estimate]\label{prop:C^3}
There is a constant $C$ depending on $\mathcal{X}, \omega, F$ such that if $\varphi \in C_\gamma^\infty(\mathcal{X})$ is a solution to \eqref{eqn:*_t}, then $|S| \leqslant C$, where $|S|$ is the norm of $S$ computed with respect to the metric $\omega_\varphi$. 
\end{proposition}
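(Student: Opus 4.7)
The plan is to adapt Yau's classical Calabi-type third-order estimate to the asymptotically conical orbifold setting. The fundamental new ingredient compared to the compact case is that the decay hypothesis $\varphi \in C^\infty_\gamma(\mathcal{X})$ with $\gamma < 0$ forces the auxiliary function used in the maximum principle to tend to zero at infinity, substituting for the compactness that guarantees attainment of maxima in the standard proof.

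First, I would verify the decay of the relevant quantities. Since $\varphi \in C^\infty_\gamma(\mathcal{X})$ with $\gamma < 0$, each covariant derivative $\nabla^k \varphi$ decays at infinity at rate $O(\rho^{\gamma - k})$, so in particular $\Delta \varphi \to 0$ as $\rho \to \infty$. Combining this with the uniform comparison $C^{-1} g \leqslant g_\varphi \leqslant C g$ from Proposition \ref{prop:C^2}, one checks that $|S|_{\omega_\varphi}$ also tends to zero at infinity, because the components of $S = \hat{\nabla} - \nabla$ are algebraic expressions in $g_\varphi^{j\bar{k}}$ and the first derivatives of $g_\varphi - g$, hence are $O(\rho^{\gamma - 3})$.

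Second, I would carry out the classical Calabi computations of $\Delta_\varphi |S|^2$ and $\Delta_\varphi(\Delta \varphi)$ in orbifold uniformizing charts, working with $G$-invariant lifts of all relevant tensors. Differentiating the Monge-Amp\`ere equation \eqref{eqn:*_t} and using the $C^0$- and $C^2$-estimates (together with the fact that $F$ and $\omega$ are fixed background data) to control lower-order terms uniformly in $t$ and in the solution $\varphi$, one obtains inequalities of the schematic form
\[
\Delta_\varphi |S|^2 \geqslant -C_1 |S|^2 - C_2, \qquad \Delta_\varphi(\Delta \varphi) \geqslant c |S|^2 - C_3,
\]
with positive constants $c, C_1, C_2, C_3$ depending only on $\mathcal{X}, \omega, F$. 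These pointwise estimates are identical to those in the compact manifold case (see the expositions in \cite{joyce} and \cite{szekelyhidi14}) and transfer to the orbifold setting without change because they are local and $G$-invariant. Setting $f = |S|^2 + K \Delta \varphi$ with $K > (C_1 + 1)/c$, one finds $\Delta_\varphi f \geqslant |S|^2 - C_4$ throughout $\mathcal{X}$.

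Third, I would apply the maximum principle to $f$. The $C^2$-estimate furnishes a uniform bound $\|\Delta \varphi\|_{C^0} \leqslant C_5$, and the first step shows $f \to 0$ at infinity. Either $f \leqslant 0$ everywhere, in which case $|S|^2 \leqslant -K \Delta \varphi \leqslant K C_5$ directly, or $f$ attains its supremum at some interior point $x_0 \in X$; applying the maximum principle at $x_0$ (on the $G$-invariant lift in a uniformizing chart if $x_0$ is an orbifold point) gives $\Delta_\varphi f(x_0) \leqslant 0$, hence $|S(x_0)|^2 \leqslant C_4$, whence $\sup f \leqslant C_4 + K C_5$ and $|S|^2 \leqslant C_4 + 2 K C_5$ everywhere.

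The main technical obstacle is not a single deep step but the careful bookkeeping required to ensure that all constants produced by the Calabi computation depend only on $\mathcal{X}, \omega, F$, and not on $t$ or on the particular $\varphi$; this requires patient application of the $C^0$- and $C^2$-estimates at every stage. The decay hypothesis on $\varphi$ handles the only essential non-compact difficulty, namely guaranteeing that the supremum of the auxiliary function is either nonpositive or attained in the interior.
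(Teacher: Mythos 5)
Your proposal is correct and fills in the standard Calabi third-order estimate that the paper alludes to (via the citations to \cite{yau78, szekelyhidi14} and the statement that the section follows \cite{joyce}) but does not write out: the key adaptation to the non-compact setting---using the decay of $\varphi \in C^\infty_\gamma(\mathcal{X})$ with $\gamma < 0$ so that the auxiliary function $|S|^2 + K\,\Delta\varphi$ either is nonpositive everywhere or attains an interior maximum---is exactly right, and the orbifold points are handled correctly by working with $G$-invariant lifts in uniformizing charts. One point worth making explicit is that the constants in the schematic Calabi inequalities require uniform bounds on the curvature of $\omega$, its covariant derivative, and the derivatives of $F$ over all of $\mathcal{X}$; these are not automatic as in the compact case but hold here because $g$ is asymptotically conical (so $|R|_g = O(\rho^{-2})$ and $|\nabla R|_g = O(\rho^{-3})$) and $F \in C^\infty_\beta(\mathcal{X})$ with $\beta < 0$.
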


Once a $C^3$-estimate is known, ordinary bootstrapping arguments together with Schauder estimates and the $C^0$-estimate then imply the following.

\begin{corollary}[A $C^{k,\alpha}$-estimate]
Let $k$ be a nonnegative integer and $\alpha \in (0,1)$. There is a constant $C$ depending on $\mathcal{X}, \omega, F$ such that if $\varphi \in C_\gamma^\infty(\mathcal{X})$ is a solution to \eqref{eqn:*_t}, then $\norm{\varphi}_{C^{k,\alpha}(\mathcal{X})} \leqslant C$.
\end{corollary}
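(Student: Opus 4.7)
The plan is to derive the estimate by standard elliptic bootstrapping, applied locally on a covering of $\mathcal{X}$ of uniformly controlled geometry, using the $C^0$, $C^2$, and $C^3$ estimates already established as inputs. First, fix a locally finite cover $\{V_\alpha\}$ of $\mathcal{X}$ together with concentric relatively compact subsets $V'_\alpha \Subset V_\alpha$, chosen so that the background data (the components $g_{j\bar k}$ of $\omega$, their inverses, the complex structure, and $F$) have $C^{m,\alpha}$-norms on each $V_\alpha$ bounded uniformly in $\alpha$ for every $m$. Over the compact set $K'$ this is a finite collection of orbifold charts. Over the asymptotic end one uses the diffeomorphism $\Phi$ together with the hypothesis $\Phi^*g-g_0=O(r^{\lambda_g})$: rescale dyadic annuli $\{r\in[2^k,2^{k+1}]\}\times\Sigma$ to unit size, on which the rescaled cone metric has bounded geometry and the rescaled perturbation decays, yielding uniform background bounds in every $C^{m,\alpha}$-norm.

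Next, in any such local chart write \eqref{eqn:*_t} as
\[
\log\det(g_{j\bar k}+\partial_j\partial_{\bar k}\varphi)-\log\det(g_{j\bar k})-tF=0,
\]
and differentiate in $z^\ell$ to obtain the linear equation
\[
(g_\varphi)^{j\bar k}\partial_j\partial_{\bar k}(\partial_\ell\varphi)=t\,\partial_\ell F+\partial_\ell\log\det(g_{j\bar k})-(g_\varphi)^{j\bar k}\partial_\ell g_{j\bar k}=:h_\ell.
\]
The $C^2$-estimate of Proposition \ref{prop:C^2} gives uniform ellipticity of the operator $(g_\varphi)^{j\bar k}\partial_j\partial_{\bar k}$ with constants depending only on the constant $C$ there; the $C^3$-estimate of Proposition \ref{prop:C^3} shows that $\partial_j\partial_{\bar k}\varphi$ is uniformly Hölder, so the leading coefficients $(g_\varphi)^{j\bar k}$ and the right-hand side $h_\ell$ are uniformly bounded in $C^{0,\alpha}(V_\alpha)$. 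Interior Schauder estimates then produce a uniform bound on $\|\partial_\ell\varphi\|_{C^{2,\alpha}(V'_\alpha)}$, hence on $\|\varphi\|_{C^{3,\alpha}(V'_\alpha)}$; passing to the supremum over $\alpha$ yields $\|\varphi\|_{C^{3,\alpha}(\mathcal{X})}\leqslant C_3$.

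For the inductive step, assume $\|\varphi\|_{C^{m,\alpha}(\mathcal{X})}\leqslant C_m$ for some $m\geqslant 3$. Then $(g_\varphi)^{j\bar k}$ and $h_\ell$ are uniformly bounded in $C^{m-2,\alpha}$ on each $V_\alpha$, so higher order interior Schauder estimates applied to the same linearized equation give a uniform bound $\|\partial_\ell\varphi\|_{C^{m,\alpha}(V'_\alpha)}\leqslant C'_m$, hence $\|\varphi\|_{C^{m+1,\alpha}(\mathcal{X})}\leqslant C_{m+1}$. Iterating yields a uniform bound in $C^{k,\alpha}(\mathcal{X})$ for every $k$, as required.

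The only genuinely delicate point is the uniformity of the Schauder constants across the non-compact end; this reduces to bounded geometry of both the background metric $g$ and the perturbed metric $g+\sqrt{-1}\ddb\varphi$ after rescaling the dyadic annuli, which is supplied respectively by the asymptotically conical hypothesis and by the $C^2$-estimate. The orbifold charts covering $K'$ present no additional difficulty because Schauder estimates on orbifold charts follow from equivariant Schauder estimates on their finite uniformizing covers.
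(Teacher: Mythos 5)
Your proposal is correct and takes essentially the same route the paper intends: the paper dispatches this corollary with the single remark that it follows from the $C^3$-estimate by ``ordinary bootstrapping arguments together with Schauder estimates and the $C^0$-estimate,'' and you have simply written out that argument (linearize via $\partial_\ell$, use the $C^2$-estimate for uniform ellipticity and the $C^3$-estimate for uniform H\"older control of the coefficients, apply interior Schauder on a cover of uniformly bounded geometry, and iterate). The only bookkeeping point worth being careful about is that on a dyadic annulus rescaled to unit size the natural unknown is $\lambda^{-2}\varphi\circ\Phi_\lambda$, so the rescaled Schauder bound translates to $|\nabla^j\varphi|\leqslant C\rho^{2-j}$; this is the unweighted bound for $j\geqslant 2$, and for $j=0,1$ it is supplied directly by the $C^0$-estimate together with interpolation against the $C^2$ bound, so the claimed uniform $C^{k,\alpha}(\mathcal{X})$ estimate does follow.
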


\subsection{A weighted $C^0$-estimate}

We now prove a weighted $C^0$-estimate. Our goal is specifically to prove a $C^0_{\beta + 2}$-estimate. However, to do so, we must first prove a $C^0_{\gamma}$-estimate for a weight $\gamma$ satisfying $\beta + 2 < \gamma$ \emph{and} $-1 < \gamma$. A stronger estimate for the weight $\beta + 2$ will eventually follow (see Proposition \ref{prop:C^0_beta+2}).  

To prove the $C^0_\gamma$-estimate, we proceed in a way analogous to the unweighted $C^0$-estimate presented above, with minor adaptations to deal with the weight $\gamma$. 

The arguments in \cite[Proposition 8.6.7]{joyce} show directly (in the setting of asymptotically locally Euclidean manifolds) that the following is true. 

\begin{lemma}
There is a constant $C > 0$ such that for $p > (2-2n)/\gamma $ and $q \geqslant 0$ satisfying $p\gamma + q < 2 - 2n$, any solution $\varphi \in C_\gamma^\infty(\mathcal{X})$ to \eqref{eqn:*_t} satisfies 
\begin{align*}
\norm{\nabla (|\varphi|^{p/2} \rho^{q/2})}_{L^2}^2 &\leqslant \frac{np^2}{4(p-1)} \int_{\mathcal{X}} (1 - e^{tF}) \varphi |\varphi|^{p-2} \rho^q dV \\
&\;\; + C \frac{q(p+q)}{4(p-1)} \int_{\mathcal{X}} |\varphi|^p \rho^{q-2} dV. 
\end{align*}
\end{lemma}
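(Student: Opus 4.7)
The plan is to mimic the Stokes-theorem proof of Lemma \ref{lem:est1} but with the weight $\rho^q$ inserted, and then handle the new error terms that this produces by a second integration by parts. Set $u = |\varphi|^{p/2}\rho^{q/2}$ and $T = \omega^{n-1} + \omega^{n-2}\wedge\omega_\varphi + \cdots + \omega_\varphi^{n-1}$, where $\omega_\varphi = \omega + \sqrt{-1}\ddb\varphi$. First I would verify that, under the hypothesis $p\gamma + q < 2-2n$, the boundary integrals on $\partial\{\rho\leqslant R\}$ of the forms $\varphi|\varphi|^{p-2}\rho^q d^c\varphi\wedge T$ and $|\varphi|^p\rho^{q-1}d^c\rho\wedge T$ are both $O(R^{p\gamma + q + 2n - 2})$, so they vanish as $R\to\infty$ and the ensuing Stokes integrations over $\mathcal{X}$ are legal.

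Applying Stokes to the first form, exactly as in Lemma \ref{lem:est1} but keeping the extra term produced by $d\rho^q = q\rho^{q-1}d\rho$, yields the identity
\[
(p-1)\tilde I + q\tilde J = \int_\mathcal{X}(1-e^{tF})\varphi|\varphi|^{p-2}\rho^q\,dV, \qquad (A)
\]
where $\tilde I = \int|\varphi|^{p-2}\rho^q\,d\varphi\wedge d^c\varphi\wedge T$ and $\tilde J = \int\varphi|\varphi|^{p-2}\rho^{q-1}\,d\rho\wedge d^c\varphi\wedge T$. Applying Stokes to the second form gives the new identity
\[
p\tilde J + (q-1)\tilde A + \tilde B = 0, \qquad (B)
\]
where $\tilde A = \int|\varphi|^p\rho^{q-2}\,d\rho\wedge d^c\rho\wedge T$ and $\tilde B = \int|\varphi|^p\rho^{q-1}\,dd^c\rho\wedge T$. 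In parallel, a direct expansion of $du\wedge d^c u$ (using the symmetry $d\rho\wedge d^c\varphi\wedge T = d\varphi\wedge d^c\rho\wedge T$, which holds because each summand $\omega^{n-1-k}\wedge\omega_\varphi^k$ of $T$ is a strongly positive $(n-1,n-1)$-form) gives
\[
\int du\wedge d^c u\wedge T = \frac{p^2}{4}\tilde I + \frac{pq}{2}\tilde J + \frac{q^2}{4}\tilde A.
\]
Since each $\omega^{n-1-k}\wedge\omega_\varphi^k$ is positive, the integrand $du\wedge d^c u\wedge T$ is pointwise nonnegative and dominates its $k=0$ summand $\tfrac{1}{n}|\nabla u|_g^2\,\omega^n$, so $\int|\nabla u|_g^2\,dV \leqslant n\int du\wedge d^c u\wedge T$.

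To finish, I would solve $(A)$ for $\tilde I$ and $(B)$ for $\tilde J$, substitute into the preceding upper bound, and collect coefficients. A short algebraic manipulation shows that the coefficient of the first integral on the right-hand side of the claim emerges as exactly $\frac{np^2}{4(p-1)}$, while the remaining terms are $\frac{nq(p+q-2)}{4(p-1)}\tilde A$ and $-\frac{nq(p-2)}{4(p-1)}\tilde B$. To bound $\tilde A$ and $|\tilde B|$ by a constant multiple of $\int|\varphi|^p\rho^{q-2}\,dV$, I would use (i) the $C^2$-estimate from Proposition \ref{prop:C^2}, which gives $\omega_\varphi\leqslant C\omega$ and hence $\omega^{n-1-k}\wedge\omega_\varphi^k\leqslant C^k\omega^{n-1}$, and (ii) the asymptotic facts $|\nabla\rho|_g = O(1)$ and $|dd^c\rho|_g = O(\rho^{-1})$, both of which follow from $\rho = r$ outside a compact set together with the explicit formulas $\nabla^2 r(V,W) = r^{-1} g_0(V,W)$ for $V,W$ tangent to the link on the cone. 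Since $\max\{p+q-2,\,p-2\}\leqslant p+q$, the combined error becomes a constant multiple of $\frac{q(p+q)}{4(p-1)}\int|\varphi|^p\rho^{q-2}\,dV$, as asserted.

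The main obstacle is bookkeeping rather than conceptual depth: one must track coefficients through the combination of $(A)$, $(B)$, and the expansion of $du\wedge d^c u\wedge T$ in order to confirm that the leading constant $\frac{np^2}{4(p-1)}$ survives unchanged. A naive Cauchy--Schwarz/AM--GM treatment of $\tilde J$ would absorb it into $\tilde I$ at the cost of an unwanted factor of $2$, so the crucial point is that the second Stokes identity $(B)$ expresses $\tilde J$ directly in terms of $\tilde A$ and $\tilde B$ (which are already lower-order) without invoking $\tilde I$ at all. One minor technical point is that the chain rule for $|\varphi|^{p/2}$ on the zero set of $\varphi$ should be justified by the standard regularization replacing $|\varphi|^p$ by $(\varphi^2+\epsilon^2)^{p/2}$, running the whole argument for each $\epsilon>0$, and then sending $\epsilon\to 0$.
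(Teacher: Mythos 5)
Your proof is correct, and it reconstructs the integration-by-parts argument from Joyce (Prop.\ 8.6.7) that the paper cites; the key insight---deriving the second Stokes identity $(B)$ from $d[\,|\varphi|^p\rho^{q-1}d^c\rho\wedge T\,]$ to solve for $\tilde J$ in terms of the lower-order $\tilde A,\tilde B$ rather than absorbing it by Cauchy--Schwarz---is exactly what is needed to land on the precise coefficient $\frac{np^2}{4(p-1)}$. One small inaccuracy in the justification: the identity $d\rho\wedge d^c\varphi\wedge T = d\varphi\wedge d^c\rho\wedge T$ does not require positivity of the summands $\omega^{n-1-k}\wedge\omega_\varphi^k$; it holds for wedging against \emph{any} $(n-1,n-1)$-form, because when two real $1$-forms are wedged into an $(n,n)$-form only the $(1,1)$-part survives, and both expressions reduce to $\tfrac{i}{2}(\partial\rho\wedge\bar\partial\varphi+\partial\varphi\wedge\bar\partial\rho)\wedge T$ (positivity is what you need later, to get $du\wedge d^cu\wedge T\geqslant du\wedge d^cu\wedge\omega^{n-1}$ and $\tilde A\geqslant 0$).
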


In addition, the arguments of Proposition 8.6.8 of \cite{joyce} prove also the following weighted analogue of the Sobolev inequality. It is convenient to introduce the weighted Sobolev norm 
\[
\norm{f}_{L_{k,\beta}^q} = \left(\sum_{j=0}^k \int_{\mathcal{X}} |\rho^{j-\beta} \nabla^j f|^q \rho^{-2n} \: dV \right)^{1/q}.
\]

\begin{lemma}
Let $\gamma$ be a weight satisfying $\beta + 2 < \gamma$ and $-1 < \gamma$. There is a constant $C > 0$ such that if $p \geqslant 2$ satisfies $p  \geqslant (2 - 2n)/\gamma$ then any solution $\varphi \in C_\gamma^\infty(\mathcal{X})$ of \eqref{eqn:*_t} satisfies  
\[
\norm{\varphi}^p_{L^{\tau p}_{0,\gamma}} \leqslant C p (\norm{\varphi}_{L_{0,\gamma}^{p-1}}^{p-1} + \norm{\varphi}_{L_{0,\gamma}^p}^p).
\]
\end{lemma}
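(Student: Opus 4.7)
The strategy is a weighted Moser-type iteration step, mirroring the unweighted $L^{p_0}$-estimate of Lemma \ref{lem:p_0} but with an extra power of $\rho$ inserted throughout so as to produce the weighted $L^{\tau p}_{0,\gamma}$ norm on the left. The plan is to apply the ordinary Sobolev inequality (Lemma \ref{lem:sobolev}) to the auxiliary function $u = |\varphi|^{p/2} \rho^{q/2}$ for a carefully chosen exponent $q$, then estimate $\norm{\nabla u}_{L^2}^2$ using the integration-by-parts identity of the preceding lemma, and finally repackage the resulting integrals as weighted $L^{p-1}_{0,\gamma}$ and $L^p_{0,\gamma}$ norms of $\varphi$ via H\"older's inequality together with the decay of $F$.

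The correct choice is $q = -p\gamma - 2(n-1)$. Using $\tau = n/(n-1)$ and unravelling the definition of the weighted norm, one checks directly that $\norm{u}^2_{L^{2\tau}} = \norm{\varphi}^p_{L^{\tau p}_{0,\gamma}}$, so that applying the ordinary Sobolev inequality to $u$ produces exactly the left-hand side of the target estimate. The hypotheses $p \geqslant 2$ and $p \geqslant (2-2n)/\gamma$ force $q \geqslant 0$, which is a prerequisite for invoking the preceding lemma. That lemma then bounds $\norm{\nabla u}_{L^2}^2$ by a $\rho^q$-weighted integral of $(1-e^{tF})\varphi|\varphi|^{p-2}$ plus a $\rho^{q-2}$-weighted integral of $|\varphi|^p$, with prefactors $np^2/(4(p-1))$ and $Cq(p+q)/(4(p-1))$ respectively.

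The second of these integrals is identically $\norm{\varphi}^p_{L^p_{0,\gamma}}$ because $q-2 = -p\gamma - 2n$ exactly. For the first, the decay of $F$ yields $|1-e^{tF}| \leqslant C\rho^\beta$, and the strict inequality $\beta + 2 < \gamma$ gives $\rho^{\beta + q} \leqslant \rho^{\gamma - 2 + q} = \rho^{-(p-1)\gamma - 2n}$ on $\{\rho \geqslant 1\}$, which after the trivial sign estimate $\varphi|\varphi|^{p-2} \leqslant |\varphi|^{p-1}$ identifies the first integral with a constant multiple of $\norm{\varphi}^{p-1}_{L^{p-1}_{0,\gamma}}$. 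The two prefactors each simplify to $O(p)$: the elementary bound $p^2/(p-1) \leqslant 2p$ handles the first, and $q(p+q)/(p-1) = O(p)$ follows from $q = O(p)$ with implicit constant depending only on $\gamma$. Assembling the pieces yields the stated bound $Cp\bigl(\norm{\varphi}^{p-1}_{L^{p-1}_{0,\gamma}} + \norm{\varphi}^p_{L^p_{0,\gamma}}\bigr)$.

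The main technical subtlety to watch for is that our choice $q = -p\gamma - 2(n-1)$ places us precisely on the boundary $p\gamma + q = 2-2n$ of the strict inequality demanded by the preceding lemma. I would handle this either by revisiting its proof and verifying that the boundary term arising from Stokes' theorem at radius $R$ remains controllable at the endpoint (where it is $O(1)$ rather than $o(1)$ but still has a favorable sign), or, more painlessly, by applying the lemma at exponent $q - \delta$ for $\delta > 0$ and letting $\delta \to 0^+$ via monotone convergence. A secondary bookkeeping point is the hypothesis $-1 < \gamma$, which I expect to be used to ensure that the weighted norms on the right-hand side are finite for $\varphi \in C^\infty_\gamma$ (preventing the logarithmic divergence at infinity that would otherwise threaten $\int_\mathcal{X} \rho^{-2n} \, dV$-type integrals) and to keep the various $q$-dependent constants uniformly bounded in $p$.
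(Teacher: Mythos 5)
Your proof is correct and reconstructs precisely the Joyce-style Moser iteration step (Proposition~8.6.8 of \cite{joyce}) that the paper explicitly defers to: the choice $q = -p\gamma - 2(n-1)$ makes $\norm{|\varphi|^{p/2}\rho^{q/2}}^2_{L^{2\tau}}$ coincide with $\norm{\varphi}^p_{L^{\tau p}_{0,\gamma}}$, the preceding lemma controls $\norm{\nabla(|\varphi|^{p/2}\rho^{q/2})}^2_{L^2}$, and the exponent bookkeeping ($q-2 = -p\gamma - 2n$, $\gamma - 2 + q = -(p-1)\gamma - 2n$, together with $|1-e^{tF}| \leqslant C\rho^{\beta} \leqslant C\rho^{\gamma-2}$ and $p^2/(p-1) \leqslant 2p$, $q = O(p)$) lands exactly on the stated right-hand side. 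The endpoint $p\gamma + q = 2-2n$ you flag is real, and your second workaround (apply the preceding lemma at $q-\delta$, use $\rho \geqslant 1$ so all integrands increase monotonically as $\delta \to 0^+$, and pass to the limit) is the clean way to close it; the remark about $-1 < \gamma$ is slightly off---that hypothesis is used in the neighboring $L^1_{0,\gamma}$-estimate lemma rather than here, while finiteness of the norms on the right is supplied by the iteration scheme itself rather than by the decay rate of $\varphi$ alone.
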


We may now obtain a uniform weighted $L^1$-estimate. 

\begin{lemma}\label{lem:L^p2}
Let $\gamma$ be a weight satisfying $\beta + 2 < \gamma$ and $-1 <  \gamma$. There is a constant $C > 0$ such that if $\varphi \in C^\infty_{\gamma}$ is a solution to \eqref{eqn:*_t} then $\norm{\varphi}_{L_{0,{\gamma}}^{1}} \leqslant C$.  
\end{lemma}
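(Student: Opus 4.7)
The plan is to adapt Joyce's argument from the ALE setting (Proposition 8.6.9 in his book), combining the two preceding lemmas (the weighted integration identity and the weighted Sobolev inequality) with the unweighted estimates already established in this section --- namely the uniform $C^0$-bound and the uniform $L^{p_k}$-bounds from Lemma \ref{lem:induc}.

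First, I would apply the preceding integration identity with exponents $(p, q)$ chosen so that (i) the constraints $p > (2n-2)/|\gamma|$, $q \geq 0$, and $p\gamma + q < 2-2n$ are strictly satisfied, and (ii) the resulting bound on $\|\nabla(|\varphi|^{p/2}\rho^{q/2})\|_{L^2}^2$, after applying the unweighted Sobolev embedding $\|u\|_{L^{2\tau}} \leq C\|\nabla u\|_{L^2}$, controls a weighted norm $\|\varphi\|_{L^{p\tau}_{0, \gamma'}}$ for some $\gamma'$ slightly larger than $\gamma$. The right-hand side of the integration identity, namely
\[
\frac{np^2}{4(p-1)} \int_{\mathcal{X}} (1 - e^{tF}) \varphi |\varphi|^{p-2} \rho^q \, dV + C\frac{q(p+q)}{4(p-1)} \int_{\mathcal{X}} |\varphi|^p \rho^{q-2} \, dV,
\]
is then bounded uniformly in $t$ by H\"older's inequality, combined with the uniform $L^{p_k}$-bounds of Lemma \ref{lem:induc} (to control powers of $|\varphi|$) and the decay $|1 - e^{tF}| \leq C\rho^\beta$, with integrability of the residual weight factors guaranteed by the hypothesis $\gamma > \beta + 2$.

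The resulting uniform $L^{p\tau}_{0, \gamma'}$-bound on $\varphi$ is converted to the desired $L^1_{0,\gamma}$-bound via a final H\"older step, exploiting the positive gap $\gamma' - \gamma > 0$ to ensure the conjugate weight is integrable over $\mathcal{X}$.

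The principal obstacle is the delicate simultaneous balancing of the exponents $(p, q, p\tau)$ and the auxiliary weight $\gamma'$: the strict constraint $p\gamma + q < 2-2n$ in the integration identity forces $\gamma' > \gamma$ (but only infinitesimally), while the final H\"older conversion needs precisely this positive gap for weight integrability, and the $L^{p_k}$-bounds used to control the RHS must cover the powers of $|\varphi|$ produced by H\"older. The strict hypothesis $-1 < \gamma$ is what keeps all of these exponent windows simultaneously nonempty and allows the gap $\gamma' - \gamma > 0$ to be made arbitrarily small in a quantitatively controlled way.
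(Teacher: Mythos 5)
Your proposal is substantially more complicated than what the paper does, and the final step contains a sign error. The paper's proof of Lemma~\ref{lem:L^p2} is a one-line application of H\"older's inequality: choose $p_0$ as in Lemma~\ref{lem:p_0}, which --- because $-1 < \gamma$ (together with $\gamma > \beta + 2$) --- can in addition be arranged to satisfy $p_0 < -2n/\gamma$. Then with $r$ conjugate to $p_0$ and $s = -r(\gamma + 2n)$,
\[
\norm{\varphi}_{L^1_{0,\gamma}} = \int_{\mathcal{X}} |\varphi|\,\rho^{-\gamma - 2n}\,dV
\leqslant \norm{\varphi}_{L^{p_0}} \Bigl(\int_{\mathcal{X}} \rho^{s}\,dV\Bigr)^{1/r},
\]
and $p_0 < -2n/\gamma$ is precisely the condition that $s < -2n$, so $\int_\mathcal{X}\rho^s\,dV<\infty$ by the $O(\rho^{2n})$ volume growth. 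The weighted integration identity and weighted Sobolev inequality that precede this lemma are not invoked here; they serve to drive the iteration in the lemma that follows, for which Lemma~\ref{lem:L^p2} supplies the base case, and that base case comes directly from the already-established unweighted $L^{p_0}$ estimate.

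The concrete step of your proposal that fails is the ``final H\"older conversion.'' You correctly observe that the constraint $p\gamma + q < 2 - 2n$ corresponds to $\gamma' := -q/p - 2n/(p\tau) > \gamma$, but you then claim the positive gap $\gamma' - \gamma > 0$ is what makes the conjugate weight integrable. Tracking exponents gives
\[
\int_\mathcal{X} |\varphi|\,\rho^{-\gamma - 2n}\,dV
\leqslant \norm{\varphi}_{L^{p\tau}_{0,\gamma'}}\Bigl(\int_\mathcal{X} \rho^{(p\tau)'(\gamma'-\gamma)-2n}\,dV\Bigr)^{1/(p\tau)'},
\]
where $(p\tau)'$ is conjugate to $p\tau$; for the weight integral to converge one needs $(p\tau)'(\gamma'-\gamma)-2n < -2n$, i.e.\ $\gamma' < \gamma$ --- the opposite sign. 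This is the statement that a $\gamma'$-weighted bound with $\gamma' > \gamma$ is a \emph{weaker} piece of information, and cannot by itself dominate the $\gamma$-weighted $L^1$ norm. So the chain of inequalities you describe cannot close. The role that $-1 < \gamma$ actually plays is the simpler one above: it keeps the window of admissible $p_0$ in Lemma~\ref{lem:p_0} nonempty while still allowing $p_0 < -2n/\gamma$.
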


\begin{proof}
Let $p_0$ be chosen from Lemma \ref{lem:p_0}. Because $-1 < \gamma$, we may also ensure that $p_0$ satisfies 
\[
p_0  < \frac{-2n}{\gamma}.
\]
Define $r, s$ by the relations $1/p_0 + 1/r = 1$ and $s = -r(\gamma + 2n)$. Then by H\"older's inequality, we find that 
\[
\norm{\varphi}_{L_{0,\gamma}^1} = \int_{\mathcal{X}} |\varphi| \rho^{-\gamma - 2n} dV \leqslant \norm{\varphi}_{L^{p_0}} \left[\int_{\mathcal{X}} \rho^s dV \right]^{1/r}.
\]
The choice of $p_0$ satisfying $p_0\gamma > -2n $ ensures that $s < -2n$ so that the integral $\int_{\mathcal{X}} \rho^s dV$ exists. The result now follows from Lemma \ref{lem:p_0}.  
\end{proof}

By techniques similar to those used in Lemma \ref{lem:induc}, one can use the previous  lemmas to prove the following. 
 
\begin{lemma}
Let $\gamma$ be a weight satisfying $\beta + 2 < \gamma$ and $-1 < \gamma$. With $\tau = n/(n-1)$, for each integer $k \geqslant 0$, let $p_k = \tau^k$. Then there are constants $A,B > 0$ such that any solution $\varphi \in C_{\gamma}^\infty$ to \eqref{eqn:*_t} satisfies 
\[
\norm{\varphi}_{L_{0,\gamma}^{p_k}} \leqslant A(B p_k)^{-n/p_k}. 
\]
\end{lemma}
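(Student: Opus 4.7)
The plan is to adapt the Moser iteration of Lemma \ref{lem:induc} to the weighted setting, using the weighted Sobolev-type estimate for solutions of \eqref{eqn:*_t} (stated just above the lemma) in place of the unweighted versions. The base case is provided by Lemma \ref{lem:L^p2}, which gives the uniform bound $\norm{\varphi}_{L_{0,\gamma}^1} \leqslant C$. For the inductive step, I would apply the weighted Sobolev-type estimate with $p = p_k$ to pass from an $L_{0,\gamma}^{p_k}$-bound to an $L_{0,\gamma}^{\tau p_k} = L_{0,\gamma}^{p_{k+1}}$-bound, tracking the growth of constants carefully.

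First I would observe that $\norm{1-e^{tF}}_{L_{0,\gamma}^{p_k}}$ is bounded uniformly by a constant $D$ depending only on $\mathcal{X}, \omega, F$, using that $F \in C_\beta^\infty(\mathcal{X})$ and $\gamma > \beta+2$ so that $\rho^{-\gamma}F$ is integrable against $\rho^{-2n}dV$ to every sufficient power. With $C$ the constant of the weighted Sobolev-type estimate, I would then choose $B > 1$ so large that $\sqrt[3]{B} \geqslant CDn2^n \tau^{n-1}$, mirroring exactly the choice in Lemma \ref{lem:induc}, and subsequently choose $A > 1$ large enough that the desired bound $\norm{\varphi}_{L_{0,\gamma}^{p_k}} \leqslant A(Bp_k)^{-n/p_k}$ holds for the initial values of $k$.

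Given the inductive hypothesis, the weighted Sobolev-type estimate with $p = p_k \geqslant 2$ yields
\[
\norm{\varphi}_{L_{0,\gamma}^{p_{k+1}}}^{p_k} \leqslant Cp_k \bigl(\norm{\varphi}_{L_{0,\gamma}^{p_k-1}}^{p_k-1} + \norm{\varphi}_{L_{0,\gamma}^{p_k}}^{p_k}\bigr).
\]
The second summand is bounded directly by the inductive hypothesis. The first summand I would control by the log-convexity of $L^q$-norms on the weighted measure space $(\mathcal{X}, \rho^{-2n}dV)$, interpolating $L_{0,\gamma}^{p_k-1}$ between $L_{0,\gamma}^1$ (controlled by Lemma \ref{lem:L^p2}) and $L_{0,\gamma}^{p_k}$ (controlled by the inductive hypothesis). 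The remainder of the argument is then the same arithmetic as in Lemma \ref{lem:induc}: one uses $p_k^{1/p_k} < 2$ together with $B^{n/p_k - 1} < B^{-1/3}$ to reduce the resulting expression to the target form $A(Bp_{k+1})^{-n/p_{k+1}}$.

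The main obstacle is the mixed-exponent term $\norm{\varphi}_{L_{0,\gamma}^{p_k-1}}^{p_k-1}$ on the right side of the weighted Sobolev-type estimate, which has no counterpart in the unweighted inequality used for Lemma \ref{lem:induc} and which forces the extra interpolation step just described. A related subtlety is that the weighted Sobolev-type estimate requires $p_k \geqslant 2$ and $p_k \geqslant (2-2n)/\gamma$, so the iteration cannot be initiated at $p_0 = 1$; one instead begins the inductive step at $k_0 = \lceil \log_\tau \max\{2, (2-2n)/\gamma\}\rceil$, with the finitely many smaller cases absorbed into the choice of $A$ via Lemma \ref{lem:L^p2} and the same interpolation of $L^q$-norms between $L_{0,\gamma}^1$ and $L_{0,\gamma}^{p_{k_0}}$.
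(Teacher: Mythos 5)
The paper itself gives no proof here, stating only that the result follows ``by techniques similar to Lemma \ref{lem:induc}''; your reconstruction is a sensible attempt at filling this in, and you have correctly identified the two genuine wrinkles (the mixed exponent $\norm{\varphi}_{L^{p-1}_{0,\gamma}}^{p-1}$ and the lower bound $p \geqslant \max\{2, (2-2n)/\gamma\}$). However, there is a circularity in how you propose to initiate the iteration, and this is not a cosmetic point — without it the induction never gets off the ground.

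You write that the cases $k < k_0$ (and implicitly $k = k_0$) are ``absorbed into the choice of $A$ via Lemma \ref{lem:L^p2} and the same interpolation of $L^q$-norms between $L^1_{0,\gamma}$ and $L^{p_{k_0}}_{0,\gamma}$.'' But the iteration needs an a priori bound on $\norm{\varphi}_{L^{p_{k_0}}_{0,\gamma}}$ as its base (it appears on the right side of the weighted Sobolev estimate at $p=p_{k_0}$), and you cannot obtain that bound by interpolating between $L^1_{0,\gamma}$ and $L^{p_{k_0}}_{0,\gamma}$: you have only the $L^1_{0,\gamma}$ endpoint, and the $L^{p_{k_0}}_{0,\gamma}$ endpoint is precisely what is missing. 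The correct repair is to extend the H\"older argument in the proof of Lemma \ref{lem:L^p2} to exponents $q>1$: with $p_0$ chosen as there (so $p_0 < -2n/\gamma$ when $\gamma < 0$), one gets $\norm{\varphi}_{L^q_{0,\gamma}}^q \leqslant \norm{\varphi}_{L^{p_0}}^q \left[\int_{\mathcal{X}}\rho^{-r(\gamma q + 2n)}\,dV\right]^{1/r}$ with $q/p_0 + 1/r = 1$, and the integral converges under the same condition $p_0\gamma > -2n$; this gives a uniform $L^q_{0,\gamma}$-bound for all $1\leqslant q < p_0$. Since $p_{k_0} = \tau^{k_0}$ is the first term of the geometric sequence at or above $\max\{2,(2-2n)/\gamma\}$, one checks $p_{k_0} < \tau\cdot(2-2n)/\gamma = -2n/\gamma$ (when $\gamma<0$), so $p_0$ can be chosen with $p_{k_0} < p_0 < -2n/\gamma$, and the base case and the intermediate $k<k_0$ are covered.

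Two smaller points. First, once you invoke the inductive hypothesis after the interpolation
\[
\norm{\varphi}_{L^{p-1}_{0,\gamma}}^{p-1} \leqslant \norm{\varphi}_{L^1_{0,\gamma}}^{1/(p-1)}\,\norm{\varphi}_{L^{p}_{0,\gamma}}^{p(p-2)/(p-1)},
\]
a residual factor of the form $\bigl[D(Bp_k)^n/A^{p_k}\bigr]^{1/(p_k-1)}$ appears, and you must choose $A$ large enough (relative to $B$, $D$, $n$, $p_{k_0}$) that this is $\leqslant 1$; this is a polynomial-versus-exponential comparison, so it is satisfiable, but it is a genuine step you should make explicit. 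Second, the claim that ``the remainder of the argument is then the same arithmetic as in Lemma \ref{lem:induc}: one uses $p_k^{1/p_k}<2$ together with $B^{n/p_k-1}<B^{-1/3}$'' is not quite right here: the inequality $B^{n/p_k-1}<B^{-1/3}$ in Lemma \ref{lem:induc} relies on $p_0 > 3n/2$, whereas in the present lemma $p_{k_0}$ can be as small as $2$, so that estimate is unavailable. With the bracket controlled as above, the correct endgame reduces simply to $B \geqslant 2C\tau^{n-1}$ (up to an inessential constant), without the $B^{-1/3}$ trick.
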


A $C^0_{\gamma}$-estimate now follows immediately. 

\begin{proposition}[A $C^0_{\gamma}$-estimate]\label{prop:C^0_gamma}
Let $\gamma$ be a weight satisfying $\beta + 2 < \gamma$ and $-1 <  \gamma$.  There is a constant $C$ such that any solution $\varphi \in C^\infty_{\gamma}(\mathcal{X})$ to \eqref{eqn:*_t} satisfies $\norm{\varphi}_{C^0_{\gamma}} \leqslant C$. 
\end{proposition}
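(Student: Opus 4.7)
The plan is to deduce the estimate directly from the weighted $L^{p_k}_{0,\gamma}$-bound of the preceding lemma by letting $p_k \to \infty$, in exact analogy with the passage from Lemma \ref{lem:induc} to the unweighted $C^0$-estimate proved earlier in this section. The heavy analytic work has already been carried out in the Moser-type iteration of the preceding lemma; this proposition is only the final limiting step, and I do not expect any real obstacle.

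First I would rewrite the weighted $L^p$-norm as an ordinary $L^p$-norm: by the very definition of the weighted Sobolev norm,
\[
\norm{\varphi}_{L^{p}_{0,\gamma}} = \left(\int_{\mathcal{X}} \bigl|\rho^{-\gamma}\varphi\bigr|^{p} \rho^{-2n}\, dV \right)^{1/p} = \norm{h}_{L^p(\mu)},
\]
where $h := \rho^{-\gamma}\varphi$ is continuous on $X$ and $d\mu := \rho^{-2n}\,dV$ is a $\sigma$-finite Radon measure on $\mathcal{X}$ with full support. Because $\varphi \in C^\infty_\gamma(\mathcal{X})$, the quantity $\norm{h}_{L^\infty(\mu)} = \sup_\mathcal{X}|\rho^{-\gamma}\varphi|$ is finite and equals $\norm{\varphi}_{C^0_\gamma(\mathcal{X})}$ by definition.

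Next I would invoke the standard lower-semicontinuity estimate $\liminf_{p \to \infty} \norm{h}_{L^p(\mu)} \geqslant \norm{h}_{L^\infty(\mu)}$: for each $\epsilon > 0$ the open set $\{|h| > \norm{h}_{L^\infty} - \epsilon\}$ is nonempty and contains a subset of finite positive $\mu$-measure $m_\epsilon$, so $\norm{h}_{L^p(\mu)} \geqslant (\norm{h}_{L^\infty(\mu)} - \epsilon)\, m_\epsilon^{1/p}$, and $m_\epsilon^{1/p} \to 1$ as $p \to \infty$. Combining this with the preceding lemma applied along $p_k = \tau^k$ gives
\[
\norm{\varphi}_{C^0_\gamma(\mathcal{X})} \leqslant \liminf_{k \to \infty} \norm{\varphi}_{L^{p_k}_{0,\gamma}} \leqslant \liminf_{k \to \infty} A(Bp_k)^{-n/p_k} = A,
\]
since $(Bp_k)^{-n/p_k} \to 1$. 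The constants $A$ and $B$ coming from the preceding lemma depend only on $\mathcal{X}, \omega, F, \gamma$ and not on the particular solution $\varphi$ or on $t$, so we may take $C = A$ as required.
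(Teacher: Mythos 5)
Your proposal is correct and follows the same approach as the paper (the paper just says the estimate ``follows immediately'' from the preceding lemma, exactly by letting $p_k \to \infty$). You simply supply the elementary measure-theoretic detail that $\liminf_{p\to\infty}\norm{h}_{L^p(\mu)} \geqslant \norm{h}_{L^\infty(\mu)}$, which the paper takes for granted; this is valid here since $\mu = \rho^{-2n}\,dV$ is $\sigma$-finite with full support and $h = \rho^{-\gamma}\varphi$ is continuous and bounded.
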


\subsection{A weighted $C^3$-estimate}

The techniques of Theorem 8.6.11 from \cite{joyce} (which include a priori estimates of elliptic operators on domains in $\mathbb{C}^n$) can be used to show that a $C^0_\gamma$-estimate implies a $C^3_\gamma$-estimate. 

\begin{proposition}\label{prop:pass}
If $\gamma \geqslant \beta + 2$ is a weight such that we have an estimate of the form $\norm{\varphi}_{C_\gamma^0} \leqslant C$, then we also have an estimate of the form $\norm{\varphi}_{C_\gamma^3} \leqslant C$, and hence by weighted bootstrapping arguments involving the weighted Schauder estimates of Theorem \ref{thm:schauder}, we also have estimates of the form $\norm{\varphi}_{C_\gamma^{k,\alpha}} \leqslant C$ for each $k,\alpha$. 
\end{proposition}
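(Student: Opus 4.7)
\noindent \emph{Proof proposal for Proposition \ref{prop:pass}}. The plan is to split the orbifold into a compact piece and a conical end and treat each separately, with the main technical step being a rescaling argument on the conical end that reduces the weighted estimate to a standard interior estimate for the complex Monge-Amp\'ere equation on a fixed ball in $\mathbb{C}^n$. Once a $C_\gamma^3$-estimate is established, the $C_\gamma^{k,\alpha}$-estimates follow by differentiating the equation and invoking the weighted Schauder estimates of Theorem \ref{thm:schauder} iteratively.

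First I would handle the compact region. Fix a compact set $K''$ containing $K'$ in its interior, with $K' \subset \mathrm{int}(K'') \subset K'' \subset X$ compact. On $K''$ the radius function $\rho$ is bounded between two positive constants, so the weighted norm $\|\cdot\|_{C_\gamma^k(K'')}$ is equivalent to the unweighted one. The unweighted $C^{k,\alpha}$-estimate already obtained in the previous subsection therefore gives the weighted bound on $K''$ directly, with no further work.

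The main step is the estimate on the conical end $X \setminus K''$. Fix a point $x$ with $R = \rho(x)$ large and work in the identification $\Phi : (1,\infty) \times \Sigma \to X \setminus K'$. Since all orbifold points lie in $K'$, a neighborhood $U_x$ of $x$ of Euclidean size comparable to $R$ is a smooth coordinate patch diffeomorphic to a ball in $\mathbb{C}^n$ via complex coordinates $z^j$ adapted to the cone. Consider the dilation $z \mapsto Rz$; it identifies $U_x$ with a fixed coordinate ball $B_1 \subset \mathbb{C}^n$. Define the rescaled function $\tilde\varphi(y) = R^{-\gamma}\varphi(Ry)$, the rescaled metric $\tilde g = R^{-2}(\mathrm{dil}_R)^*g$, and the rescaled right-hand side $\tilde F(y) = F(Ry)$. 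Because $g$ is asymptotically conical of negative rate $\lambda_g < 0$ and the cone metric is scale-invariant under $r \mapsto Rr$, we have $\tilde g \to g_0\vert_{B_1}$ in $C^\infty$ as $R \to \infty$, with uniform $C^{k,\alpha}$-bounds on $B_1$ independent of $R$. The hypothesis $F \in C_\beta^\infty(\mathcal{X})$ with $\beta < 0$ similarly gives uniform $C^{k,\alpha}$-bounds on $\tilde F$ on $B_1$. The Monge-Amp\'ere equation \eqref{eqn:*_t} rescales to an equation of the same form for $\tilde\varphi$ with respect to $\tilde g$, and the hypothesis $\|\varphi\|_{C_\gamma^0} \leqslant C$ translates, by choice of exponent $R^{-\gamma}$, into $\|\tilde\varphi\|_{C^0(B_1)} \leqslant C$.

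With $\tilde\varphi$ uniformly bounded on $B_1$ and satisfying a complex Monge-Amp\'ere equation with smooth coefficients uniformly equivalent to those of the Euclidean setting, the classical interior estimates (the unweighted $C^0 \Rightarrow C^2 \Rightarrow C^3$ machinery of Yau together with Evans-Krylov, as summarized for instance in Theorem 8.6.11 of \cite{joyce}) yield
\[
\|\tilde\varphi\|_{C^3(B_{1/2})} \leqslant C',
\]
with $C'$ independent of $R$. Unscaling converts derivatives: $|\nabla^j \varphi(x)|_g \leqslant C' R^{\gamma - j}$ for $0 \leqslant j \leqslant 3$. Since $x$ was an arbitrary point with $\rho(x) = R$ large, this is precisely the $C_\gamma^3$-bound on $X \setminus K''$. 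Combining with the bound on $K''$ gives $\|\varphi\|_{C_\gamma^3(\mathcal{X})} \leqslant C$. To pass from $C_\gamma^3$ to $C_\gamma^{k,\alpha}$, I would differentiate the equation \eqref{eqn:*_t} once in local coordinates, writing the resulting equation for $\partial_\ell \varphi$ as a linear elliptic equation $\Delta_\varphi(\partial_\ell \varphi) = h$ where the coefficients of $\Delta_\varphi$ and the function $h$ lie in weighted H\"older spaces controlled by the $C_\gamma^3$-bound (more precisely, $h$ lies in $C_{\gamma-3}^{0,\alpha}$, and the coefficients of $\Delta_\varphi$ are bounded perturbations of those of $\Delta$). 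Applying the weighted Schauder estimate of Theorem \ref{thm:schauder} gives a $C_\gamma^{3,\alpha}$-bound, and iterating the argument raises the order of differentiability indefinitely.

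The main obstacle, and the point that requires the most care, is verifying that the rescaled metrics and right-hand sides genuinely have $R$-independent $C^{k,\alpha}$ bounds on $B_1$; this is where the asymptotically conical hypothesis is used essentially, since it guarantees that the cone dilation symmetry is only mildly broken by $g$ and $F$ at rate $R^{\lambda_g}$ and $R^\beta$ respectively, both of which tend to zero. Once that is in place, the remaining steps are routine applications of interior regularity and the weighted Schauder estimates already available to us. \hfill $\Box$
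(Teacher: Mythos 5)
The overall plan---peel off the compact core, rescale annuli on the conical end to a fixed ball $B_1 \subset \mathbb{C}^n$, apply interior estimates there, and unscale---is the right one, and it matches the paper's appeal to \cite[Theorem~8.6.11]{joyce} (whose phrase ``a priori estimates of elliptic operators on domains in $\mathbb{C}^n$'' is a hint). But there is a concrete gap in the conical-end step. The rescaling $\tilde\varphi(y) = R^{-\gamma}\varphi(Ry)$, $\tilde g = R^{-2}(\mathrm{dil}_R)^*g$ does \emph{not} transform \eqref{eqn:*_t} into an equation of the same form: since $\partial\bar\partial\tilde\varphi = R^{2-\gamma}(\partial\bar\partial\varphi)(R\,\cdot)$, the rescaled equation reads
\[
\bigl(\tilde\omega + R^{\gamma-2}\sqrt{-1}\ddb\tilde\varphi\bigr)^n = e^{t\tilde F}\tilde\omega^n,
\]
with a degenerating parameter $R^{\gamma-2}\to 0$ in front of the Hessian. (The scaling that does preserve the form is $R^{-2}\varphi(R\,\cdot)$, which is $R^{\gamma-2}$ times yours.) Correspondingly, your $\tilde\varphi$ does not inherit a uniform $C^2$ bound on $B_1$: from the unweighted estimate $\|\nabla^2\varphi\|_{C^0}\leqslant C$ you get $\|\nabla^2\tilde\varphi\|_{C^0(B_1)}\leqslant CR^{2-\gamma}\to\infty$. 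So the hypotheses for the nonlinear interior machinery (Yau's $C^2$ estimate, Calabi's $C^3$ estimate, Evans--Krylov) are not met for $\tilde\varphi$, and the conclusion $\|\tilde\varphi\|_{C^3(B_{1/2})}\leqslant C'$ with $C'$ independent of $R$ does not follow as stated. (Moreover, even for the correctly scaled $R^{-2}\varphi(R\,\cdot)$, the nonlinear interior estimates bound $\|\cdot\|_{C^{2,\alpha}}$ in terms of $\|\nabla^2\cdot\|_{C^0}$ and the ellipticity, not linearly in $\|\cdot\|_{C^0}$, so one would not obtain the needed factor $R^{\gamma-2}$.)

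The remedy is to reduce to a \emph{linear} elliptic equation before rescaling, as in \cite{joyce}. Writing $\Psi(s) = \log(\omega_{s\varphi}^n/\omega^n)$ and integrating $\Psi'(s) = \Delta_{\omega_{s\varphi}}\varphi$ from $s=0$ to $1$ gives
\[
\hat L\varphi := \Bigl(\int_0^1 g_{s\varphi}^{j\bar k}\,ds\Bigr)\partial_j\partial_{\bar k}\varphi = tF,
\]
where $\hat L$ is a linear second-order operator whose coefficients are uniformly elliptic (by Proposition~\ref{prop:C^2}, since $\omega_{s\varphi}\geqslant C^{-1}\omega$ for all $s\in[0,1]$) and lie in $C^{k,\alpha}$ with $R$-independent bounds on each rescaled ball by the unweighted $C^{k,\alpha}$ estimates already proved. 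Now your rescaling $\tilde\varphi = R^{-\gamma}\varphi(R\,\cdot)$ is well adapted: it transforms the equation into $\tilde L\tilde\varphi = R^{2-\gamma}t\tilde F$, whose right-hand side has $C^{k,\alpha}(B_1)$-norm $O(R^{2-\gamma+\beta}) = O(1)$ precisely because $\gamma\geqslant\beta+2$, while $\|\tilde\varphi\|_{C^0(B_1)}\leqslant C$. Linear interior Schauder then yields $\|\tilde\varphi\|_{C^{k+2,\alpha}(B_{1/2})}\leqslant C$ uniformly in $R$, and unscaling gives $|\nabla^j\varphi|\leqslant C\rho^{\gamma-j}$. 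Alternatively one can differentiate \eqref{eqn:*_t} once (as you do in the higher-order bootstrap) to get $\Delta_\varphi(\partial_\ell\varphi) = h$ and rescale that. Everything else in your write-up---the treatment of the compact core, the uniform convergence $\tilde g\to g_0$, the decay of $\tilde F$, the unscaling bookkeeping, and the $C_\gamma^3\Rightarrow C_\gamma^{k,\alpha}$ bootstrap via Theorem~\ref{thm:schauder}---is fine.
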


Moreover, the next proposition asserts that, as soon as we have a $C^0_\gamma$-estimate, we may decrease the weight $\gamma$ to obtain an even stronger weighted estimate, so that we may continue until we obtain a $C^0_{\beta+2}$-estimate. We first require a lemma, the proof of which can be found in \cite{joyce} and involves choosing holomorphic coordinates and higher order estimates on $\varphi$. 

\begin{lemma}\label{lem:estlap}
For a solution $\varphi \in C_\gamma^\infty(\mathcal{X})$ of \eqref{eqn:*_t}, we have an estimate of the form 
\[
|\Delta \varphi + e^{tF} - 1| \leqslant C |\sqrt{-1} \ddb \varphi|^2 
\]
where $\Delta$ denotes the Laplacian and Levi-Civita connection of either $\omega$ or $\omega_\varphi$, since the corresponding metrics are equivalent by Proposition \ref{prop:C^2}. 
\end{lemma}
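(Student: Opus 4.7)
The plan is to work pointwise in a local holomorphic coordinate chart and exploit the algebraic expansion of the determinant appearing in the Monge--Amp\`ere equation \eqref{eqn:*_t}. Since Proposition \ref{prop:C^2} tells us that $\omega$ and $\omega_\varphi$ are uniformly equivalent, the norms $|\sqrt{-1}\ddb\varphi|$ and the Laplacians $\Delta_\omega\varphi$, $\Delta_{\omega_\varphi}\varphi$ computed with respect to either metric agree up to a uniform multiplicative constant; it therefore suffices to verify the estimate pointwise using $\omega$, and the version with $\omega_\varphi$ then follows automatically.

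First I would rewrite \eqref{eqn:*_t} locally as $\det(I + A) = e^{tF}$, where $A = (g^{j\bar\ell}\varphi_{k\bar\ell})$ is the endomorphism obtained by raising $\ddb\varphi$ with the inverse of $g$. Expanding the characteristic polynomial in terms of the elementary symmetric functions $\sigma_k$ of the eigenvalues of $A$ yields
\[
e^{tF} \;=\; \det(I+A) \;=\; 1 + \sigma_1(A) + \sigma_2(A) + \cdots + \sigma_n(A).
\]
The crucial observation is that $\sigma_1(A) = \operatorname{tr}(A) = g^{j\bar k}\varphi_{j\bar k}$, which coincides with $\Delta\varphi$ up to the conventional sign appearing in the statement. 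Rearranging,
\[
e^{tF} - 1 - \sigma_1(A) \;=\; \sum_{k=2}^{n} \sigma_k(A).
\]

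Second I would estimate the higher-order symmetric functions. Each $\sigma_k(A)$ with $k\geq 2$ is a homogeneous polynomial of degree $k$ in the eigenvalues of $A$, so one has a pointwise bound $|\sigma_k(A)| \leq C_k |A|^k$. Proposition \ref{prop:C^2} supplies a uniform upper bound $|A| \leq M$, and consequently $|A|^k \leq M^{k-2}|A|^2$ for $k \geq 2$. Summing,
\[
\bigl|e^{tF} - 1 - \sigma_1(A)\bigr| \;\leq\; C|A|^2 \;\leq\; C'|\sqrt{-1}\ddb\varphi|^2,
\]
where the last step absorbs the uniformly bounded factors of $g^{-1}$ relating $|A|$ to $|\sqrt{-1}\ddb\varphi|$. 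Identifying $\sigma_1(A)$ with $\pm\Delta\varphi$ gives the claimed inequality.

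The proof is almost entirely algebraic once one has Proposition \ref{prop:C^2} in hand, so I do not anticipate any genuine analytic obstacle. The only point requiring some care is bookkeeping: one must track the sign convention relating $\operatorname{tr}(A)$ and $\Delta\varphi$ (which is why the statement involves $\Delta\varphi + e^{tF} - 1$ rather than $\Delta\varphi - e^{tF} + 1$) and verify that the passage between the two Laplacians $\Delta_\omega$ and $\Delta_{\omega_\varphi}$ preserves the estimate, which follows immediately from the uniform equivalence of metrics already available.
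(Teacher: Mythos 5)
Your argument is correct and matches the approach the paper indicates (via its reference to Joyce): expand $\det(I+A)$ in elementary symmetric functions in a holomorphic chart, identify $\sigma_1(A)$ with $\pm\Delta\varphi$, and use the uniform bound $|A|\leqslant M$ coming from Proposition \ref{prop:C^2} to absorb all terms $\sigma_k(A)$ with $k\geqslant 2$ into $C|A|^2 \leqslant C'|\sqrt{-1}\ddb\varphi|^2$. One detail worth making explicit rather than attributing to ``immediate'' metric equivalence: the passage from $\Delta_\omega$ to $\Delta_{\omega_\varphi}$ works for $\varphi$ specifically (not a general function) because $(\Delta_{\omega_\varphi}-\Delta_\omega)\varphi = -g^{j\bar p}\varphi_{p\bar q}\,g_\varphi^{q\bar k}\varphi_{j\bar k}$, which is manifestly quadratic in $\ddb\varphi$ with coefficients uniformly bounded by Proposition \ref{prop:C^2} and hence also absorbs into the right-hand side.
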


\begin{lemma}\label{lem:C^0_gamma}
Let $\gamma \geqslant \beta + 2$ be a weight such that we have an estimate of the form $\norm{\varphi}_{C_\gamma^0} \leqslant C$. If $\gamma' = \max\{2\gamma -2, \beta + 2\}$, then we also have an estimate of the form $\norm{\varphi}_{C_{\gamma'}^0} \leqslant C$. 
\end{lemma}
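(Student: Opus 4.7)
The plan is to use the quadratic error bound of Lemma \ref{lem:estlap} to upgrade the decay of $\Delta\varphi$ beyond what is a priori available, then invert the Laplacian on the faster‐decaying weighted space using Corollary \ref{cor:isom}, and finally identify the inverse image with $\varphi$ itself by the injectivity of Lemma \ref{lem:inj}.

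First I would invoke Proposition \ref{prop:pass} to promote the hypothesis $\|\varphi\|_{C^0_\gamma}\le C$ to weighted Schauder bounds $\|\varphi\|_{C^{k,\alpha}_\gamma}\le C_k$ for every $k$, so in particular $|\sqrt{-1}\ddb\varphi|_g = O(\rho^{\gamma-2})$ and therefore $|\sqrt{-1}\ddb\varphi|_g^2 = O(\rho^{2\gamma-4})$. Combining this with Lemma \ref{lem:estlap} and the fact that $F\in C^\infty_\beta(\mathcal{X})$ is uniformly bounded (so that a Taylor expansion yields $|1-e^{tF}|\le C\rho^\beta$), one obtains
\[
|\Delta\varphi|\;\le\; C(\rho^\beta + \rho^{2\gamma-4}) \;\le\; C\,\rho^{\gamma'-2}
\]
for $\gamma'=\max\{2\gamma-2,\beta+2\}$. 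The same argument applied to higher derivatives of the equation (using that $\varphi$ is already known in every $C^{k,\alpha}_\gamma$) shows $\Delta\varphi\in C^{0,\alpha}_{\gamma'-2}(\mathcal{X})$ with norm controlled by the data $(\mathcal{X},\omega,F)$.

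Next, I would check that $\gamma'$ lies in the safe range $(-2n+2,0)$. Since $\gamma \geq \beta + 2 > -2n+2$, we have $\gamma'\ge \beta+2 > -2n+2$; and since $\gamma<0$ (the lemma is applied starting from Proposition \ref{prop:C^0_gamma}, where $\gamma\in(-1,0)$, with each iteration producing an even more negative weight), we have $2\gamma-2<-2<0$ and $\beta+2<0$, hence $\gamma'<0$. By the Remark after Theorem \ref{thm:fredholm}, $(-2n+2,0)\cap \mathcal{P}=\emptyset$, so Corollary \ref{cor:isom} supplies a unique $\psi\in C^{2,\alpha}_{\gamma'}(\mathcal{X})$ with $\Delta\psi=\Delta\varphi$ and
\[
\|\psi\|_{C^{2,\alpha}_{\gamma'}}\;\le\; C\,\|\Delta\varphi\|_{C^{0,\alpha}_{\gamma'-2}}\;\le\; C',
\]
where $C'$ depends only on $\mathcal{X},\omega,F$.

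Finally, $\varphi-\psi$ is harmonic, and since $\gamma'\le\gamma$ (using $\gamma<2$, which follows from $\gamma<0$) we have $C^0_{\gamma'}\subset C^0_\gamma$ and hence $\varphi-\psi\in C^0_\gamma$. Because $\gamma<0$ and $\gamma\notin\mathcal{P}$, Lemma \ref{lem:inj} forces $\varphi=\psi$, so $\|\varphi\|_{C^0_{\gamma'}}\le \|\psi\|_{C^{2,\alpha}_{\gamma'}}\le C'$, as desired. The main subtlety is the bookkeeping of weights to guarantee that both Corollary \ref{cor:isom} (surjectivity at weight $\gamma'$) and Lemma \ref{lem:inj} (injectivity at weight $\gamma$) apply simultaneously without hitting an exceptional weight; everything else is a direct application of the quadratic estimate of Lemma \ref{lem:estlap} followed by inverting the Laplacian.
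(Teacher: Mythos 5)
Your proposal is correct and follows essentially the same path as the paper's own proof: promote the $C^0_\gamma$ bound to full weighted Schauder bounds via Proposition \ref{prop:pass}, deduce $|\sqrt{-1}\ddb\varphi|^2 = O(\rho^{2\gamma-4})$, feed this into Lemma \ref{lem:estlap} to get $\Delta\varphi = O(\rho^{\gamma'-2})$, and then invert the Laplacian using Corollary \ref{cor:isom}. The one thing you do that the paper leaves implicit is to spell out the final step as two pieces: surjectivity at weight $\gamma'$ produces a $\psi \in C^{2,\alpha}_{\gamma'}$ with $\Delta\psi = \Delta\varphi$, and injectivity at weight $\gamma$ (which contains $\gamma'$ since $\gamma' \le \gamma$) forces $\psi = \varphi$; the paper simply cites the isomorphism. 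Your careful weight bookkeeping — verifying $\gamma, \gamma' \in (-2n+2,0)$ and hence off $\mathcal{P}$, and noting that $\gamma < 0$ is the operative hidden constraint (valid in the context of Proposition \ref{prop:C^0_gamma} and its iterates) — is a genuine improvement in rigor over the paper's terse treatment.
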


\begin{proof}
The idea is to use the previous lemma and the fact that the Laplacian \eqref{eqn:laplacian} is an isomorphism. In particular, since we are assuming we have an estimate of the form $\norm{\varphi}_{C_\gamma^0} \leqslant C$, Proposition \ref{prop:pass} shows that we actually have an estimate of the form $\norm{\varphi}_{C_\gamma^{5,\alpha}} \leqslant C$. From this, we conclude that $\sqrt{-1} \ddb \varphi \in C_{\gamma-2}^{3,\alpha}$, and thus that $|\sqrt{-1} \ddb \varphi|^2 \in C_{2(\gamma-2)}^{3,\alpha}$.  Lemma \ref{lem:estlap} establishes an estimate of the form 
\[
\Delta \varphi = O(\rho^{2\gamma -4}) + O(\rho^{\beta}).
\]
Corollary \ref{cor:isom} then gives the desired estimate. 
\end{proof}

\begin{proposition}[A $C^0_{\beta + 2}$-estimate]\label{prop:C^0_beta+2}
There is a constant $C$ such that any solution $\varphi \in C_{\gamma}^\infty$ to \eqref{eqn:*_t} satisfies $\norm{\varphi}_{C_{\beta +2}^0} \leqslant C$. 
\end{proposition}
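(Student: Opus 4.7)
\noindent\emph{Proof proposal.} The plan is to iterate Lemma~\ref{lem:C^0_gamma} finitely many times, using Proposition~\ref{prop:C^0_gamma} as the seed. By the standing hypothesis, $\varphi \in C^\infty_{\gamma}(\mathcal{X})$ for some $\gamma$ with $\beta + 2 \leqslant \gamma < 0$, and since $\beta + 2 < 0$ we may choose an auxiliary weight $\gamma_0$ with $\max\{\gamma, -1\} < \gamma_0 < 0$. Then $\varphi \in C^\infty_{\gamma_0}(\mathcal{X})$ by the continuous inclusion of weighted spaces (using $\gamma \leqslant \gamma_0$), and the constraints $\gamma_0 > \beta+2$ and $\gamma_0 > -1$ place $\gamma_0$ within the range of Proposition~\ref{prop:C^0_gamma}, producing a uniform bound $\norm{\varphi}_{C^0_{\gamma_0}} \leqslant C_0$.

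Next, define inductively
\[
\gamma_{k+1} = \max\{2\gamma_k - 2,\; \beta + 2\},
\]
and apply Lemma~\ref{lem:C^0_gamma} at each step, which is legal because $\gamma_k \geqslant \beta + 2$ throughout the iteration. This yields a sequence of uniform bounds $\norm{\varphi}_{C^0_{\gamma_k}} \leqslant C_k$. It then suffices to check that $\gamma_k = \beta + 2$ for all sufficiently large $k$. For any weight $\gamma_k < 0$,
\[
(2\gamma_k - 2) - \gamma_k \;=\; \gamma_k - 2 \;<\; -2,
\]
so in each step where the iteration does not cap at $\beta+2$, the weight strictly decreases by more than two units. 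Since the sequence is bounded below by $\beta+2$, it must reach that value after at most $\lceil (\gamma_0 - \beta - 2)/2 \rceil + 1$ iterations. The corresponding final estimate $\norm{\varphi}_{C^0_{\beta+2}} \leqslant C$ is exactly what we want.

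The main obstacle is really not in this proposition itself but in the lemmas it marshals: the isomorphism property of the Laplacian supplied by Corollary~\ref{cor:isom}, the pointwise Laplacian estimate of Lemma~\ref{lem:estlap}, the initial weighted Moser iteration in Proposition~\ref{prop:C^0_gamma}, and the weighted Schauder bootstrapping in Proposition~\ref{prop:pass}. All of these feed directly into Lemma~\ref{lem:C^0_gamma}, and once they are in hand the present proposition is a bookkeeping argument whose only nontrivial ingredient is the elementary fact that the discrete dynamical system $\gamma \mapsto \max\{2\gamma - 2,\, \beta + 2\}$ terminates at $\beta + 2$ in finitely many steps when started at any $\gamma_0 < 0$.
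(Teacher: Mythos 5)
Your proposal is correct and takes essentially the same approach as the paper: seed with Proposition~\ref{prop:C^0_gamma}, then iterate Lemma~\ref{lem:C^0_gamma} via the recursion $\gamma_{k+1} = \max\{2\gamma_k - 2, \beta+2\}$ and observe that it stabilizes at $\beta+2$ in finitely many steps. You are a bit more explicit than the paper about bumping up to an auxiliary weight $\gamma_0 \in (\max\{\gamma,-1\},0)$ via the weighted-space inclusion and about the termination bound, but the underlying argument is identical.
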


\begin{proof}
Let $\gamma$ be a weight satisfying $\beta + 2 < \gamma$ and $-1 < \gamma$. Then Proposition \ref{prop:C^0_gamma} gives an estimate of the form $\norm{\varphi}_{C_\gamma^0} \leqslant C$. Define the sequence of weights $\gamma_0, \gamma_1, \ldots$ by the rule $\gamma_0 = \gamma$ and $\gamma_{i+1} = \max\{2\gamma_i - 2, \beta + 2\}$. Then for all sufficiently large $i$, we have $\gamma_i = \beta + 2$, and the previous lemma therefore gives an estimate of the form $\norm{\varphi}_{C_{\beta +2}^0} \leqslant C$.
\end{proof}

\subsection{Proof of Proposition \ref{prop:suff}(ii)} Proposition \ref{prop:C^0_beta+2} gives an estimate of the form $\norm{\varphi}_{C_{\beta +2}^0} \leqslant C$. Proposition \ref{prop:pass} then implies that we have an estimate of the form $\norm{\varphi}_{C_{\beta + 2}^{k,\alpha}} \leqslant C$ for each $k,\alpha$. Moreover, the metrics determined by $\omega$ and $\omega + \sqrt{-1} \ddb \varphi$ are equivalent by  Proposition \ref{prop:C^2}.

\section{Cases (i) and (iii) of Theorem \ref{thm:main}}\label{sec:cases}

It remains to discuss Theorem in the cases (i) and (iii), that is, if the right-hand side decays fast and slowly respectively. We require some preliminary results. 

\begin{lemma}\label{lem:laplaceexis}
Suppose $\beta$ satisfies $\beta < -n-1$ and $\beta + 2 \notin \mathcal{P}$. For any $ f \in C_\beta^\infty(\mathcal{X})$, there is a unique $u \in C^\infty(\mathcal{X}) \cap L_{1,1-n}^2(\mathcal{X})$ such that $\Delta u = f$, where $L_{1,1-n}^2(\mathcal{X})$ denotes the weighted Sobolev space given by the completion of the space of compactly supported smooth functions with respect to the weighted Sobolev norm 
\begin{align*}
\norm{v}_{L_{1,1-n}^2}^2 = \int_{\mathcal{X}} \left(|v|^2 \rho^{-2} + |\nabla v |^2 \right)\: dV.
\end{align*}
\end{lemma}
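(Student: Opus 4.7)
The plan is to reduce existence to the isomorphism of Corollary \ref{cor:isom} (possibly after replacing $\beta$ by a more favourable intermediate weight), and to prove uniqueness by a cutoff integration-by-parts. A useful preliminary observation is that if $u \in C^\infty_{\gamma+2}(\mathcal{X})$ for some weight $\gamma < -n-1$, then $u$ automatically lies in $L^2_{1,1-n}(\mathcal{X})$: the pointwise bound $|u| = O(\rho^{\gamma+2})$ gives $|u|^2 \rho^{-2} = O(\rho^{2\gamma + 2})$, which is integrable against $dV \sim \rho^{2n-1}\,d\rho$ precisely because $2\gamma + 2n + 1 < -1$, and the analogous check for $|\nabla u|^2$ is easier. (This is exactly why the hypothesis is $\beta < -n-1$.)

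For existence, first suppose $\beta \in (-2n,-n-1)$. Then Corollary \ref{cor:isom} directly furnishes a unique $u \in C^{k+2,\alpha}_{\beta+2}(\mathcal{X})$ for each $k$, hence a smooth $u \in C^\infty_{\beta+2}(\mathcal{X})$, and the preliminary observation places $u$ in $L^2_{1,1-n}$. If instead $\beta \leqslant -2n$, pick an intermediate weight $\gamma \in (-2n,-n-1)$ with $\gamma + 2 \notin \mathcal{P}$; this interval is non-empty for $n \geqslant 2$ and $\mathcal{P}$ is discrete, so such a $\gamma$ exists. Since $C^\infty_\beta(\mathcal{X}) \subseteq C^\infty_\gamma(\mathcal{X})$, Corollary \ref{cor:isom} applied at weight $\gamma$ yields a smooth $u \in C^\infty_{\gamma+2}(\mathcal{X}) \subset L^2_{1,1-n}(\mathcal{X})$ with $\Delta u = f$.

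For uniqueness, let $u_1, u_2$ be two such solutions and set $v = u_1 - u_2$, which is smooth, harmonic, and in $L^2_{1,1-n}(\mathcal{X})$. Choose cutoff functions $\chi_R \in C^\infty_c(\mathcal{X})$ with $0 \leqslant \chi_R \leqslant 1$, $\chi_R \equiv 1$ on $\{\rho \leqslant R\}$, $\chi_R \equiv 0$ on $\{\rho \geqslant 2R\}$, and $|\nabla \chi_R| \leqslant C \rho^{-1}$ on the annulus $\{R \leqslant \rho \leqslant 2R\}$. Integrating the identity $\chi_R^2 v \, \Delta v = 0$ by parts and applying Cauchy--Schwarz yields
\[
\int_{\mathcal{X}} \chi_R^2 |\nabla v|^2 \, dV \leqslant 4 \int_{\mathcal{X}} |\nabla \chi_R|^2 v^2 \, dV \leqslant C \int_{\{R \leqslant \rho \leqslant 2R\}} v^2 \rho^{-2} \, dV,
\]
and the right-hand side tends to zero as $R \to \infty$ by the integrability of $v^2 \rho^{-2}$. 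Monotone convergence on the left forces $\nabla v \equiv 0$, so $v$ is constant, and since $\int_{\mathcal{X}} \rho^{-2}\, dV \sim \int \rho^{2n-3}\, d\rho$ diverges at infinity, that constant must be zero.

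The only genuine (and minor) obstacle is the bookkeeping of weights: one must pivot to an intermediate weight $\gamma \in (-2n,-n-1)$ whenever the given $\beta$ lies outside the Fredholm window of Corollary \ref{cor:isom}, but this is always possible by the discreteness of $\mathcal{P}$. Beyond that, the argument is a direct combination of the Fredholm theory of Section \ref{sec:fredholm} with a standard cutoff integration-by-parts.
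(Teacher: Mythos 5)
Your proof is correct but takes a genuinely different route. The paper proves the lemma directly by a variational argument: it defines the energy functional $E(v) = \int_{\mathcal{X}} \bigl(\tfrac{1}{2}|\nabla v|^2 + fv\bigr)\,dV$ on $L^2_{1,1-n}(\mathcal{X})$, establishes coercivity via an orbifold version of the weighted Poincar\'e inequality (\cite[Theorem 1.2(ii)]{hein:sobolev}, which is where the hypothesis $\beta < -n-1$ enters: it makes $\|\rho f\|_{L^2}$ finite), applies the direct method of the calculus of variations to obtain a unique minimizer $u$ (which is automatically a weak solution of $\Delta u = f$), and then invokes local elliptic regularity for smoothness. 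That approach yields existence and uniqueness in a single stroke and is logically independent of the Fredholm theory of Section~\ref{sec:fredholm}. You instead recycle the H\"older-space isomorphism of Corollary~\ref{cor:isom}: pivoting to a weight $\gamma \in (-2n,-n-1)$ to obtain a smooth $u \in C^\infty_{\gamma+2}(\mathcal{X})$, checking $L^2_{1,1-n}$-membership by direct decay counting, and then establishing uniqueness by a Caccioppoli cutoff argument. Both routes are valid. A small gain of your version is that it makes the hypothesis $\beta + 2 \notin \mathcal{P}$ visibly redundant: you need not invoke discreteness of $\mathcal{P}$ to choose $\gamma$, since the Remark after Theorem~\ref{thm:fredholm} already gives $\mathcal{P} \cap (-2n+2,0) = \emptyset$, so every $\gamma$ in $(-2n,-2)$ is non-exceptional. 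One point you rely on implicitly and should state: since $L^2_{1,1-n}(\mathcal{X})$ is \emph{defined} as a completion of $C_c^\infty$, you must verify that a smooth function with finite $L^2_{1,1-n}$-norm actually lies in that completion---this does follow from the same cutoffs $\chi_R$ you introduce for uniqueness (the error $\|(1-\chi_R)u\|_{L^2_{1,1-n}} \to 0$ by dominated convergence), but it is worth saying.
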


\begin{proof}
Define a functional 
\[
E(v) = \int_{\mathcal{X}} \left(\frac{1}{2} |\nabla v|^2 + f v\right) \: dV
\]
for all functions $v$ in $L_{1,1-n}^2(\mathcal{X})$. 

We first claim that there are constants $\delta, A > 0$ such that 
\begin{align}\label{eqn:coercive}
E(v) \geqslant \delta \norm{v}_{L_{1,1-n}^2}^2 - A.
\end{align}
Indeed, using H\"older's inequality, we can bound $E(v)$ from below by 
\begin{align*}
E(v) \geqslant \frac{1}{2} \norm{\nabla v}_{L^2}^2 - \norm{\rho^{-1}v}_{L^2} \norm{\rho f}_{L^2},
\end{align*}
where the $L^2$-norm of $\rho f$ is finite because $\beta < -n-1$. 
A geometric mean type of inequality implies that for each $\epsilon > 0$ we have 
\begin{align*}
E(v) \geqslant \frac{1}{2}\norm{\nabla v}_{L^2}^2 - \frac{\epsilon}{2} \norm{\rho^{-1}v}_{L^2}^2 - \frac{1}{2\epsilon} \norm{\rho f}_{L^2}^2.
\end{align*}
An orbifold version of Theorem 1.2(ii) from \cite{hein:sobolev} (with $\alpha = 1$ and $\beta = 2n$) gives a weighted Poincar\'e inequality on $\mathcal{X}$ of the form 
\begin{align}\label{eqn:Poincare}
\norm{\rho^{-1}v}_{L^2}^2 = \norm{v}_{L_{0,1-n}^2}^2 \leqslant C \norm{\nabla v}_{L^2}^2. 
\end{align}
It follows that for $\epsilon$ sufficiently small, we have 
\[
E(v) \geqslant \frac{1}{4}\norm{\nabla v}_{L^2}^2  - \frac{1}{2\epsilon} \norm{\rho f}_{L^2}^2. 
\]
We use \eqref{eqn:Poincare} again to find that 
\[
E(v) \geqslant \frac{1}{8}\norm{\nabla v}_{L^2}^2 + \frac{1}{8C} \norm{\rho^{-1}v}_{L^2}^2 - \frac{1}{2\epsilon} \norm{\rho f}_{L^2}^2,
\]
and hence, for $C$ large enough, we have 
\[
E(v) \geqslant \frac{1}{8C} \norm{v}_{L_{1,1-n}^2}^2 - \frac{1}{2\epsilon}\norm{\rho f}_{L^2}^2. 
\]

By the calculus of variations \cite{vainberg}, the functional $E$ has a unique critical point $u \in L_{1,1-n}^2$ which achieves an absolute minimum of $E$, and moreover $u$ is a weak solution to the equation $\Delta u = f$. It then follows from standard (local) elliptic regularity arguments that $u$ is actually smooth (since $f$ is). 
\end{proof}

\begin{lemma}(c.f. \cite[Proposition 8.3.4]{joyce})
For an asymptotically conical orbifold $(\mathcal{X}, g)$ with radius function $\rho$, we have $\Delta(\rho^{2-2n}) \in C_{\lambda_g - 2n}^\infty(\mathcal{X})$ and 
\[
\int_{\mathcal{X}} \Delta(\rho^{2-2n}) \: dV = (2n- 2) \textnormal{Vol}(\Sigma)
\]
where $\textnormal{Vol}(\Sigma)$ is the volume of the compact manifold $\Sigma$. 
\end{lemma}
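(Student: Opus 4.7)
\emph{Setup and decay rate.} The plan is to work on the conical end $\mathcal{X} \setminus K'$ throughout, since on its precompact complement $\rho^{2-2n}$ is smooth and has bounded Laplacian. On the model cone $(C, g_0)$ of real dimension $m = 2n$, a separation-of-variables computation shows $\Delta_0(r^\alpha) \propto \alpha(\alpha + m - 2) r^{\alpha - 2}$, so $r^{2-2n}$ is exactly $g_0$-harmonic. Writing $\Phi^* g = g_0 + h$ with $h = O(r^{\lambda_g})$ in the sense of Definition \ref{defn:cone}(ii), the difference $\Delta_g - \Delta_0$ applied to a function $f$ expands schematically as a contraction of $(g^{-1} - g_0^{-1}) = O(r^{\lambda_g})$ against $\nabla_0^2 f$ plus a contraction of the Christoffel difference $\Gamma - \Gamma_0 = O(r^{\lambda_g - 1})$ against $\nabla_0 f$. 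For $f = r^{2-2n}$, which satisfies $|\nabla_0^k f|_{g_0} = O(r^{2-2n-k})$, this yields $(\Delta_g - \Delta_0)(r^{2-2n}) = O(r^{\lambda_g - 2n})$, and iterating with higher derivatives of $h$ controls all weighted $C^k$-norms. Hence $\Delta(\rho^{2-2n}) \in C_{\lambda_g - 2n}^\infty(\mathcal{X})$.

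\emph{Boundary-integral reduction.} The decay estimate together with $\lambda_g < 0$ shows that $\Delta(\rho^{2-2n}) = O(\rho^{\lambda_g - 2n})$ is absolutely integrable against $dV \sim \rho^{2n-1} d\rho \wedge dV_\Sigma$ on the end. Choose $R_0$ with $K' \subset T_{R_0} := \rho^{-1}([1,R_0])$, and for $R > R_0$ set $T_R = \rho^{-1}([1,R])$, so that $\partial T_R \cong \{R\} \times \Sigma$ via $\Phi$ and all orbifold points lie in the interior of $T_R$. The divergence theorem on the smooth manifold-with-boundary $T_R$ gives
\[
\int_{T_R} \Delta(\rho^{2-2n}) \, dV_g = \int_{\partial T_R} \partial_{\nu_g}(\rho^{2-2n}) \, dA_g,
\]
and absolute integrability lets us pass to the limit $R \to \infty$ on the left to obtain $\int_\mathcal{X} \Delta(\rho^{2-2n}) \, dV$.

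\emph{Evaluation of the limit.} Computed with the model metric $g_0$, the outward unit normal to $\{r = R\}$ is $\partial_r$, the induced area form is $R^{2n-1} dV_\Sigma$, and $\partial_r(r^{2-2n}) = (2-2n) R^{1-2n}$, so the model boundary integral equals $(2n-2)\textnormal{Vol}(\Sigma)$ (under the sign convention for $\Delta$ that makes the eigenvalues of $\Delta_\Sigma$ entering $\mathcal{P}$ non-negative). Passing from $g_0$ to $\Phi^* g = g_0 + O(r^{\lambda_g})$ perturbs the outward unit normal, the induced area form, and the value of $g(\nabla \rho^{2-2n}, \nu)$ each by a factor of $1 + O(R^{\lambda_g})$, so the total error is $O(R^{\lambda_g}) \to 0$ as $R \to \infty$. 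The main obstacle is precisely this bookkeeping step: one must verify that each of the three substitutions contributes only $O(R^{\lambda_g})$ corrections so that the leading constant $(2n-2)\textnormal{Vol}(\Sigma)$ survives in the limit.
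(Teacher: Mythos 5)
Your proof takes essentially the same route as the paper: show $r^{2-2n}$ is exactly harmonic for the cone metric so that $\Delta(\rho^{2-2n})$ inherits the decay rate $\lambda_g - 2n$ from the asymptotic conical condition, then apply the divergence theorem on $T_R = \rho^{-1}([1,R])$ and evaluate the boundary term as $R \to \infty$ using the model metric. You are somewhat more explicit than the paper about the integrability check and the $O(R^{\lambda_g})$ error bookkeeping in the boundary integral, which the paper simply asserts, but the argument is the same.
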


\begin{proof}
For the statement about the decay rate of $\Delta(\rho^{2-2n})$, we know that $\Delta(r^{2-2n}) = 0$ on the cone $C(\Sigma)$. It follows from the definition of the radius function and the fact that $\mathcal{X}$ is asymptotically conical that $\Delta(\rho^{2-2n})$ belongs to $C_{\lambda_g - 2n}^\infty(\mathcal{X})$. 

Let $S_R$ be the subset of $X$ given by $S_R = \{x \in X : \rho(x) \leqslant R\}$. Then Stokes' Theorem gives that 
\[
\int_{S_R} \Delta(\rho^{2-2n})\: dV = \int_{\partial S_R} [\nabla(\rho^{2-2n}) \cdot \mathbf{n}] \: dV.
\]
For $R$ large enough, the quantity $\nabla(\rho^{2-2n}) \cdot \mathbf{n}$ is approximated by $(2n-2) R^{1-2n}$ and $\text{vol}(\partial S_R)$ is approximated by $R^{2n-1} \text{Vol}(\Sigma)$. Letting $R$ tend to $\infty$ gives the desired result. 
\end{proof}

Let $\mu_1$ be the smallest nonzero eigenvalue of $\Delta_{\Sigma}$, and let $\beta_1^{\pm}$ be the exceptional weights corresponding to this eigenvalue in the sense that 
\begin{align}\label{eqn:beta}
\beta_1^{\pm } = - \frac{2n-2}{2} \pm \sqrt{\frac{(2n-2)^2}{4} + \mu_1}. 
\end{align}

\begin{lemma}\label{lem:laplacedecay}
Suppose $\beta$ satisfies $\beta_1^- < \beta < -2n$, and let $f$ belong to $C_{\beta}^\infty(\mathcal{X})$. 
\begin{enumerate}
\item[(a)] If $\int_{\mathcal{X}} f \: dV = 0$, then the unique solution $u \in C^\infty(\mathcal{X}) \cap L_{1,1-n}^2(\mathcal{X})$ to $\Delta u = f$ belongs to the space $C_{\beta + 2}^\infty(\mathcal{X})$. 
\item[(b)] If $\int_{\mathcal{X}} f \: dV \ne 0$ and $\beta$ satisfies $\lambda_g - 2n < \beta$, then the unique solution $u \in C^\infty(\mathcal{X}) \cap L_{1,1-n}^2(\mathcal{X})$ to $\Delta u = f$ can be written as $u = A \rho^{2-2n} + v$ for a unique number $A$ and a unique function $v \in C_{\beta + 2}^\infty(\mathcal{X})$. Moreover, the number $A$ can be computed explicitly as 
\[
A = \frac{1}{(2n-2)\textnormal{Vol}(\Sigma)} \int_{\mathcal{X}} f \: dV.
\]
\end{enumerate}
\end{lemma}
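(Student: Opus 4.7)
The plan is to reduce both parts to the Fredholm analysis of Section \ref{sec:fredholm}, identify the cokernel of the Laplacian in the relevant weight range as the space of constants, and then invoke the uniqueness in Lemma \ref{lem:laplaceexis} to match the candidate with $u$.

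For the Fredholm set-up, I would examine $\Delta : C_{\beta+2}^{k+2,\alpha}(\mathcal{X}) \to C_\beta^{k,\alpha}(\mathcal{X})$ for $\beta \in (\beta_1^-, -2n)$. Since the only exceptional weights in $(\beta_1^-, 0]$ are $2-2n$ and $0$, the shifted weight $\beta+2$ lies in $(\beta_1^-+2, 2-2n) \subset \mathbb{R}\setminus \mathcal{P}$, so Theorem \ref{thm:fredholm} makes the operator Fredholm and Lemma \ref{lem:inj} makes it injective. Reproducing the duality argument from the proof of Lemma \ref{lem:lapsurj}, its image equals the $L^2$-annihilator of the kernel of the adjoint $\Delta : C_{-\beta-2n}^{k+2,\alpha}(\mathcal{X}) \to C_{-\beta-2n-2}^{k,\alpha}(\mathcal{X})$. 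Since $-\beta-2n \in (0, \beta_1^+)$ and $\beta_1^+$ is the smallest positive element of $\mathcal{P}$, the cone asymptotic expansion underlying Lemma \ref{lem:coneisom} forces any element of this adjoint kernel to be asymptotic to the one-dimensional $\mu = 0$ mode on $\Sigma$, i.e.\ a constant; hence the adjoint kernel is $\mathbb{R}$ and the image of $\Delta$ is exactly $\{f \in C^{k,\alpha}_\beta(\mathcal{X}) : \int_{\mathcal{X}} f\, dV = 0\}$.

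For part (a), the hypothesis $\int_\mathcal{X} f\, dV = 0$ then produces $\tilde u \in C^{k+2,\alpha}_{\beta+2}(\mathcal{X})$ for every $k,\alpha$; these agree by injectivity, giving $\tilde u \in C^\infty_{\beta+2}(\mathcal{X})$. A weight count using $\beta+2 < 2-2n$ and $dV \sim \rho^{2n-1}d\rho \wedge dV_\Sigma$ at infinity shows $\tilde u \in L^2_{1,1-n}(\mathcal{X})$, so Lemma \ref{lem:laplaceexis} (applicable since $\beta < -2n < -n-1$) forces $\tilde u = u$. For part (b), set $A = \tfrac{1}{(2n-2)\mathrm{Vol}(\Sigma)}\int_{\mathcal{X}} f\, dV$; the hypothesis $\lambda_g - 2n < \beta$ and the preceding lemma put $\Delta(\rho^{2-2n}) \in C^\infty_{\lambda_g - 2n} \subseteq C^\infty_\beta$, and by construction $f - A\Delta(\rho^{2-2n})$ has vanishing integral. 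Part (a) yields $v \in C^\infty_{\beta+2}$ with $\Delta v = f - A\Delta(\rho^{2-2n})$, and $u' := A\rho^{2-2n}+v$ then solves $\Delta u' = f$; a short check ($\int_1^\infty \rho^{1-2n}\, d\rho < \infty$ for $n \geq 2$) gives $\rho^{2-2n} \in L^2_{1,1-n}$, so $u' = u$ by Lemma \ref{lem:laplaceexis}. Uniqueness of $A$ and $v$ follows because $\rho^{2-2n} \notin C^\infty_{\beta+2}$ when $\beta+2 < 2-2n$, making the sum $A\rho^{2-2n} + v$ direct.

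The main obstacle is the cokernel identification, i.e.\ verifying that the only harmonic functions on $\mathcal{X}$ of growth slower than $\rho^{\beta_1^+}$ are the constants. This can be achieved either by a separation-of-variables/spectral argument for harmonic functions on the asymptotic cone inherited by $\mathcal{X}$ via Lemma \ref{lem:coneisom}, or by tracking the Fredholm index of \eqref{eqn:laplacian} across the exceptional weight $2-2n$ and invoking the one-dimensionality of the $\mu = 0$ eigenspace of $\Delta_\Sigma$; either route is what promotes the abstract surjectivity of Lemma \ref{lem:lapsurj} into the precise description of the image needed here.
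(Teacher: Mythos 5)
Your proposal follows essentially the same route as the paper: characterize the image of $\Delta$ on the weight-$\beta$ H\"older space via the duality argument from Lemma \ref{lem:lapsurj}, identify the adjoint kernel (for $-\beta-2n \in (0,\beta_1^+)$) as the constants so that the image is the zero-integral subspace, and then reduce part (b) to part (a) by subtracting $A\,\Delta(\rho^{2-2n})$ with $A$ chosen to kill the integral. The only additional content you supply beyond the paper's proof is the explicit verification that $\rho^{2-2n}$ and the H\"older-space solution land in $L^2_{1,1-n}$, which lets you invoke the uniqueness of Lemma \ref{lem:laplaceexis} to match them with $u$; this is a welcome detail that the paper leaves implicit.
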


\begin{proof}
For part (a), the proof of Lemma \ref{lem:lapsurj} states that the range of 
\begin{align}\label{eqn:lapsmooth}
\Delta : C_{\beta+2}^{k+2,\alpha}(\mathcal{X}) \to C_{\beta}^{k,\alpha}(\mathcal{X}) 
\end{align}
is the orthogonal complement of the kernel of
\begin{align}\label{eqn:lapadjoint}
\Delta : C_{-\beta - 2n }^{k+2,\alpha}(\mathcal{X}) \to C_{-\beta - 2n - 2}^{k,\alpha}(\mathcal{X}).
\end{align}
Our assumption on $\beta$ guarantees that $-\beta - 2n$ belongs to the interval $(0, \beta_1^+)$. In this interval, the kernel of \eqref{eqn:lapadjoint} is the one-dimensional subspace spanned by the constant $1$ function.  It follows that the range of \eqref{eqn:lapsmooth} is the subspace $W$ of all $f \in C_{\beta}^{k,\alpha}(\mathcal{X})$ satisfying $\int_{\mathcal{X}} f \: dV = 0$. The restriction 
\[
\Delta : C_{\beta+ 2}^{k,\alpha}(\mathcal{X}) \to W
\]
is an isomorphism, and hence there is an estimate of the form 
\[
\norm{u}_{C_{\beta+2}^{k+2,\alpha}} \leqslant C \norm{f}_{C_{\beta}^{k,\alpha}} \hspace{5mm} \text{for $f = \Delta u$ satisfying $\int_{\mathcal{X}} f \: dV = 0$.}
\]
Claim (a) now follows. 

For part (b), the integral $\int_{\mathcal{X}} \Delta \rho^{2-2n} \: dV$ is finite and equal to $(2n-2) \text{Vol}(\Sigma)$ by the previous lemma. Because $\beta$ satisfies $\beta < -2n$, the integral $\int_{\mathcal{X}} f \: dV$ is also finite. Let $A$ denote the constant 
\[
A =  \frac{\int_{\mathcal{X}} f \: dV}{\int_{\mathcal{X}} \Delta \rho^{2-2n} \: dV} =  \frac{1}{(2n-2)\textnormal{vol}(\Sigma)} \int_{\mathcal{X}} f \: dV. 
\]
Since $\beta$ satisfies $\lambda_g - 2n < \beta$, the function $f - \Delta(A \rho^{2-2n})$ belongs to $C_{\beta}^\infty(\mathcal{X})$ and has zero integral over $\mathcal{X}$. By part (i), there is a unique $v \in C_{\beta + 2}^\infty(\mathcal{X})$ such that $\Delta v = f - \Delta(A \rho^{2-2n})$. Upon rearranging, we find that the proof is complete. 
\end{proof}

\subsection{Case (i)}

Let us now discuss the case (i) of Theorem \ref{thm:main}. In this case, we are assuming that $\beta$ satisfies $ \max\{-4n, \beta_1^-, \lambda_g - 2n\} < \beta < -2n$. The idea is that there is an inclusion $C_{\beta}^\infty(\mathcal{X}) \hookrightarrow C_{\beta'}^\infty(\mathcal{X})$ for $\beta'$ satisfying $\beta < -2n < \beta'$ so that we can use the existence from case (ii), noting, however, that the solution we obtain may not belong to the desired space of functions. Nevertheless, because the solution satisfies a Monge-Amp\'ere equation, we will be able to use Lemma \ref{lem:laplacedecay}(b) to conclude that the solution belongs to the space we want. 

More precisely, for a small number $\epsilon > 0$, there is an inclusion $C_{\beta}^\infty(\mathcal{X}) \hookrightarrow C_{-2n + \epsilon}^\infty(\mathcal{X})$. It therefore follows from case (ii), that there is a unique solution $\varphi$ to \eqref{eqn:MA} satisfying $\varphi \in C_{2 - 2n + \epsilon}^\infty(\mathcal{X})$. By expanding the Monge-Amp\'ere equation \eqref{eqn:MA} and using the relation  
\[
n \sqrt{-1} \ddb \varphi \wedge \omega^{n-1} = (\Delta \varphi) \omega^n,
\]
we find that 
\[
(\Delta \varphi) \omega^n = (1 - e^F) \omega^n  + \sum_{j=2}^n {{n}\choose{j}} (\sqrt{-1} \ddb \varphi)^j \wedge \omega^{n-j}.
\]
The term $(1 - e^F) \omega^n $ belongs to $O(\rho^\beta)$ by assumption on $F$. All of the terms in the summation belong to $O(\rho^{-4n + 2\epsilon})$ because $j \geqslant 2$. It follows that $\Delta \varphi$ belongs to $O(\rho^\beta)$, and hence by 
Lemma \ref{lem:laplacedecay}(b) we find that $\varphi  \in \mathbb{R} \rho^{2-2n} \oplus C_{\beta + 2}^\infty(\mathcal{X})$ as desired.

\subsection{Case (iii)}

We finish by discussing the case (iii). In this case, we are assuming that $\beta$ satisfies $-2 < \beta < 0$ and $\beta + 2 \notin \mathcal{P}$. The idea is to reduce again to the case (ii), using the following lemma. 

\begin{lemma}
Suppose that $\beta$ satisfies $-2 < \beta < 0$ and $\beta + 2 \notin \mathcal{P}$. If $F$ belongs to $C_\beta^\infty(\mathcal{X})$, then there is a function $\varphi_1 \in C_{\beta + 2}^\infty(\mathcal{X})$ satisfying 
\begin{align}\label{eqn:MA2}
\begin{cases}
(\omega + \sqrt{-1} \ddb \varphi_1)^n = e^{F - F_1} \\
\omega + \sqrt{-1} \ddb \varphi_1 > 0
\end{cases}
\end{align}
for some $F_1 \in C_{2\beta}^\infty(\mathcal{X})$. 
\end{lemma}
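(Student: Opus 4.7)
The plan is to construct $\varphi_1$ as a smoothly cut-off version of a solution to the linearized equation $\Delta \varphi = F$. Since $-2 < \beta < 0$ forces $\beta > -2n$, and $\beta + 2 \notin \mathcal{P}$, Lemma \ref{lem:lapsurj} produces $\varphi_1^0 \in C_{\beta+2}^\infty(\mathcal{X})$ with $\Delta \varphi_1^0 = F$, satisfying a weighted estimate $\|\varphi_1^0\|_{C_{\beta+2}^2} \leqslant C\|F\|_{C_\beta^0}$ (from the open mapping theorem applied to the Fredholm surjection mod kernel). This $\varphi_1^0$ is \emph{not} a priori small in $C^2$, so $\omega + \sqrt{-1} \ddb \varphi_1^0$ may fail to be positive on the compact part, which the cutoff below will rescue.

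To fix positivity I introduce a smooth cutoff $\chi_R$ with $\chi_R \equiv 0$ on $\{\rho \leqslant R\}$, $\chi_R \equiv 1$ on $\{\rho \geqslant 2R\}$, and $|\nabla^k \chi_R|_g \leqslant C_k R^{-k}$, and set $\varphi_1 := \chi_R \varphi_1^0$. On $\{\rho \leqslant R\}$ we trivially have $\omega + \sqrt{-1} \ddb \varphi_1 = \omega > 0$. On the support of $\chi_R$, the weighted bound on $\varphi_1^0$ (giving $|\varphi_1^0| \leqslant C\rho^{\beta+2}$, $|\nabla\varphi_1^0| \leqslant C\rho^{\beta+1}$, $|\sqrt{-1}\ddb\varphi_1^0| \leqslant C\rho^\beta$) combines with the scaling of the derivatives of $\chi_R$ to bound each of the four Leibniz terms of $\sqrt{-1}\ddb(\chi_R\varphi_1^0)$ by $C R^\beta$ pointwise, yielding $|\sqrt{-1} \ddb \varphi_1|_g \leqslant C' R^\beta$ uniformly. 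Since $\beta < 0$, choosing $R$ sufficiently large forces $C'R^\beta$ to be small enough that $\omega + \sqrt{-1} \ddb \varphi_1 > 0$ on all of $\mathcal{X}$.

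With positivity in hand, I set $F_1 := F - \log\bigl((\omega + \sqrt{-1} \ddb \varphi_1)^n / \omega^n\bigr)$, so that \eqref{eqn:MA2} holds by construction. Expanding
\[
(\omega + \sqrt{-1} \ddb \varphi_1)^n = \omega^n \bigl(1 + \Delta \varphi_1 + Q(\varphi_1)\bigr), \qquad Q(\varphi_1) := \sum_{j=2}^n \binom{n}{j} \frac{(\sqrt{-1} \ddb \varphi_1)^j \wedge \omega^{n-j}}{\omega^n},
\]
one sees that $Q(\varphi_1) \in C_{2\beta}^\infty$ since each summand carries at least two factors of $\sqrt{-1} \ddb \varphi_1 \in C_\beta^\infty$. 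On $\{\rho \leqslant R\}$ we have $\varphi_1 = 0$, so $F_1 = F$ is smooth on this compact region, and weighted decay there reduces to ordinary boundedness. On $\{\rho \geqslant 2R\}$ we have $\Delta \varphi_1 = F$, so $F_1 = F - \log(1 + F + Q(\varphi_1))$; since both $F$ and $Q$ decay to zero at infinity, the Taylor expansion of $\log(1 + \cdot)$ converges and gives $F_1 = -Q(\varphi_1) + \tfrac{1}{2}(F+Q)^2 - \cdots$, with the $k$-th term lying in $C_{k\beta}^\infty \subset C_{2\beta}^\infty$. On the transition annulus $\{R \leqslant \rho \leqslant 2R\}$ smoothness and boundedness again yield membership in $C_{2\beta}^\infty$.

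The principal obstacle is the positivity clause in \eqref{eqn:MA2}: Lemma \ref{lem:lapsurj} delivers no $C^2$-smallness for the raw $\varphi_1^0$, so $\omega + \sqrt{-1}\ddb\varphi_1^0 > 0$ is not automatic. The cutoff resolves this by trading the exact linearized identity $\Delta \varphi_1 = F$ (which now only holds outside $\{\rho \leqslant 2R\}$) for a uniform $C^2$ bound of order $R^\beta$, which is small for large $R$; the compactly supported discrepancy between $\Delta\varphi_1$ and $F$ on the annulus is harmless because weighted decay is a condition at infinity only, so the verification that $F_1 \in C_{2\beta}^\infty$ proceeds on $\{\rho \geqslant 2R\}$ via the Taylor expansion above, and by smoothness everywhere else.
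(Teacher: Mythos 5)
Your proof is essentially the same as the paper's: both obtain $\hat\varphi_1$ solving $\Delta\hat\varphi_1 = F$ from Lemma~\ref{lem:lapsurj}, multiply by a cutoff $\chi_R$ supported in $\{\rho\geqslant R\}$ (equal to $1$ on $\{\rho\geqslant 2R\}$) to force $|\sqrt{-1}\ddb\varphi_1|=O(R^\beta)$ and hence positivity for $R$ large, and then on $\{\rho\geqslant 2R\}$ expand $(\omega+\sqrt{-1}\ddb\varphi_1)^n/\omega^n = 1 + F + Q$ with $Q\in C_{2\beta}^\infty$ and define $F_1 = F - \log(1+F+Q)$, obtaining $F_1\in C_{2\beta}^\infty$ via $\log(1+u)=u+O(u^2)$.
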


\begin{proof}
Identify $X \setminus K'$ with $(1,\infty) \times \Sigma$. Let  $\eta : \mathbb{R}_+ \to \mathbb{R}_+$ be a smooth function satisfying 
\[
\eta(t) = \begin{cases}
0 & r \leqslant 1 \\
1 & r \geqslant 2
\end{cases}.
\]
For $R \geqslant 1$, let $\eta_R$ be the composition $\eta_R(r) = \eta(r/R)$. 
Since $\Delta : C_{\beta + 2}^\infty(\mathcal{X}) \to C_{\beta}^\infty(\mathcal{X})$ is surjective by Lemma \ref{lem:lapsurj}, there is a function $\hat{\varphi}_1 \in C_{\beta + 2}^\infty(\mathcal{X})$ such that $\Delta \hat{\varphi}_1  =  F$ on $\mathcal{X}$. We claim that $\varphi_1 = \eta_R \hat{\varphi}_1$ has the desired properties for $R$ sufficiently large. 

We first claim that the form $\omega + \sqrt{-1} \ddb \varphi_1$ is a positive form. Indeed, we compute that 
\begin{align*}
\sqrt{-1} \ddb \varphi_1  &= \eta_R \sqrt{-1} \ddb \hat{\varphi}_1 + \sqrt{-1} \frac{\eta'}{R}(\partial \hat{\varphi}_1 \wedge \bar{\partial}r + \partial r \wedge  \bar{\partial} \hat{\varphi}_1) \\
&\;\;\;\;  + \sqrt{-1} \hat{\varphi}_1 \left(\frac{\eta'}{R} \ddb r + \frac{\eta''}{R^2} \partial r \wedge \bar{\partial} r \right). 
\end{align*}
Because $\hat{\varphi}_1 \in C_{\beta + 2}^\infty(\mathcal{X})$ and $\eta_R$ is supported only for $r > R$, we find that the length of the first term $\eta_R \sqrt{-1} \ddb \hat{\varphi}_1$ is $O(R^\beta)$. The derivatives of $\eta$ are only supported for $r \in [R, 2R]$ so that the lengths of the other terms are also $O(R^\beta)$. We conclude that $\sup |\sqrt{-1} \ddb \varphi_1| \to 0$ as $R \to \infty$, and so we can ensure that the form $\omega + \sqrt{-1} \ddb \varphi_1$ is positive for $R$ large enough. 

Now for the complex Monge-Amp\'ere equation, we note that for $r > 2R$, we have $\varphi_1 = \hat{\varphi}_1$ and hence 
\begin{align*}
(\omega + \sqrt{-1} \ddb \varphi_1)^n &= \left(1 + \frac{1}{2} \Delta \hat{\varphi}_1 \right) \omega^n + \sum_{j \geqslant 2} {{n}\choose{j}} (\sqrt{-1} \ddb \hat{\varphi}_1)^j \wedge \omega^{n-j} \\
&= (1 + F + O(r^{2\beta}))\omega^n.
\end{align*}
If we set $F_1 = F - \log(1 + F + O(r^{2\beta}))$, then we have (by the Taylor series for $\log$) that $F_1 = O(r^{2\beta})$. The result follows. 
\end{proof}

Indeed, we may now finish the proof of case (iii) in the following manner. By the previous lemma, we obtain a $\varphi_1 \in C_{\beta + 2}^\infty(\mathcal{X})$ and an $F_1 \in C_{2\beta}^\infty(\mathcal{X})$ satisfying \eqref{eqn:MA2}. In particular, the form $\omega_1 = \omega + \sqrt{-1} \ddb \varphi_1$ is a K\"ahler form. If it happens that $2\beta < -2$, then we can use case (ii) to obtain a solution $\varphi_2 \in C_{2\beta + 2}^\infty(\mathcal{X})$ to the equation $(\omega_1 + \sqrt{-1} \ddb \varphi_2)^n = e^{F_1}\omega_1^n$, and setting $\varphi = \varphi_1 + \varphi_2$, we find that $\varphi$ belongs to $C_{\beta}^\infty(\mathcal{X})$ and that $\varphi$ solves \eqref{eqn:MA}. If $2\beta$ is not smaller than $-2$, then we can use the previous argument to find $\varphi_2 \in C_{2\beta + 2}^\infty(\mathcal{X})$ such that $(\omega_1 + \sqrt{-1} \ddb \varphi_2)^n = e^{F_1 - F_2}\omega_1^n$ for some $F_2 \in C_{4\beta}^\infty(\mathcal{X})$. If $4\beta < -2$, then we can use the previous argument to solve the desired equation \eqref{eqn:MA}. If not, then we can proceed iteratively to solve \eqref{eqn:MA} in a finite number of steps.

\section{Calabi-Yau metrics}\label{sec:calabiyau}

This section is devoted to proving Theorem \ref{thm:calabiyau}, which states the existence of Ricci-flat metrics in certain K\"ahler classes which decay rapidly in the following precise sense. 

\begin{definition}\label{defn:mu}
Let $\mathcal{X}$ be a compact orbifold and $K \subset X$ a compact subset of the underlying space, and let $C = \mathbb{R}_+ \times \Sigma$ be a cone with cone metric $g_0$. Suppose that there is a diffeomorphism $\Phi : (1, \infty) \times \Sigma \to X \setminus K$. A class $\mathfrak{k}$ in $H^2(\mathcal{X})$ is called almost compactly supported of rate $\lambda_{\mathfrak{k}} < 0$ if the class can be represented by a K\"ahler form $\omega$ on $\mathcal{X}$ such that there is a compact set $K' \supset K$ and a real smooth $1$-form $\eta$ on $X \setminus K'$ such that the difference $\omega - d\eta$ decays with rate $\lambda_{\mathfrak{k}}$. 
\end{definition}

Our proof of the theorem follows very closely the proof presented in \cite[Theorem 2.4]{ch13} and relies upon the following lemmas.  The first lemma can be found in \cite{ch13}, and the proof given there still holds in the orbifold setting because the arguments are given outside of the compact subset $K'$ in which our orbifold is isomorphic to a cone. 

\begin{lemma}
With the hypotheses of Theorem \ref{thm:calabiyau}, we have $\Phi^*J - J_0 = O(r^{\lambda_{\Omega}})$ (in the sense of Definition \ref{defn:cone}). 
\end{lemma}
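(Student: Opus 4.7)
The plan is to reconstruct the complex structure algebraically from the holomorphic volume form on the end of the cone, then transport the decay rate from $\Phi^*\Omega - \Omega_0$ to $\Phi^*J - J_0$. The starting observation is algebraic: on any real $2n$-manifold $M$, a complex $n$-form $\Omega$ satisfying $\Omega \wedge \overline{\Omega} \neq 0$ determines a unique almost complex structure $J$ via
\[
T^{0,1}_J M = \ker\bigl(\iota_{\cdot}\Omega \colon TM \otimes \mathbb{C} \to \Lambda^{n-1} T^*M \otimes \mathbb{C}\bigr),
\]
and this assignment $\Omega \mapsto J$ is smooth (indeed real-analytic) in the obvious sense. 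Both $\Phi^*\Omega$ and $\Omega_0$ are nowhere vanishing on the end $(1,\infty) \times \Sigma$ (provided $r$ is large enough, which follows from $\Phi^*\Omega \to \Omega_0$), and they recover $\Phi^*J$ and $J_0$ respectively through this rule.

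To make the correspondence quantitative, I would parametrize almost complex structures close to $J_0$ by writing
\[
T^{0,1}_J = \{v + \phi(v) : v \in T^{0,1}_{J_0}\}
\]
for a bundle map $\phi : T^{0,1}_{J_0} \to T^{1,0}_{J_0}$ depending smoothly on the base point. The map $\phi$ and the difference $J - J_0$ determine each other algebraically by bounded operations, so it suffices to bound $\phi$. Writing $\Phi^*\Omega = \Omega_0 + \varepsilon$ with $\varepsilon = O(r^{\lambda_\Omega})$, the condition that $\Phi^*\Omega$ be of type $(n,0)$ with respect to $\Phi^*J$ reads: for every $v \in T^{0,1}_{J_0}$,
\[
\iota_{\phi(v)} \Omega_0 = -\iota_v \varepsilon - \iota_{\phi(v)} \varepsilon,
\]
using $\iota_v \Omega_0 = 0$. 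Since $\Omega_0 \wedge \overline{\Omega_0} \neq 0$, the operator $w \mapsto \iota_w \Omega_0$ restricted to $T^{1,0}_{J_0}$ is injective onto its image, and this injectivity is uniform on the end because $\Omega_0$ arises from a Sasaki-Einstein cone (so it is $\nabla_0$-parallel up to a unit factor). Consequently, for $r$ large enough this is a uniformly invertible linear system in $\phi(v)$ (via a contraction mapping or implicit function theorem applied to the algebraic map $\phi \mapsto \iota_{\phi(\cdot)}(\Omega_0 + \varepsilon) + \iota_{\cdot}\varepsilon$), producing a unique smooth solution $\phi$ with
\[
|\phi|_{g_0} \leqslant C \, |\varepsilon|_{g_0} = O(r^{\lambda_\Omega}).
\]

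For the derivative bounds required by Definition \ref{defn:cone}(ii), I would differentiate the displayed algebraic equation $k$ times using $\nabla_0$ and proceed inductively. Since $\nabla_0 \Omega_0 = 0$ (or at worst of order $r^{-1}$ times $\Omega_0$, from parallelism of $J_0$ and the conical structure), each application of $\nabla_0$ produces terms involving $\nabla_0^j \varepsilon = O(r^{\lambda_\Omega - j})$ for $j \leqslant k$ and lower-order derivatives of $\phi$, which by the inductive hypothesis already have the correct decay rate $r^{\lambda_\Omega - j}$. The same uniform invertibility as above then yields $|\nabla_0^k \phi|_{g_0} = O(r^{\lambda_\Omega - k})$, whence $\Phi^*J - J_0 = O(r^{\lambda_\Omega})$.

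I expect the main obstacle to be purely notational: tracking the tensor contractions in the algebraic equation for $\phi$ and verifying that the linear operator $w \mapsto \iota_w \Omega_0$ is uniformly invertible on $T^{1,0}_{J_0}$ along the end. The latter follows from compactness of $\Sigma$ together with the homogeneity of $\Omega_0$ under the scaling $r \mapsto \lambda r$ on the cone, so the uniform bounds ultimately reduce to a single pointwise estimate on $\Sigma$. No hard analysis is needed, only linear algebra and the product rule.
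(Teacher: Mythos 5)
Your proof is correct and follows the same algebraic recovery of $J$ from $\Omega$ that underlies the proof in \cite{ch13}, to which the paper defers for this lemma: express $T^{0,1}_{\Phi^*J}$ as a graph over $T^{0,1}_{J_0}$, derive the algebraic equation for the graph map $\phi$ from $\iota_{v+\phi(v)}(\Omega_0 + \varepsilon) = 0$, invert uniformly using parallelism of $\Omega_0$ and scale invariance of the cone, and then obtain derivative decay by differentiating the equation. One small imprecision in your opening premise: for $n \geq 2$, the condition $\Omega \wedge \overline{\Omega} \neq 0$ alone does not guarantee that $\ker(\iota_{\cdot}\Omega)$ is an $n$-dimensional subbundle complementary to its conjugate — one also needs $\Omega$ to be pointwise decomposable — but this is automatic here since $\Phi^*\Omega$ and $\Omega_0$ are $(n,0)$-forms for $\Phi^*J$ and $J_0$ respectively, so the remainder of your argument is unaffected.
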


The second lemma can also be found in \cite{ch13}, and the exact same proof involving cut-off functions extends to the orbifold case with almost no adjustments. 

\begin{lemma}\label{lem:pluri}
With the hypotheses of Theorem \ref{thm:calabiyau}, for each $\alpha > 0$, there is a smooth plurisubharmonic function $h_\alpha$ on $\mathcal{X}$ which is strictly plurisubharmonic and whose pullback to $(1, \infty) \times \Sigma$ agrees with $r^{2\alpha}$ outside of a compact subset $K_\alpha \subset X$. 
\end{lemma}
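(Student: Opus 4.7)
The plan is to produce $h_\alpha$ by combining $r^{2\alpha}$ (which is naturally strictly plurisubharmonic in the asymptotic region) with an auxiliary strictly plurisubharmonic function on the compact core, gluing the two via Richberg's smooth regularized maximum.

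First, I would verify that $r^{2\alpha}$ is strictly plurisubharmonic on the K\"ahler cone $(C, J_0)$ for every $\alpha > 0$. Since $\omega_0 = \frac{\sqrt{-1}}{2}\partial_0\bar\partial_0(r^2)$, the chain rule yields
\[
\sqrt{-1}\partial_0\bar\partial_0(r^{2\alpha}) = 2\alpha r^{2\alpha-2}\omega_0 + \alpha(\alpha-1) r^{2\alpha-4}\sqrt{-1}\,\partial_0(r^2)\wedge \bar\partial_0(r^2),
\]
and diagonalization (the second term is rank one with single eigenvalue $2\alpha(\alpha-1)r^{2\alpha-2}$ in the $\partial_0(r^2)$-direction) shows the complex Hessian has eigenvalues $2\alpha^2 r^{2\alpha-2}$ (in that direction) and $2\alpha r^{2\alpha-2}$ (transverse), all strictly positive. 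By the preceding lemma $\Phi^*J - J_0 = O(r^{\lambda_\Omega})$ with $\lambda_\Omega < 0$, so $\sqrt{-1}\partial_J\bar\partial_J(r^{2\alpha})$ differs from $\sqrt{-1}\partial_0\bar\partial_0(r^{2\alpha})$ by an error of order $r^{2\alpha-2+\lambda_\Omega}$, which is subdominant to the leading $r^{2\alpha-2}\omega_0$. Hence there exists $R_0 > 1$ such that $r^{2\alpha}\circ\Phi^{-1}$ is strictly plurisubharmonic with respect to $J$ on the asymptotic region $U_\infty := \Phi(\{r \geq R_0\}\times\Sigma)$.

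Next I would construct a smooth strictly plurisubharmonic function $\psi$ on an open neighborhood $V$ of the compact set $K' \cup \Phi(\{r \leq R_2\}\times\Sigma)$ for some $R_2 > R_0$. In each orbifold chart $\tilde U_i/G_i$ covering the core, the $G_i$-invariant function $|z_i|^2$ is strictly plurisubharmonic; patching these over a finite cover via a partition of unity (with sufficiently large multipliers, or via a Richberg-type smoothing of the sum) yields such a $\psi$. Then extend $r^{2\alpha}$ smoothly through the compact core to a function $\tilde f$ on $\mathcal{X}$ with $\tilde f \equiv r^{2\alpha}$ on $U_\infty$. Rescale $\psi$ by a positive constant and shift by a real constant so that $\psi > \tilde f$ on the compact set $\Phi(\{r \leq R_0\} \times \Sigma) \cup K'$ while $\psi < \tilde f$ on $\Phi(\{r \geq R_2\} \times \Sigma)$; this is possible because $\psi$ is bounded on the core while $\tilde f = r^{2\alpha} \to \infty$. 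Finally, for $\epsilon > 0$ small, define
\[
h_\alpha := M_\epsilon(\tilde f,\, \psi),
\]
where $M_\epsilon$ is Richberg's smooth regularized maximum. The function $h_\alpha$ is smooth, coincides with $\tilde f = r^{2\alpha}$ on $\{r \geq R_\alpha\}$ for some $R_\alpha > R_2$ (so the compact set $K_\alpha$ may be taken to be the complement of this asymptotic region), and is strictly plurisubharmonic everywhere because, by the choice of $\psi$, the interpolation region $\{|\tilde f - \psi| < \epsilon\}$ lies inside $U_\infty \cap V$, where \emph{both} $\tilde f$ and $\psi$ are strictly plurisubharmonic, so the regularized maximum preserves strict positivity of the complex Hessian.

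The main obstacle will be the construction of the auxiliary strictly plurisubharmonic function $\psi$ in the orbifold setting: while each local chart admits the obvious $G_i$-invariant SPSH function $|z_i|^2$, globally patching these into a smooth strictly plurisubharmonic function on a neighborhood of the compact core requires care, typically via Richberg's smoothing applied to a partition-of-unity sum. Once $\psi$ is in hand, the regularized-maximum gluing is a standard argument that is essentially the same as in the manifold setting of \cite{ch13}, requiring only notational adjustments for the orbifold charts.
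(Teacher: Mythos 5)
Your strategy fails at the construction of the auxiliary function $\psi$, and the failure is fundamental rather than a matter of orbifold bookkeeping. You want $\psi$ to be smooth and \emph{strictly} plurisubharmonic on an open neighborhood $V$ of the compact core $K'$. But in the main examples (and in a typical crepant partial resolution) the compact core contains a positive-dimensional compact complex subvariety --- for instance, for $\mathcal{X} = K_{\mathbb{CP}^{n-1}}$ the zero section $\mathbb{CP}^{n-1}$ lies in $K'$. Restricting a strictly plurisubharmonic $\psi$ to such a subvariety would give a strictly psh function on a compact complex manifold, which is impossible by the maximum principle. No amount of rescaling, partition-of-unity patching, or Richberg smoothing of local $|z_i|^2$ pieces can overcome this: the obstruction is not analytic regularity but complex geometry. (The same obstruction shows the lemma as stated here is slightly overstated: one cannot have $h_\alpha$ strictly psh on \emph{all} of $\mathcal{X}$ either. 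The precise version, as in \cite{ch13}, asks for $h_\alpha$ globally psh and strictly psh only on $X \setminus K_\alpha$, which is also all that the downstream construction of $\hat\omega$ actually uses.)

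The intended argument --- and the reason the paper says the proof ``involving cut-off functions'' from \cite{ch13} carries over with almost no adjustments --- avoids this entirely by gluing $r^{2\alpha}$ to a \emph{constant} rather than to an auxiliary strictly psh function. Your computation that $\sqrt{-1}\partial_0\bar\partial_0\,r^{2\alpha}$ has eigenvalues $2\alpha^2 r^{2\alpha-2}$ and $2\alpha r^{2\alpha-2}$ (so is strictly positive for all $\alpha > 0$) is correct, and so is the observation that $\Phi^*J - J_0 = O(r^{\lambda_\Omega})$ makes $r^{2\alpha}$ strictly psh with respect to $J$ on $\{r > R_0\}$ for $R_0$ large. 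From there, fix $T \gg R_0$ and set $h_\alpha = \tilde{M}_\eta\bigl(r^{2\alpha},\, T^{2\alpha}\bigr)$ on the end, extended by the constant $T^{2\alpha}$ over all of $K'$. Since the constant function is psh and $r^{2\alpha}$ is psh on the transition zone $\{r \approx T\}$ (indeed strictly psh there once $T > R_0$), Demailly's regularized maximum of two psh functions is psh, and $h_\alpha$ is strictly psh (because equal to $r^{2\alpha}$) on $\{r^{2\alpha} > T^{2\alpha} + \eta\}$, i.e.\ outside a compact $K_\alpha$. This gives exactly what the later argument needs, with no claim of strict plurisubharmonicity across the compact core, and the only adjustment for orbifolds is that the transition happens entirely in the cone region away from the singular set, so nothing about orbifold charts enters.
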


The final lemma is a version of the $\ddb$-lemma that holds outside of a compact subset. Again, this lemma follows from the manifold case simply because away from the orbifold points, our orbifold is isomorphic to a manifold, so that the results \cite[Proposition A.2(ii), Corollary A.3(ii)]{ch13} still hold. 

\begin{lemma}\label{lem:ddbar}
Let $\mathcal{X}$ be an AC K\"ahler orbifold with trivial canonical bundle. If $n = \dim_{\mathbb{C}}\mathcal{X} > 2$ and if $\alpha$ is an exact real $(1,1)$-form on $X \setminus K$ for some compact $K$ containing all of the orbifold points, then there is a compact $K' \supset K$ and a smooth function $u$ on $X \setminus K'$ such that $\alpha = \sqrt{-1}\ddb u$ on $X \setminus K'$. 
\end{lemma}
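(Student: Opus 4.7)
The plan is to reduce immediately to the manifold case and invoke the Conlon--Hein results cited in the hint. The crucial observation is that the hypothesis ``$K$ contains all the orbifold points of $\mathcal{X}$'' means that $X \setminus K$ is a smooth complex manifold, with smooth K\"ahler structure, which is moreover asymptotic to the K\"ahler cone $C$ exactly as in the manifold setting. All analytic and complex-geometric arguments on $X \setminus K$ therefore go through without any need to treat orbifold charts; only the compactness properties of the link $\Sigma$ and the AC structure of the metric are used.

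Concretely, I would proceed by the standard $\partial\bar\partial$-trick. Since $\alpha$ is real and exact, write $\alpha = d\beta$ on $X \setminus K$ for some real $1$-form $\beta$, and decompose $\beta = \beta^{1,0} + \overline{\beta^{1,0}}$. Expanding $d\beta = \partial\beta^{1,0} + \bar\partial\beta^{1,0} + \partial\overline{\beta^{1,0}} + \bar\partial\overline{\beta^{1,0}}$ and comparing types with the fact that $\alpha$ has bidegree $(1,1)$, I obtain that $\partial\beta^{1,0} = 0$ and that $\beta^{0,1} := \overline{\beta^{1,0}}$ is a $\bar\partial$-closed $(0,1)$-form on $X \setminus K$.

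The next step is to solve the $\bar\partial$-equation $\bar\partial v = \beta^{0,1}$ on some smaller end $X \setminus K'$ with $K' \supset K$. This is precisely where the results \cite[Proposition A.2(ii), Corollary A.3(ii)]{ch13} are invoked: under the hypotheses that $\mathcal{X}$ has trivial canonical bundle and that $n > 2$, they produce such a smooth $v$ on an end of $\mathcal{X}$, because the relevant end-cohomology of $C(\Sigma)$ in bidegree $(0,1)$ vanishes in this range of dimensions. Granted such $v$, setting $u = 2\operatorname{Im}(v)$ gives a real smooth function on $X \setminus K'$ with
\[
\sqrt{-1}\,\partial\bar\partial u \;=\; \partial\beta^{0,1} + \bar\partial\beta^{1,0} \;=\; d\beta \;=\; \alpha,
\]
as desired.

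The potential obstacle is verifying that the solvability results of \cite{ch13} really do transfer to the orbifold setting. However, as emphasized above, every step of their proof takes place on $X \setminus K'$, which is a bona fide smooth K\"ahler manifold asymptotic to the smooth cone $C(\Sigma)$; the orbifold locus is never seen. Therefore the cited propositions apply verbatim, and no additional analysis on orbifold charts is required. The dimension hypothesis $n > 2$ and the triviality of the canonical bundle are used only insofar as they are used in \cite{ch13} to ensure vanishing of the appropriate asymptotic $\bar\partial$-cohomology.
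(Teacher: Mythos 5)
Your proposal is correct and takes essentially the same route as the paper: since $K$ contains the entire orbifold locus, $X \setminus K$ is a genuine smooth AC K\"ahler manifold asymptotic to the smooth cone $C(\Sigma)$, and so the $\ddb$-lemma at infinity of Conlon--Hein applies verbatim. Your unpacking of the $\ddb$-trick (solve $\bar\partial v = \beta^{0,1}$ on an end, then set $u = 2\operatorname{Im} v$) is a correct sketch of what underlies the cited results, but the paper simply invokes them as a black box after making the same observation that no orbifold points are visible where the argument takes place.
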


We are now in a position to prove Theorem \ref{thm:calabiyau}. The proof of Theorem 2.4 from \cite{ch13} applies almost directly, but we sketch the arguments here for completeness.

\medskip

\noindent \emph{Proof of Theorem \ref{thm:calabiyau}}. We identify $X \setminus K$ with $(1, \infty) \times \Sigma$ via $\Phi$, and we let ourselves work with increasingly large compact subsets $K$ if necessary. By assumption, there is a smooth $1$-form $\eta$ on $X \setminus K$ such that the difference $\omega - d\eta$ decays with rate $\lambda_{\mathfrak{k}}$. By Lemma \ref{lem:ddbar}, there is a smooth function $u$ on $X \setminus K$ such that $d\eta =- \sqrt{-1}\ddb u$. By Lemma \ref{lem:pluri}, for each $\alpha > 0$, there is a smooth plurisubharmonic function $h_\alpha$ on $\mathcal{X}$ which is strictly plurisubharmonic and whose pullback to $(1, \infty) \times \Sigma$ agrees with $r^{2\alpha}$ outside of some compact subset $K_\alpha \subset X$. 

Fix some $\alpha \in (0,1)$. Ensure that the compact set $K$ contains $K_\alpha$ and $K_1$. Let $R$ be a number so large that $K \subset \{r \leqslant R\}$. Fix a cutoff function $\psi$ on $\mathcal{X}$ satisfying 
\[
\psi(x) = \begin{cases}
0 & \rho(x) < 2R \\
1 & \rho(x) > 3R.
\end{cases}
\] 
For a constant $S > 2$, let $\psi_S$ denote the rescaled cutoff function satisfying by 
\[
\psi_S(x) = \begin{cases}
0 & \rho(x) < 2RS \\
1 & \rho(x) > 3RS.
\end{cases}
\]
For a constant $c > 0$ and a constant $C$, let $\hat{\omega}$ be the form 
\[
\hat{\omega} = \omega + \sqrt{-1} \ddb(\psi u) + C \sqrt{-1} \ddb((1-\psi_S) h_\alpha) + c \sqrt{-1} \ddb h_1.
\]
In \cite{ch13}, it is shown that for suitable choices of $S, c,$ and $C$, the form $\hat{\omega}$ is a K\"ahler form on $\mathcal{X}$ in such a way that $(\mathcal{X}, \hat{\omega})$ is asymptotically conical of rate $\lambda < 0$. The K\"ahler form $\hat{\omega}$ has global Ricci potential given by 
\[
\hat{f} = \log\left(\frac{i^{n^2} \Omega \wedge \bar{\Omega}}{(\hat{\omega}/c)^n}\right)
\]
belonging to the space $C_\lambda^\infty(\mathcal{X})$.  We would like to use Theorem \ref{thm:main} to solve the equation 
\[
(\hat{\omega} + \sqrt{-1}\ddb \hat{\varphi})^n = e^{\hat{f}}{\hat{\omega}}^n
\]
for $\hat{\varphi}$, and we would obtain a Ricci-flat metric. Let us consider two cases for $\lambda$: either $\lambda < -2n$ or $-2n < \lambda < 0$. 
\begin{enumerate}
\item[(i)] If $\lambda < -2n$, then by considering $\lambda' \geqslant \lambda$ in the interval $(\max\{-4n, \beta_1^-, \lambda_g - 2n\}, -2n)$ of case (i) of Theorem \ref{thm:main} and the inclusion $C_\lambda^\infty(\mathcal{X}) \hookrightarrow C_{\lambda'}^\infty(\mathcal{X})$, we may view $\hat{f}$ as having decay rate $\lambda'$, and therefore use case (i) of Theorem \ref{thm:main} to obtain a solution $\hat{\varphi} \in \mathbb{R} \rho^{2 - 2n} \oplus C^\infty_{\beta + 2}(\mathcal{X})$ whose corresponding K\"ahler form decays with rate $-2n = \max\{\lambda, -2n\}$. 
\item[(ii)] If $-2n < \lambda < 0$, then we may use either case (ii) or (iii) to find a solution $\hat{\varphi} \in C_{\lambda + 2}^\infty(\mathcal{X})$ whose corresponding K\"ahler form decays with rate $\lambda = \max\{\lambda, -2n\}$. 
\end{enumerate}

\begin{remark}
In \cite[Remark 2.10]{ch13}, it is shown using the Lichnerowicz-Obata Theorem that if $\text{Ric}(g_0) \geqslant 0$, then $\mathcal{P} \cap (0,2) = \mathcal{P} \cap [1,2)$ and moreover that the exceptional weights in the interval $(1,2)$ are associated with the growth rates of plurisubharmonic functions on the cone $C$. This remark justifies the slight difference in the statement of Theorem \ref{thm:calabiyau} from that of \cite[Theorem 2.4]{ch13}.
\end{remark}

\begin{corollary}\label{cor:crepant}
Let $(C(\Sigma), g_0, J_0, \Omega_0)$ be a Calabi-Yau cone of complex dimension $n > 2$, let $p : C \to \Sigma$ denote the radial projection, and let $V$ be the normal affine variety associated to $C$. Let $\pi : \mathcal{X} \to V$ be a crepant partial resolution by an orbifold $\mathcal{X}$, and let $\mathfrak{k} \in H^2(\mathcal{X})$ be a class that contains positive $(1,1)$-forms. Then for each $c > 0$, there is a complete Calabi-Yau metric $g_c$ on $\mathcal{X}$ such that $\omega_c \in \mathfrak{k}$ and 
\begin{align}\label{eqn:crepantconical}
\omega_c - \pi^*{c\omega_0} = O(r^{-2+\delta})
\end{align}
for sufficiently small $\delta$. If $\mathfrak{k} \in H^2_c(\mathcal{X})$, then we have 
\[
\omega_c - \pi^*(c\omega_0) = \textnormal{const} \sqrt{-1} \ddb r^{2-2n} + O(r^{-2n -1 - \epsilon})
\]
for some $\epsilon > 0$. 
\end{corollary}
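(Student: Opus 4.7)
The plan is to verify the hypotheses of Theorem \ref{thm:calabiyau} for the crepant partial resolution $\pi : \mathcal{X} \to V$ and then apply it directly. Two things must be checked: the existence of a suitable holomorphic volume form $\Omega$ with controlled decay, and the almost compactly supported property of the K\"ahler class $\mathfrak{k}$.

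For the volume form, the crepant condition $K_\mathcal{X} = \pi^* K_V$ makes $K_\mathcal{X}$ trivial. The form $\Omega_0$ on the cone extends, by normality of $V$, to a nowhere-vanishing holomorphic volume form $\Omega_V$ on $V$, and then $\Omega := \pi^*\Omega_V$ is a global nowhere-vanishing holomorphic volume form on $\mathcal{X}$. Since $\pi$ is an isomorphism away from its exceptional locus (which is compact in $\mathcal{X}$), taking $\Phi = \pi^{-1}$ on the end gives $\Phi^*\Omega = \Omega_0$ identically outside a compact set, so $\lambda_\Omega$ may be chosen arbitrarily negative.

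For the K\"ahler class, pick any K\"ahler representative $\omega \in \mathfrak{k}$ and study its restriction to the end $X \setminus K \cong (R,\infty) \times \Sigma$. The inclusion $\Sigma \hookrightarrow (R,\infty)\times\Sigma$ is a homotopy equivalence, so $H^2(X\setminus K) \cong H^2(\Sigma)$; choose a closed 2-form $\sigma$ on $\Sigma$ representing the class corresponding to $[\omega|_{X\setminus K}]$, and pull back via the projection $p : (R,\infty)\times\Sigma \to \Sigma$. Since $p^*\sigma$ is cohomologous to $\omega|_{X\setminus K}$, we may write $\omega - p^*\sigma = d\eta$ for a smooth real 1-form $\eta$ on $X\setminus K$. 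By Definition \ref{defn:cone}(ii), the form $p^*\sigma$ (a tensor on $\Sigma$ regarded on the cone) decays with rate $-2$ together with all covariant derivatives, so $\omega - d\eta = O(r^{-2}) = O(r^{-2+\delta})$ for any $\delta > 0$, whence $\mathfrak{k}$ is almost compactly supported of rate $\lambda_\mathfrak{k} = -2+\delta$. With $\lambda = \max\{\lambda_\mathfrak{k}, \lambda_\Omega\} = -2+\delta$, the remark at the end of the proof of Theorem \ref{thm:calabiyau} (via Lichnerowicz--Obata applied to the Sasaki--Einstein link $\Sigma$) yields $\mathcal{P}\cap(0,2)\subset[1,2)$, so any $\delta \in (0,1)$ guarantees $\lambda+2 = \delta \notin \mathcal{P}$. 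Theorem \ref{thm:calabiyau} then produces a Calabi--Yau metric $g_c$ with $\omega_c \in \mathfrak{k}$ satisfying $\Phi^*\omega_c - c\omega_0 = O(r^{\max\{-2n,\lambda\}}) = O(r^{-2+\delta})$ since $n > 2$ implies $-2n < -2 + \delta$; this is \eqref{eqn:crepantconical} upon identifying $\Phi$ with $\pi^{-1}$ on the end.

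For the second claim, when $\mathfrak{k} \in H^2_c(\mathcal{X})$, any K\"ahler form $\omega \in \mathfrak{k}$ differs from a compactly supported representative $\omega'$ of the class by an exact form $d\alpha$, so $\omega - d\alpha = \omega'$ vanishes outside a compact set, and $\lambda_\mathfrak{k}$ may be taken arbitrarily negative. In particular one can arrange $\lambda < -2n$, and the ``moreover'' clause of Theorem \ref{thm:calabiyau} yields the refined asymptotic $\omega_c - \pi^*(c\omega_0) = \textnormal{const}\cdot \sqrt{-1}\ddb r^{2-2n} + O(r^{-2n-1-\epsilon})$. The main obstacle is the cohomological step establishing almost compact supportedness for a general $\mathfrak{k}$ containing positive $(1,1)$-forms, together with confirming the decay of $p^*\sigma$ and its cone-covariant derivatives; everything else amounts to bookkeeping against Theorem \ref{thm:calabiyau}.
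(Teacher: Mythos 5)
Your proof is correct and reaches the same conclusion, but it takes a genuinely more elementary route on the cohomological step. The paper establishes the exact sequence $0 \to H^2_c(\mathcal{X},\mathbb{R}) \to H^2(\mathcal{X},\mathbb{R}) \to H^{1,1}_{\text{pr,b}}(\Sigma)$ by combining the long exact sequence of the pair $(X_1, \Sigma)$ with two inputs: the vanishing $H^1(\Sigma,\mathbb{R})=0$ (from positive Ricci curvature on the Sasaki--Einstein link) and the Bochner-formula identification $H^2(\Sigma) \simeq H^{1,1}_{\text{pr,b}}(\Sigma)$. You bypass both by invoking only the homotopy equivalence $\Sigma \hookrightarrow X\setminus K$ to obtain $H^2(X\setminus K) \cong H^2(\Sigma)$, pulling back \emph{any} closed 2-form $\sigma$ on $\Sigma$ representing $[\omega|_{X\setminus K}]$, and using the built-in $O(r^{-2})$ decay of a pulled-back 2-form on a cone. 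This is enough to verify Definition~\ref{defn:mu} with $\lambda_{\mathfrak{k}}=-2+\delta$, and your treatment of the $\mathcal{P}$-condition via the Lichnerowicz--Obata remark (taking $\delta\in(0,1)$) is exactly what is needed. What your shortcut loses is the identification of $\xi$ as a \emph{primitive basic harmonic} $(1,1)$-form, which the paper does not actually need for the corollary as stated but exploits afterwards (Remark~\ref{rem:surface} and the preceding remark) to sharpen the asymptotic to $\omega_c - \pi^*c\omega_0 = p^*\xi + O(r^{-4})$. Your handling of the compactly supported case ($\mathfrak{k}\in H^2_c$) is also valid and marginally more direct than the paper's appeal to the exact sequence. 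One cosmetic imprecision: the extension of $\Omega_0$ over the cone vertex and the fact that $\pi^*\Omega_V$ is nowhere vanishing on $\mathcal{X}$ rely on the Gorenstein property of $V$ and crepancy of $\pi$ respectively, not merely on normality; this is standard and you do invoke crepancy, but the attribution to ``normality'' is loose.
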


\begin{proof}
We first claim that we have an exact sequence of the form 
\[
0 \to H^2_c(\mathcal{X}, \mathbb{R}) \to H^2(\mathcal{X}, \mathbb{R}) \to H^{1,1}_{\text{pr,b}}(\Sigma).
\]
Indeed, if $X_1 \subset \mathcal{X}$ denotes the suborbifold $X_1 = \{x \in X : \rho(x) \leqslant 1\}$, then we may view $\Sigma$ as the boundary of $X_1$. Considering the pair $(X_1, \Sigma)$, we have a long exact sequence in cohomology of the form 
\[
\cdots \to H^{k-1}(\Sigma, \mathbb{R}) \to  H^k(X_1, \Sigma, \mathbb{R}) \to H^k(X_1, \mathbb{R}) \to H^k(\Sigma, \mathbb{R}) \to \cdots
\]
Using the identifications $H^k(X_1, \mathbb{R}) \simeq H^k(\mathcal{X}, \mathbb{R})$ and $H^k(X_1, \Sigma, \mathbb{R}) \simeq H^k_c(\mathcal{X}, \mathbb{R})$, we obtain a long exact sequence, a portion of which is  
\[
\cdots \to H^{1}(\Sigma, \mathbb{R}) \to  H^2_c(\mathcal{X}, \mathbb{R}) \to H^2(\mathcal{X}, \mathbb{R}) \to H^2(\Sigma, \mathbb{R}) \to \cdots
\] 
In \cite{coevering2010}, it is shown that $H^1(\Sigma, \mathbb{R}) = 0$ (because, for example, we may choose a metric on $\Sigma$ with positive Ricci curvature). Moreover, the Bochner formula (see \cite[Lemma 5.3]{goto}) gives that $H^2(\Sigma)$ can be identified with$H^{1,1}_{\text{pr,b}}(\Sigma)$, the primitive basic $(1,1)$-cohomology group associated with the Sasaki structure on $\Sigma$. The claim now follows.

Let $\omega$ be a closed positive $(1,1)$-form in the class $\mathfrak{k}$.  From exact sequence of the previous paragraph, there is a compact subset $K \subset X$ such that away from $K$, we have  
\[
\omega = d\eta + p^*\xi
\] 
for some real $1$-form $\eta$ and some primitive basic $(1,1)$-form $\xi$ on $\Sigma$.  Noting that $p^*\xi = O(r^{-2})$ shows that we can take $\lambda_{\mathfrak{k}} = -2 + \delta$ (and the fact that $\Omega$ agrees with $\Omega_0$ outside of a compact set implies that $\lambda_{\Omega} = -\infty$). Theorem \ref{thm:calabiyau} now gives the result. 
\end{proof}

\begin{remark}
The arguments in \cite{goto} (see Proof of Theorem 5.1) can actually be used to give a stronger version of Corollary \ref{cor:crepant}, whereby the relation \eqref{eqn:crepantconical} is replaced by the relation 
\[
\omega_c  - \pi^*c \omega_0 = p^*\xi + O(r^{-4})
\]
where $\xi$ is the primitive basic harmonic $(1,1)$-form on $\Sigma$ that represents the restriction of $\kappa$ to $\Sigma$. If $\xi = 0$, or equivalently, if $\mathfrak{k} \in H^2_c(\mathcal{X})$, then we have 
\[
\omega_c - \pi^*(c\omega_0) = \textnormal{const} \sqrt{-1} \ddb r^{2-2n} + O(r^{-2n -1 - \epsilon})
\]
for some $\epsilon > 0$.  
\end{remark}

\begin{remark}\label{rem:surface}
Moreover, the same arguments and method of proof in \cite{goto} (see Proof of Theorem 5.1)  can be used to deal with the surface case ($n = 2$) of Corollary \ref{cor:crepant}. In particular, in this case, the K\"ahler class must belong to $H_c^2(\mathcal{X})$. 
\end{remark}

\section{Examples}\label{sec:examples}

We consider examples of  crepant partial resolutions of Calabi-Yau cones, to which one can apply the results of Corollary \ref{cor:crepant} (and Remark \ref{rem:surface} for the case $n=2$) to obtain Ricci-flat K\"ahler metrics. 

Our first example is that of the canonical bundle over projective space $\mathbb{CP}^{n-1}$, which is actually a manifold, and which is covered by the results in \cite{ch13}. However, we find it useful to review this particular example, as it contains a construction that will be repeated in further examples. 

\begin{example}\label{eg:projective}
Projective space $\mathbb{CP}^{n-1}$ equipped with the Fubini-Study metric is a K\"ahler-Einstein Fano manifold of dimension $n-1$ (with K\"ahler-Einstein constant $n$). The tautological line bundle $\mathcal{O}(-1)$ is a Hermitian-Einstein vector bundle over $\mathbb{CP}^{n-1}$ when equipped with the hermitian metric $h$ induced by viewing $\mathcal{O}(-1)$ as a subbundle of the trivial vector bundle of rank $n$. Let $t$ denote the smooth nonnegative function on the total space $L$ of $\mathcal{O}(-1)$ defined by 
\[
t(\eta) = h_x(\eta, \eta) = |\eta|^2
\]
for $\eta$ a vector in the fiber of $L_x$ over $x \in \mathbb{CP}^{n-1}$. Let $\Sigma \subset L$ denote the corresponding $S^1$-bundle given by $\Sigma = t^{-1}(1)$. Then $\Sigma$ may be identified with the sphere $S^{2n-1}$, viewed as an $S^1$-bundle over $\mathbb{CP}^{n-1}$ by considering the inclusion $S^{2n-1} \hookrightarrow \mathbb{C}^{n} \setminus 0$ followed by the projection onto $\mathbb{CP}^{n-1}$. The group $\mathbb{Z}_n$ of $n$th roots of unity acts freely on $\Sigma$ via the diagonal action of $\mathbb{Z}_n$ on $S^{2n-1} \subset \mathbb{C}^n \setminus 0$. The variety $V$ associated to the cone $C(\Sigma/\mathbb{Z}_n) = C(S^{2n-1}/\mathbb{Z}_n)$ may be identified with the quotient variety $\mathbb{C}^n/\mathbb{Z}_n$, which carries a global holomorphic volume form from that of $\mathbb{C}^n$. There is a crepant resolution
\[
\pi : K_{\mathbb{CP}^{n-1}} \to \mathbb{C}^n/\mathbb{Z}_n,
\]  
which contracts the zero section of $K_{\mathbb{CP}^{n-1}}$ to the singular point of $\mathbb{C}^n/\mathbb{Z}_n$. Calabi \cite{calabi} lifts the K\"ahler metric on $\mathbb{CP}^{n-1}$ to a Sasaki-Einstein metric on the $S^1$-bundle $\Sigma \simeq S^{2n-1}$, so that the cone $C(\Sigma/\mathbb{Z}_n) \simeq C(S^{2n-1}/\mathbb{Z}_n)$ is a Calabi-Yau cone. 

Corollary \ref{cor:crepant} now abstractly proves the existence of a one-parameter family of AC Calabi-Yau metrics on $K_{\mathbb{CP}^{n-1}}$ in each K\"ahler class $\mathfrak{k}$ that contains positive $(1,1)$-forms. In particular, by solving an ODE, Calabi \cite{calabi} explicitly constructs a family of Ricci-flat K\"ahler metrics on the total space of the canonical bundle $K_{\mathbb{CP}^{n-1}} = \mathcal{O}(-n)$, and the classes represented by these metrics are compactly supported. 
\end{example}

The next example we consider is a genearlization of the previous example in the sense that we consider the canonical bundle over \emph{any} K\"ahler-Einstein Fano manifold. Again this example is actually a manifold and is covered by the previous results from \cite{ch13}. 

\begin{example}
More generally, let $(M, g)$ be a K\"ahler-Einstein Fano manifold of dimension $n-1$ with K\"ahler-Einstein constant $k_0$. Let $L$ denote the total space of a maximal root of the canonical bundle (meaning that if $\iota$ is the largest integer that divides $K_M$ in $\text{Pic}(M)$, then $L^\iota = K_M$). The function $(\det g)^{-1}$ describes a hermitian metric on $K_M$ with Hermitian-Einstein constant $k_0$. The corresponding metric on $L$ described by $h = (\det g)^{-1/\iota}$ is also Hermitian-Einstein with constant $\ell = k_0/ \iota$. Let $t$ denote the smooth nonnegative function on the total space of $L$ determined by $h$, and let $\Sigma$ denote the corresponding $S^1$-bundle over $M$ given by $\Sigma = t^{-1}(1) \subset L$. The fiberwise action of $\mathbb{Z}_{\iota}$ on $\Sigma$ is free.  The total space of the canonical bundle $K_M$ is a smooth crepant resolution of the variety associated to the cone $C(\Sigma/\mathbb{Z}_{\iota})$, which enjoys a global holomorphic volume form. Again by lifting the metric on $M$, Calabi \cite{calabi} constructs a Sasaki-Einstein metric on $\Sigma$, and in this way, the cone $C(\Sigma/\mathbb{Z}_\iota)$ enjoys the structure of a Calabi-Yau cone. 

Corollary \ref{cor:crepant} now abstractly proves the existence of a one-parameter family of AC Calabi-Yau metrics on $K_M$ in each K\"ahler class $\mathfrak{k}$ that contains positive $(1,1)$-forms. Again, by solving an ODE, Calabi \cite{calabi} explicitly constructs a family of Ricci-flat K\"ahler metrics on the total space of the canonical bundle $K_M$, and the classes represented by these metrics are compactly supported. 
\end{example}

The next family of examples are orbifolds of the form $[L/\Gamma]$, where $L$ is the tautological line bundle over $\mathbb{CP}^{n-1}$ and $\Gamma$ is a finite subgroup of $SU(n)$ acting upon $L$. Such orbifolds are not just asymptotically conical, but actually asymptotically locally Euclidean, meaning that the cone can be taken to be the one associated with $\mathbb{C}^n/\Gamma$. 

\begin{example}
Let $\Gamma$ be a finite subgroup of $SU(n)$ acting freely on $S^{2n-1} \subset \mathbb{C}^n$. Upon identifying $S^{2n-1}$ with the $S^1$-bundle $\Sigma$ over $\mathbb{CP}^{n-1}$ of Example \ref{eg:projective}, we obtain a free action of $\Gamma$ on $\Sigma$, which we may extend to an action on the total space $L$ of $\mathcal{O}(-1)$. The global quotient orbifold $[L/\Gamma]$ is a crepant partial resolution of the variety $\mathbb{C}^n/\Gamma$ associated to the Calabi-Yau cone $C(\Sigma/\Gamma) = C(S^{2n-1}/\Gamma)$. Corollary \ref{cor:crepant} now ensures the existence of a one-parameter family of AC Calabi-Yau metrics on $[L/\Gamma]$ in each K\"ahler class $\mathfrak{k}$ that contains positive $(1,1)$-forms.
\end{example}

\begin{example}\label{eg:kronheimer}
Let us fix our attention to the variety $X = \mathbb{C}^2/\Gamma$ for a finite subgroup $\Gamma$ of $SU(2)$. The ADE classification gives a one-to-one correspondence between finite subgroups of $SU(2)$ and simply laced Dynkin diagrams of the form $A_n$ for $n \geqslant 1$, $D_n$ for $n \geqslant 4$, $E_6$, $E_7$ and $E_8$. Moreover, if $\pi : \tilde{X} \to X$ denotes the minimal resolution of $X = \mathbb{C}^2/\Gamma$, then the Dynkin diagram is the dual graph of the exceptional set of the resolution, which is a union of $\#\text{Vert}$ copies of $\mathbb{P}^1$, where $\#\text{Vert}$ is the number of vertices in the Dynkin diagram corresponding to $\Gamma$.  Using this correspondence, Kronheimer \cite{kronheimer} constructs ALE hyper-K\"ahler metrics on the minimal resolution $\tilde{X}$.  Remark \ref{rem:surface} implies in addition that any intermediate crepant partial resolution $\underline{X}$ factoring $\pi$
\[
\tilde{X} \to \underline{X} \to X
\] 
admits a $b_2(\underline{X})$-parameter family of AC Calabi-Yau metrics as well. In this case, each K\"ahler class is compactly supported and  
\begin{align*}
b_2(\underline{X}) = \dim H^2_c(\underline{X}).
\end{align*}
Moreover, the inequality 
\[
b_2(\underline{X}) \leqslant b_2(\tilde{X}) = \#\text{Vert}
\]
always holds. 
\end{example}

\begin{example}
One can consider higher rank bundles as well. Let $L$ denote the total space of two copies $\mathcal{O}(-1)^{\oplus 2}$ of the tautological line bundle over $\mathbb{CP}^{1}$, and let $h$ denote the hermitian metric obtained as the product of the metrics induced on each factor separately. If $t$ denotes the corresponding nonnegative smooth function on the total space $L$, then the subset $\Sigma = t^{-1}(1) \subset L$ is an $S^3$-bundle over $\mathbb{CP}^1$. Moreover, one can show that $\Sigma$ is isomorphic to the trivial bundle $S^3 \times \mathbb{CP}^1 \simeq S^3 \times S^2$. The variety $V$ associated to the cone $C(\Sigma) \simeq C(S^3 \times S^2)$ may be identified with the affine variety $V = \{z_1^2 + z_2^2 + z_3^2 + z_3^2 = 0\} \subset \mathbb{C}^4$ considered by \cite{candelas}. There is a crepant resolution 
\[
\pi: L \to V
\]
which contracts the zero section of $L$ to the singular point of $V$. There is a Sasaki-Einstein metric on $S^3 \times S^2$ so that $C(S^3 \times S^2)$ becomes a Calabi-Yau cone. 

Corollary \ref{cor:crepant} now abstractly proves the existence of a one-parameter family of AC Calabi-Yau metrics on $L$ in each K\"ahler class $\mathfrak{k}$ that contains positive $(1,1)$-forms. We note that such K\"ahler classes are not compactly supported because in fact, if $E$ denotes the zero section, which is isomorphic to $\mathbb{CP}^1$, then $H_c^2(L) \simeq H_{2n-2}(E) = 0$. Moreover, since $b_2(L) = 1$, there is at most a one-parameter family of such K\"ahler classes that contain positive $(1,1)$-forms.  Moreover, in \cite{candelas}, an explicit one-parameter family of AC K\"ahler metrics on $L$ is constructed. 
\end{example}

\begin{example}
More generally, if $\Gamma$ is a finite subgroup of $SU(2)$ acting freely on $S^{3} \subset \mathbb{C}^2$, then we obtain a corresponding action of $\Gamma$ on the sphere bundle $\Sigma$, and hence also on the total space $L$ of two copies $\mathcal{O}(-1)^{\oplus 2}$ of the tautological line bundle over $\mathbb{CP}^{1}$. The global quotient orbifold $[L/\Gamma]$ is a crepant partial resolution of the variety associated to the Calabi-Yau cone $C((S^3/\Gamma) \times S^2)$, so Corollary \ref{cor:crepant} abstractly proves the existence of a family of AC Calabi-Yau metrics on $[L/\Gamma]$ in each K\"ahler class that contains $(1,1)$-forms. Moreover, the orbifold $[L/\Gamma]$ may be resolved fully to obtain a smooth resolution $\tilde{Y}$, and in analogy with Example \ref{eg:kronheimer}, each intermediate partial resolution $\underline{Y}$ admits a $b_2(\underline{Y})$-parameter family of AC Calabi-Yau metrics. In this case, each $\underline{Y}$ is a $\mathbb{CP}^1$-fibration of a partial resolution $\underline{X}$ of the variety $X = \mathbb{C}^2/\Gamma$ from Example \ref{eg:kronheimer}, and the second Betti number of $\underline{Y}$ satisfies 
\[
b_2(\underline{Y}) = 1 + \dim H_c^2(\underline{Y}) = 1 + b_2(\underline{X}).
\]
\end{example}

\bibliographystyle{abbrv}
\bibliography{YauACbib}

\begin{thebibliography}{10}

\bibitem{bando}
S.~Bando and R.~Kobayashi.
\newblock Ricci-flat {K}{\"a}hler metrics on affine algebraic manifolds. {II}.
\newblock {\em Mathematische Annalen}, 287(1):175--180, 1990.

\bibitem{borzellino}
J.~E. Borzellino.
\newblock {\em Riemannian geometry of orbifolds}.
\newblock PhD thesis, UCLA, 1992.

\bibitem{calabi}
E.~Calabi.
\newblock M{\'e}triques k{\"a}hl{\'e}riennes et fibr{\'e}s holomorphes.
\newblock In {\em Annales Scientifiques de l'{\'E}cole Normale Sup{\'e}rieure},
  volume~12, pages 269--294. Elsevier, 1979.

\bibitem{candelas}
P.~Candelas and C.~Xenia.
\newblock Comments on conifolds.
\newblock {\em Nuclear Physics B}, 342(1):246--268, 1990.

\bibitem{chaljub}
A.~Chaljub-Simon and Y.~Choquet-Bruhat.
\newblock Probl{\`e}mes elliptiques du second ordre sur une vari{\'e}t{\'e}
  euclidienne {\`a} l'infini.
\newblock In {\em Annales de la Facult{\'e} des sciences de Toulouse:
  Math{\'e}matiques}, volume~1, pages 9--25, 1979.

\bibitem{ch13}
R.~J. Conlon, H.-J. Hein, et~al.
\newblock Asymptotically conical {C}alabi--{Y}au manifolds, {I}.
\newblock {\em Duke Mathematical Journal}, 162(15):2855--2902, 2013.

\bibitem{faulk}
M.~Faulk.
\newblock On {Y}au's theorem for effective orbifolds.
\newblock {\em Expositiones Mathematicae}, 2018.

\bibitem{gk}
A.~Ghigi and J.~Koll{\'a}r.
\newblock K{\"a}hler-{E}instein metrics on orbifolds and {E}instein metrics on
  spheres.
\newblock {\em arXiv preprint math/0507289}, 2005.

\bibitem{goto}
R.~Goto.
\newblock Calabi-{Y}au structures and {E}instein-{S}asakian structures on
  crepant resolutions of isolated singularities.
\newblock {\em Journal of the Mathematical Society of Japan}, 64(3):1005--1052,
  2012.

\bibitem{hein:sobolev}
H.-J. Hein.
\newblock Weighted sobolev inequalities under lower ricci curvature bounds.
\newblock {\em Proceedings of the American Mathematical Society},
  139(8):2943--2955, 2011.

\bibitem{joyce}
D.~D. Joyce.
\newblock {\em Compact manifolds with special holonomy}.
\newblock Oxford University Press on Demand, 2000.

\bibitem{kronheimer}
P.~B. Kronheimer et~al.
\newblock The construction of {ALE} spaces as hyper-{K}{\"a}hler quotients.
\newblock {\em J. Diff. Geom}, 29(3):665--683, 1989.

\bibitem{lm85}
R.~B. Lockhart and R.~C. Mc~Owen.
\newblock Elliptic differential operators on noncompact manifolds.
\newblock {\em Annali della Scuola Normale Superiore di Pisa-Classe di
  Scienze}, 12(3):409--447, 1985.

\bibitem{marshal}
S.~P. Marshal.
\newblock {\em Deformations of special Lagrangian submanifolds}.
\newblock PhD thesis, University of Oxford, 2002.

\bibitem{narasimhan}
R.~Narasimhan.
\newblock {\em Analysis on real and complex manifolds}, volume~35.
\newblock Elsevier, 1985.

\bibitem{rs}
Y.~Rollin and M.~Singer.
\newblock Non-minimal scalar-flat {K}{\"a}hler surfaces and parabolic
  stability.
\newblock {\em Inventiones mathematicae}, 162(2):235--270, 2005.

\bibitem{szekelyhidi14}
G.~Sz{\'e}kelyhidi.
\newblock {\em An Introduction to Extremal Metrics}, volume 152.
\newblock Graduate Studies in Mathematics, Providence, 2014.

\bibitem{tianyau}
G.~Tian and S.~T. Yau.
\newblock Complete {K}{\"a}hler manifolds with zero {R}icci curvature {II}.
\newblock {\em Inventiones Mathematicae}, 106(1):27--60, 1991.

\bibitem{vainberg}
M.~Vainberg.
\newblock Variational method and monotone operator method in the theory of
  nonlinear equations, 1972.

\bibitem{coeveringarxiv}
C.~van Coevering.
\newblock Regularity of asymptotically conical {R}icci-flat {K}{\"a}hler
  metrics.
\newblock {\em arXiv preprint arXiv:0912.3946}, 2009.

\bibitem{coevering2010}
C.~van Coevering.
\newblock Ricci-flat {K}{\"a}hler metrics on crepant resolutions of
  {K}{\"a}hler cones.
\newblock {\em Mathematische Annalen}, 347(3):581--611, Jul 2010.

\bibitem{yau78}
S.-T. Yau.
\newblock On the {R}icci curvature of a compact {K}{\"a}hler manifold and the
  complex {M}onge-{A}mp{\'e}re equation, {I}.
\newblock {\em Comm. Pure Appl. Math.}, 31(3):339--411, 1978.

\end{thebibliography}

\end{document}